\newtheorem{theorem}{Theorem}[section]
\newtheorem{corollary}[theorem]{Corollary}
\newtheorem{lemma}[theorem]{Lemma}
\newtheorem{proposition}[theorem]{Proposition}
\theoremstyle{definition}
\newtheorem{definition}[theorem]{Definition}
\newtheorem{definition*}{Definition}
\theoremstyle{remark}
\newtheorem{remark}[theorem]{Remark}
\numberwithin{equation}{section}
\theoremstyle{remark}
\newtheorem{claim*}[]{Claim}
\newlength{\oldfb}
\newcommand{\boxx}[1]
{\setlength{\oldfb}{\fboxrule}\setlength{\fboxrule}{2pt}\framebox{\parbox{\dimexpr\linewidth-2\fboxsep-2\fboxrule}{#1}}\setlength{\fboxrule}{\oldfb}}
\newcommand{\out}[1]
{#1}
\newcommand{\R}{\mathbb{R}}
\newcommand{\C}{\mathbb{C}}
\newcommand{\Z}{\mathbb{Z}}
\newcommand{\K}{\mathbb{K}}
\newcommand{\End}{\operatorname{End}}
\newcommand{\Y}{\mathcal{Y}}
\newcommand{\A}{\mathcal{A}}
\newcommand{\W}{\mathcal{W}}
\newcommand{\M}{\mathcal{M}}
\newcommand{\Hom}{{\operatorname{Hom}}}
\DeclareMathOperator{\Span}{Span}             
\DeclareMathOperator{\crit}{Crit}             
\DeclareMathOperator{\hol}{hol}             
\DeclareMathOperator{\mon}{mon}             
\DeclareMathOperator{\Ext}{Ext}             
\DeclareMathOperator{\mmod}{mod}             
\DeclareMathOperator{\val}{val}             
\DeclareMathOperator{\ind}{ind}             
\DeclareMathOperator{\Cone}{Cone}             
\DeclareMathOperator{\Tw}{Tw}             
\DeclareMathOperator{\odd}{odd}             
\DeclareMathOperator{\even}{even}             
\DeclareMathOperator{\op}{op}             
\newcommand{\F}{\mathcal{F}}
\begin{document}

\title{Monotone Lagrangians in cotangent bundles of spheres}
\date{\today}
\author{Mohammed Abouzaid}
\address{Department of Mathematics, Columbia University, 2990 Broadway, MC 4406, New York, NY 10027, USA and Department of Mathematics, Stanford University, Building 380, Stanford, California 94305, USA}
\email{abouzaid@math.columbia.edu, abouzaid@stanford.edu}
\author{Lu\'is Diogo}
\address{Department of Mathematics, Uppsala University, Box 480, 751 06 Uppsala, Sweden}
\email{luis.diogo@math.uu.se}

\maketitle

We study the compact monotone Fukaya category of $T^*S^n$, for $n\geq 2$, and show that it is split-generated by two classes of objects: the zero-section $S^n$ (equipped with suitable bounding cochains) and a 1-parameter family of monotone Lagrangian tori $(S^1\times S^{n-1})_\tau$, with monotonicity constants $\tau>0$ 
(equipped with rank 1 unitary local systems). As a consequence, any closed orientable spin monotone Lagrangian (possibly equipped with auxiliary data) with non-trivial Floer cohomology is non-displaceable from either $S^n$ or one of the $(S^1\times S^{n-1})_\tau$. In the case of $T^*S^3$, the monotone Lagrangians $(S^1\times S^2)_\tau$ can be replaced by a family of  monotone tori $T^3_\tau$.

\tableofcontents

\section{Introduction}

An embedded Lagrangian $L$ in a cotangent bundle $(T^*Q,d(pdq))$, is {\em exact} if $pdq|_L = df$ for some function $f: L \to \R$. Arnold's nearby Lagrangian conjecture predicts that if $Q$ and $L$ are closed, then $L$ is Hamiltonian-isotopic to the zero-section $Q \subset T^*Q$. This result is currently known to hold only for a limited list of examples, including $Q = S^2$ \cite{Hind} and $T^2$ \cite{DGI}. 
The work of many authors has also led to a proof that the composition $L \to T^*Q \to Q$ (where the first map is the embedding and the second is projection to the zero-section) is a simple homotopy equivalence \cite{AbouzaidKraghSimple}. 

Very little is known if one drops the requirement of $L$ being exact. We will consider the case of $L$ {\em monotone}, by which we mean that there is a constant $\tau \geq 0$ such that, for every smooth map $u:(D^2 , \partial D^2) \to (T^*Q,L)$,
$$
\int_{D^2} u^*\omega = \tau \cdot \mu(u)
$$
where $\mu(u)$ is the {Maslov index} of $u$. 
Note that we allow the case $\tau=0$, which happens, for instance when $L$ is exact. For some results about monotone Lagrangians in cotangent bundles, see for instance \cite{GadbledCotangent}. 

\begin{remark}
Under suitable conditions, $\tau = 0$ implies that $L$ is exact. This is the case, for example, if the following conditions are both satisfied:
\begin{enumerate}
\item \label{cond:Hurewicz} the map $h^*\colon \Hom(H_2(T^*Q,L),\R) \to \Hom(\pi_2(T^*Q,L),\R)$ is injective ($h^*$ is the dual of the relative Hurewicz homomorphism, see for example \cite{DavisKirk}*{Section 6.17});
\item \label{cond:H1} the map $H^1(T^*Q;\R) \to H^1(L;\R)$ induced by inclusion is trivial.
\end{enumerate}
Recall that there is an isomorphism $U\colon H^2(T^*Q,L;\R) \to  \Hom(H_2(T^*Q,L),\R)$ given by the universal coefficients theorem. Note that if \eqref{cond:Hurewicz} holds, then $h^*\circ U \colon H^2(T^*Q,L;\R) \to \Hom(\pi_2(T^*Q,L),\R)$ is also injective. This is useful when showing that $L$ is exact. 

Condition \eqref{cond:H1} above holds if $Q=S^n$ with $n\geq 2$. The fact that $\R$ is an injective $\Z$-module implies that condition \eqref{cond:Hurewicz} holds if $Q=S^n$ with $n\geq 1$, using the five-lemma (see for example \cite{Hatcher}*{page 129}).
Hence, if $Q=S^n$ with $n\geq 2$, a monotone Lagrangian $L\subset T^*S^n$ is exact iff $\tau=0$. 
\end{remark}

The focus of this paper is on closed monotone Lagrangians in cotangent bundles of spheres $S^n$ (with the standard smooth structure), from the point of view of Floer theory, more specifically using {\em wrapped Floer cohomology}. Unless otherwise specified, we will always assume that $n\geq 2$. Given closed Lagrangians $L,L'$ (possibly equipped with additional data like bounding cochains or local systems) in a symplectic manifold, one can sometimes define their Floer cohomology $HF^*(L,L')$, which is invariant under Hamiltonian perturbations of either $L$ or $L'$. If $HF^*(L,L') \neq 0$, then $L$ is not Hamiltonian-displaceable from $L'$ (which means that $\varphi(L)\cap L' \neq \emptyset$ for every Hamiltonian diffeomorphism $\varphi$) \cite{FloerLagrangian}. Unless we say otherwise, we will take Floer cohomology with coefficients in the Novikov field over $\C$, which is denoted by $\K$ and defined in Section \ref{SS:Coefficients}. 

There is a 1-parameter family of disjoint monotone Lagrangians $(S^1\times S^{n-1})_\tau \subset T^*S^n$, of different monotonicity constants $\tau>0$, whose construction will be reviewed below. These Lagrangians can be equipped with local systems such that their Floer cohomologies are non-trivial. In $T^*S^3$, the same holds for a 1-parameter family of disjoint monotone Lagrangian tori $T^3_\tau$, see \cite{ChanPomerleanoUeda1}. We will review the construction of these tori below as well. We will prove the following result. 

\begin{theorem} \label{T:non-displ}
Take $n\geq 2$ and let $L\subset T^*S^n$ be a closed orientable spin monotone Lagrangian
with a unitary local system of rank 1 for which $HF^*(L,L;\K) \neq 0$. Then, either $HF^*(L,S^n;\K) \neq 0$ (where the zero-section $S^n$ is equipped with a suitable bounding cochain) or there is a $\tau>0$ for which $HF^*(L,(S^1\times S^{n-1})_\tau;\K) \neq 0$ (where $(S^1\times S^{n-1})_\tau$ is equipped with a suitable unitary local system of rank 1). In particular, $L$ is not Hamiltonian-displaceable from either $S^n$ or from $(S^1\times S^{n-1})_\tau$, for some $\tau >0$. 

Furthermore, in $T^*S^3$ we can replace the $(S^1\times S^{2})_\tau$ with the tori $T^3_\tau$. 
\end{theorem}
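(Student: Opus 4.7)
The plan is to deduce this theorem as a formal consequence of the split-generation result announced at the start of the paper (the main theorem, proved in later sections), together with a standard homological-algebra argument and Floer's non-displaceability theorem \cite{FloerLagrangian}.

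First, I would organize all the relevant Lagrangians (closed, orientable, spin, monotone, equipped with the auxiliary data from the statement) into the compact monotone Fukaya category $\F$ of $T^*S^n$ over $\K$. As is standard in the monotone setting, $\F$ decomposes as a direct product $\F = \prod_{(w,\tau)} \F_{w,\tau}$ indexed by the monotonicity constant $\tau \geq 0$ and the scalar $w \in \K$ by which the obstruction $m_0$ acts on the unit. Floer cohomology between objects lying in different summands vanishes, so the given Lagrangian $L$ contributes to a single summand $\F_{w_L,\tau_L}$.

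Second, I would invoke the split-generation theorem to reduce to the finite subset of $\{S^n\} \cup \{(S^1\times S^{n-1})_\tau\}_\tau$ (with appropriate bounding cochains and local systems) lying in $\F_{w_L,\tau_L}$. Then the standard categorical criterion applies: the full subcategory of the split-closed derived completion of $\F_{w_L,\tau_L}$ consisting of objects $X$ with $HF^*(L,X) = 0$ is triangulated and closed under direct summands. If every generator belonged to it, so would $L$ itself, contradicting $HF^*(L,L) \neq 0$. Hence at least one generator $G$ satisfies $HF^*(L,G) \neq 0$, and by construction $G$ is either $S^n$ with a suitable bounding cochain, or some $(S^1\times S^{n-1})_\tau$ with a rank 1 unitary local system and $\tau > 0$. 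Non-displaceability is then Floer's theorem \cite{FloerLagrangian}.

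For the $T^*S^3$ statement, the same argument applies with the tori $T^3_\tau$ substituted for the $(S^1\times S^2)_\tau$, using the corresponding variant of the split-generation theorem. The only serious obstacle is proving split-generation itself, which is the central content of the paper; the deduction above is the standard route from split-generation of a Fukaya category to geometric non-displaceability consequences.
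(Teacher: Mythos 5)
Your proof is essentially the same as the paper's: the paper deduces the theorem in one line from the split-generation statement (Theorem~\ref{generate F}); the triangulated-subcategory argument you spell out is exactly what is implicit in that one line. One small caveat: the paper does not establish, nor use, an a priori decomposition $\F = \prod_{(w,\tau)}\F_{w,\tau}$ by monotonicity constant (only the standard $w$-decomposition, which Lemma~\ref{L:vanishing potential} collapses to $w=0$); the vanishing of $HF$ between generators with different $\tau$'s emerges a posteriori from the module classification ($\Ext^*_{A_\K}(S_\alpha,S_\beta)=0$ for $\alpha\neq\beta$, equation~\eqref{ExtSaSa}), not from a pre-existing orthogonal decomposition. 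Since your core argument --- $L$ is a retract of an iterated cone on finitely many split-generators, so $HF^*(L,G)=0$ for all generators $G$ would force $HF^*(L,L)=0$ --- works verbatim without any such decomposition, this does not affect the correctness of your proof.
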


Our work towards the proof of Theorem \ref{T:non-displ} will also imply the following.

\begin{theorem} \label{T:S1xS2 and T3}
Let $\tau, \tau' > 0$. Then $\tau = \tau'$ iff the Lagrangians $(S^1 \times S^2)_\tau$ and $T^3_{\tau'}$ can be equipped with unitary local systems of rank 1 with respect to which $HF^*((S^1 \times S^2)_\tau,T^3_{\tau'};\K) \neq 0$. In particular, $(S^1 \times S^2)_\tau$ is not Hamiltonian-displaceable from $T^3_\tau$. 
\end{theorem}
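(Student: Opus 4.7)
My plan is to combine the split-generation result of Theorem \ref{T:non-displ} (whose $n=3$ incarnation says that $T^3_\tau$ may be used in place of $(S^1\times S^2)_\tau$ as split-generators of the compact monotone Fukaya category of $T^*S^3$, together with the zero-section $S^3$) with the sheet decomposition of the monotone Fukaya category over $\K$: this category splits as a direct sum indexed by the value of the curvature $m_0 \in \K$, and morphisms between objects in different sheets are acyclic. A monotone Lagrangian $L$ of monotonicity constant $\tau > 0$ equipped with a rank-one unitary local system $\rho$ contributes to the sheet indexed by $W_L(\rho)\cdot T^\tau \in \K$, where $W_L(\rho) \in \C$ is the value of the superpotential, and for $L$ to be a non-zero object of the Fukaya category, $\rho$ must be a critical point of $W_L$. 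I will take as input from the superpotential computations elsewhere in the paper that the critical values of both $(S^1\times S^2)_\tau$ and $T^3_\tau$ are non-zero for every $\tau > 0$.

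The \emph{only if} direction then reduces to a sheet-matching argument: if $HF^*((S^1\times S^2)_\tau, T^3_{\tau'};\K) \neq 0$, then both objects are non-zero in the Fukaya category and lie in a common sheet, so $c\cdot T^\tau = c'\cdot T^{\tau'}$ in $\K$ for non-zero $c, c' \in \C$, which forces $\tau = \tau'$.

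For the \emph{if} direction, assume $\tau = \tau'$ and fix a local system $\rho$ on $T^3_\tau$ for which $HF^*(T^3_\tau, T^3_\tau;\K) \neq 0$ (such $\rho$ is produced by the superpotential computation). Then $T^3_\tau$ is a non-zero object lying in the sheet indexed by some nonzero $c\,T^\tau$. Using the split-generation by $\{S^3\}\cup\{(S^1\times S^2)_{\tau''}\}_{\tau''>0}$, there must exist a generator $G$ in this set with $HF^*(T^3_\tau, G;\K) \neq 0$; otherwise $T^3_\tau$ would be a zero object of the split-closure, contradicting the non-vanishing of its self-Floer cohomology. Since $m_0(S^3) = 0$ (as $S^3$ is exact) while $m_0(T^3_\tau, \rho)$ is a non-zero element of $\C\cdot T^\tau$, the generator $G$ cannot be $S^3$; hence $G = (S^1\times S^2)_{\tau''}$ for some $\tau''$ and some local system, and the sheet-matching constraint then forces $\tau'' = \tau = \tau'$. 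The main obstacle to be resolved before applying this argument is the explicit verification that the critical values of both superpotentials are non-zero, which is a purely computational step building on the superpotential calculations performed in the body of the paper.
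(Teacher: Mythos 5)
There is a fatal gap in your proposal: the whole argument rests on the claim that the critical values of the disk potentials of $(S^1\times S^2)_\tau$ and $T^3_\tau$ are non-zero, but the paper proves the opposite. Lemma~\ref{L:vanishing potential} states that a compact monotone Lagrangian with unitary rank-one local system $\xi$ in $T^*S^n$ has non-trivial self-Floer cohomology only if $W_L(\xi) = 0$; this is forced because $m_0$ must be an eigenvalue of quantum multiplication by $c_1$, and $c_1(T^*S^n) = 0$. Remark~\ref{R:crit pts of potentials} verifies this explicitly for the two families you need: $W_{L_C} = x_1(1+x_2)^2$ has critical locus $\{x_2 = -1\}$, on which it vanishes, and $W_{T_C} = x_1(1+x_2)(1+x_3)$ has critical locus $\{x_2 = x_3 = -1\}$, on which it also vanishes. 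So all the non-trivial objects in the monotone Fukaya category of $T^*S^3$ — the zero-section with its bounding cochains, the $(S^1\times S^2)_\tau$, and the $T^3_\tau$ — sit in the single sheet $m_0 = 0$. The sheet decomposition gives you no information whatsoever here, and in particular it cannot distinguish different values of $\tau$, nor can it separate the tori from $S^3$ as you need in your ``if'' direction.

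The paper's actual proof instead works through the Yoneda functor $Y$ into $\mmod(A_\K)$ with $A_\K = \K[u]$, which Proposition~\ref{C:Yoneda ff} shows is cohomologically fully faithful. Lemma~\ref{Ltau is cone} and Corollary~\ref{u act on Ltau} compute that $(S^1\times S^2)_\tau$ with local system of holonomy $U$ around the base circle maps to $S_{\sqrt{\alpha}} \oplus S_{-\sqrt{\alpha}}$ with $\sqrt{\alpha} = T^{-2\tau}(\sqrt{U})^{-1}$, while Lemma~\ref{HF(N,T)} shows that $T^3_{\tau'}$ with a suitable local system maps to $S_{\alpha'} \oplus S_{\alpha'}[1]$ with $\alpha' = T^{-2\tau'}(U')^{-1}$. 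Since $\Ext_{A_\K}^*(S_\beta, S_{\beta'})$ is non-zero exactly when $\beta = \beta'$, one obtains a non-trivial Floer group between the two Lagrangians (for some choice of local systems) precisely when the valuations $-2\tau$ and $-2\tau'$ agree, i.e.\ when $\tau = \tau'$. The quantity separating different $\tau$'s is thus not the curvature $m_0$ (which is identically zero) but the valuation of the eigenvalue of the $u$-action on the associated module over the cotangent fiber. Your key computational input simply does not hold, and without it neither direction of the equivalence goes through.
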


We now describe the structure of the proof of Theorem \ref{T:non-displ}. The Lagrangians $L$ in the statement give objects in a {monotone wrapped Fukaya} category of $T^*S^n$, which also includes a cotangent fiber $F = T^*_q S^n$ (for some $q\in S^n$). This is an $A_\infty$-category (with only a $\Z/2\Z$-grading, since we allow monotone Lagrangians), which we denote temporarily by $\W$ (and will refer to it as $\W^{\Z/2\Z}_{\mon}(Y;\K)$ 
in Section \ref{S:W}). 

The category $\W$ is split-generated by the cotangent fiber $F$, as can be proven by adapting a result in \cite{AbouzaidCotangentFiber} (which in its original form was for the wrapped Fukaya category of exact Lagrangians). Let us consider some algebraic consequences of this split-generation result. Let $A_\K := HW^*(F,F;\K)$ be the wrapped Floer cohomology algebra of $F$. The graded algebra $A_\K$ is isomorphic to $H_{-*}(\Omega_q S^n;\K)$, where $q\in S^n$ is a basepoint and $\Omega_q$ denotes the based loop space, see \cite{AbouzaidBasedLoops}. Hence, since $n\geq 2$, $A_\K$ is isomorphic to a polynomial algebra $\K[u]$, where $\deg(u) = 1-n$. There is a {\em Yoneda functor}
\begin{align*}
Y : \W &\to \mmod(A_\K) \\
L & \mapsto HF^*(F,L;\K)
\end{align*}
where $\mmod(A_\K)$ is the category of $\Z/2\Z$-graded right $A_\K$-modules., with the morphism space between two objects $M,M'$ in $\mmod(A_\K)$ being $\Ext_{A_\K}^*(M,M')$.

The split-generation result mentioned above, together with formality results for $A_\infty$-modules over $A_\K$ that we prove in Section \ref{S:Formality algebra}, imply that $Y$ is a cohomologically full and faithful functor, which means that it induces isomorphisms on cohomology
$$
HW^*(L,L';\K) \cong \Ext_{A_\K}^*(Y(L),Y(L'))
$$   
for any pair of objects $L,L'$. See Proposition \ref{C:Yoneda ff} below for more details. Take the subcategory $\F \subset \W$ (denoted as $\F^{\Z/2\Z}_{\mon}(Y;\K)$ in Section \ref{S:W}) that does not include the object $F$, but only compact monotone Lagrangians. Given an object $L$ of this subcategory (where we suppress the additional data of local systems or bounding cochains), $HW^*(F,L;\K) = HF^*(F,L;\K)$ is a finite dimensional $\K$-vector space, so $Y$ restricts to a cohomologically full and faithful embedding 
\begin{equation} \label{def:Y_c}
Y_c : \F \to \mmod_{pr}(A_\K)
\end{equation}
into the subcategory $\mmod_{pr}(A_\K) \subset \mmod(A_\K)$ of proper $A_\K$-modules $M$ (those which are finite dimensional over $A_\K$).  The approach of this paper will be to study the category $\F$ by analyzing the algebraic category $\mmod_{pr}(A_\K)$. 

Corollaries \ref{S generate} and \ref{S tilde generate} below give generators for $\mmod_{pr}(A_\K)$, and imply the following result (which will appear below as Corollaries \ref{generate Fuk Sn} and \ref{generate Fuk}). 

\begin{theorem} \label{generate F}
Given $n\geq 2$, the functor $Y_c$ in \eqref{def:Y_c}, when extended to the split-closure of $\mathcal F$ (which is the monotone Fukaya category of $T^*S^n$), is a quasi-equivalence of categories. 
The category $\F$ is split-generated by the uncountable collection of objects consisting of $S^n$ (equipped with suitable bounding cochains) and the $(S^1\times S^{n-1})_\tau$ (equipped with unitary local systems of rank 1). In the case of $T^*S^3$, we can replace the $(S^1\times S^{2})_\tau$ with the tori $T^3_\tau$. 
\end{theorem}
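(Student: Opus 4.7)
The strategy is to transport the generation question through $Y_c$ to the algebraic side. Because $Y_c$ is cohomologically full and faithful, the canonical extension to twisted complexes and then to split-closures, $Y_c^\pi : \F^\pi \to \mmod_{pr}(A_\K)^\pi$, is automatically a quasi-equivalence onto its essential image. Hence proving the first statement of the theorem reduces to showing that the images under $Y_c$ of the listed Lagrangians split-generate $\mmod_{pr}(A_\K)$; the split-generation of $\F$ itself is then recovered by pulling back along the quasi-equivalence.

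By Corollary \ref{S generate}, $\mmod_{pr}(A_\K)$ is split-generated by an explicit family of modules. Since $A_\K \cong \K[u]$, every proper module is finite-dimensional and hence $u$-torsion, and up to split summands it decomposes into the one-dimensional simple modules $\K[u]/(u-\alpha)$ for $\alpha \in \K$. The proof therefore reduces to identifying, for each $\alpha \in \K$, a Lagrangian in the prescribed list whose image under $Y_c$ is isomorphic to $\K[u]/(u - \alpha)$. I expect the zero-section $S^n$, equipped with its bounding cochains, to realize $\K[u]/(u)$: the wrapped intersection with a cotangent fiber $F$ is one-dimensional over $\K$ (giving the known computation of $HF^*(F, S^n;\K)$), and the action of $u$ vanishes for geometric reasons coming from the exactness of the zero-section. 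For the remaining simple modules, I expect $HF^*(F, (S^1 \times S^{n-1})_\tau;\K)$, with an appropriately chosen rank-one unitary local system $\rho$, to be one-dimensional over $\K$, with $u$ acting as multiplication by a nonzero scalar $\alpha(\tau, \rho) \in \K^*$; geometrically, $\alpha(\tau,\rho)$ should be identified with the critical value of the Landau--Ginzburg superpotential associated to the monotone pair $((S^1 \times S^{n-1})_\tau, \rho)$. Surjectivity of $(\tau, \rho) \mapsto \alpha(\tau, \rho)$ onto $\K^*$, together with the case of $S^n$, then exhausts all simple modules.

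The analogous step in the case of $T^*S^3$ is to perform the same computation for the tori $T^3_\tau$ with their rank-one local systems and show that the resulting eigenvalues also cover $\K^*$. The principal obstacle in either case is precisely the disk-counting computation that pins down and controls $\alpha(\tau, \rho)$: one must use monotonicity to restrict the relevant moduli spaces to Maslov-index-two disks, evaluate their contributions in terms of the Novikov weights determined by $\tau$ together with the holonomy of $\rho$, and verify that this combined two-parameter family surjects onto $\K^*$. Once surjectivity is established, combining it with the algebraic split-generation of Corollary \ref{S generate} and the cohomological fullness of $Y_c$ yields both the quasi-equivalence of split-closures and the split-generation statement.
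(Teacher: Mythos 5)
Your high-level strategy — push the problem across $Y_c$ to $\mmod_{pr}(A_\K)$ and identify the Yoneda modules of the listed Lagrangians with generators — is exactly right and matches the paper's framework. However, several concrete steps are off in ways that would break the argument.

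First, proper $\K[u]$-modules do \emph{not} decompose into one-dimensional simples $S_\alpha = \K[u]/(u-\alpha)$ up to split summands: a Jordan block $M^k_\alpha$ of size $k\ge 2$ is indecomposable and is not a direct sum of $S_\alpha$'s. The paper's Lemma \ref{L:triangulated closure} instead realizes $M^k_\alpha$ as an \emph{iterated cone} on the $S_\alpha$'s, which places it in the triangulated (hence split) closure; you need this extra step, not a splitting.

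Second, and more seriously, the valuations are misassigned on both sides. You expect $S^n$ to realize only $S_0 = \K[u]/(u)$, with the $u$-action vanishing ``for geometric reasons coming from exactness.'' The paper's Lemma \ref{HF(F,Sn) odd} shows the opposite: the bounding cochain $\alpha[pt]$ deforms the module structure so that $u$ acts on $HF^*(F,(S^n,\alpha[pt]);\K)$ by the scalar $\pm\alpha$, and this realizes $S_\alpha$ for \emph{every} $\alpha$ with $\val(\alpha)\ge 0$. Correspondingly, the tori $(S^1\times S^{n-1})_\tau$ cannot carry the whole of $\K^*$: since $\tau>0$, the local systems yield eigenvalues with $\val = -(n-1)\tau < 0$ only. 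The division of labor is therefore by valuation — zero-section handles $\val\ge 0$, the tori handle $\val<0$ — not ``zero section for $0$, tori for everything else.''

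Third, the rank is wrong. Lemma \ref{Ltau is cone} identifies $L_\alpha$ with $\Cone(u^2-\alpha)$, so $HF^*(F,L_\alpha;\K)$ is rank $2$, with $u$ acting as the companion matrix of $u^2-\alpha$; over the algebraically closed $\K$ this is $S_{\sqrt\alpha}\oplus S_{-\sqrt\alpha}$. A one-dimensional module would require a cone on a linear polynomial, which does not match the geometry (the curve $C$ encircles both critical values, so it is a ``double'' surgery).

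Finally, you do not address the parity of $n$. When $n$ is even, $u$ has odd degree and the one-dimensional modules $S_\alpha$ are not $\Z/2\Z$-graded objects; one must use the rank-$2$ modules $\tilde S_\alpha$ on $\K\oplus\K[1]$ and correspondingly equip $S^n$ with the rank-$2$ graded bundle $E = \K\oplus\K[1]$ and bounding cochains $b_{\alpha,\beta}$, as in Lemma \ref{HF(F,Sn) even} and Corollary \ref{S tilde generate}. Your proposal needs to be bifurcated accordingly before it can close.
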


\begin{proof}[Proof of Theorem \ref{T:non-displ}] 
Given $L$ such that $HF^*(L,L;\K) \neq 0$, $L$ is a non-trivial object in $\F$. Theorem \ref{generate F} then implies that $HF^*(L,L';\K) \neq 0$, where $L'$ is one of the split-generators. 
\end{proof}

\begin{remark}[Relation to mirror symmetry]

As mentioned, the tori $T^3_\tau$ were studied in \cite{ChanPomerleanoUeda1}. They are fibers of an SYZ fibration in the complement of an anticanonical divisor $H$ in $T^*S^3$ ($H$ is anticanonical in the sense that the Lagrangian tori in the SYZ fibration have vanishing Maslov class in the complement of $H$). In this setting, the authors compute the disk potentials associated to SYZ-fibers by studying wall-crossing for pseudoholomorphic disks. This information is used to construct a Landau--Ginzburg model that is mirror to $T^*S^3$. The critical locus of the Landau--Ginzburg potential is an affine line. If the mirror is constructed over the Novikov field, then the points in this critical line with negative valuation correspond to (split summands of) the monotone Lagrangians $T^3_\tau$, equipped with suitable unitary local systems of rank 1. The points with non-negative valuation correspond to bounding cochains on the zero section $S^3$. 
\end{remark}

\begin{remark}[Relation to abstract flux] \label{rmk:flux}

The monotone Lagrangians $(S^1\times S^{n-1})_\tau$ can be obtained geometrically as follows. Let $f:S^n \to \R$ be a Morse function with exactly two critical points. The graph of $df$ intersects the zero section of $T^*S^n$ transversely in the two critical points, and one can perform surgery on this transverse intersection
to produce the family $(S^1\times S^{n-1})_\tau$. Similarly, the tori $T^3_\tau$ can be obtained by taking a Morse--Bott function $g:S^3 \to \R$ whose critical locus is a Hopf link, and performing surgery in $T^*S^3$ on the clean intersection of the zero section and the graph of $dg$. 

Recall that given a compact manifold $Q$ and a class $\alpha \in H^1(Q;\R)$, one can take the {\em flux deformation} of the zero-section of $T^*Q$ in the direction of $\alpha$, by flowing $Q$ along a symplectic vector field $X$ such that $[\omega(.,X)] = i^*\alpha$ (where $i:Q\to T^*Q$ is the inclusion). Using the Weinstein tubular neighborhood theorem, one can similarly deform a compact Lagrangian $L$ in a symplectic manifold $(M,\omega)$ along a class $\alpha \in H^1(L;\R)$. 
Motivated by \cite{SeidelAbstractFlux}, one can think of the family of Lagrangians $(S^1\times S^{n-1})_\tau$ (respectively, $T^3_\tau$) as an {\em abstract flux deformation} of two copies of the zero section $S^n$ (respectively, $S^3$) in the direction of a class $\beta\in H^n(S^n;\R)$ (respectively, $H^3(S^3;\R)$), if $n$ is odd. The case of $n$ even is more subtle, as we will see.  
\end{remark}

This paper is organized as follows. In Section \ref{S:construct Lagrangians}, we present the construction of the monotone Lagrangians $(S^1 \times S^{n-1})_\tau$ in $T^*S^n$ and $T^3_\tau$ in $T^*S^3$. In Section \ref{S:Fukaya categories}, we recall the definitions of several versions of Fukaya categories of $T^*S^n$, including a monotone wrapped Fukaya category where Lagrangians are allowed to intersect cleanly. In Section \ref{S:HF computations}, we perform several Floer cohomology computations, with a view towards proving Theorem \ref{generate F}. The remaining sections have a more algebraic nature, and are about $A_\infty$-algebras and $A_\infty$-modules. In Section \ref{S:Formality algebra}, we establish formality results for a category of modules associated to a cotangent fiber in $T^*S^n$. In Section \ref{S:Generation modules}, we obtain generators for that category of modules.

\subsection*{Acknowledgements} 
The first named author was supported by the Simons Foundation through its ``Homological Mirror Symmetry'' Collaboration grant SIMONS 385571, and by NSF grants DMS-1609148, and DMS-1564172.

The second named author thanks Yank{\i} Lekili, Maksim Maydanskiy and Daniel Pomerleano for helpful conversations. 
The authors thank an anonymous referee for helpful comments.

\section{Monotone Lagrangians in $T^*S^n$}

\label{S:construct Lagrangians}

\subsection{Lagrangians in $T^*S^n$} \label{SS:Lagrs in T*Sn}
Recall that $T^*S^n$ is symplectomorphic to the complex affine quadric 
$$
X_n = \{(z_0, \ldots, z_{n}) \in \C^{n+1} \,|\, z_0^2 + \ldots + z_{n}^2  = 1 \},
$$
equipped with the K\"ahler form $\omega$ obtained from the restriction of $\frac{i}{2}\sum_{j=0}^{n} d z_j \wedge d \overline{z_j}$ on $\C^{n+1}$ \cite{McDuffSalamonIntro}*{Exercise 6.20}. Assume that $n\geq 2$. The projection to the first coordinate defines a Lefschetz fibration 
\begin{align*}
\pi_n\colon X_n &\to \C \\
(z_0, \ldots, z_{n}) &\mapsto z_0
\end{align*}
with critical values $\pm 1$. For every regular value $p\neq \pm 1$, the fiber $\pi_n^{-1}(p)$ is symplectomorphic to $T^*S^{n-1}$, and contains the Lagrangian sphere 
$$
V_{p}:= \{(p,\sqrt{1-p^2} \, x_1, \ldots, \sqrt{1-p^2} \, x_{n}) \in X_n \, | \, (x_1, \ldots, x_{n}) \in S^{n-1}\},
$$
where $S^{n-1}\subset \R^{n}$ is the unit sphere and $\sqrt{1-p^2}$ is one of the two square roots of $1-p^2$. Write also $V_{\pm1} = \{(\pm1,0,\ldots,0)\}$.

\begin{figure}
  \begin{center}
    \def\svgwidth{0.5\textwidth}
    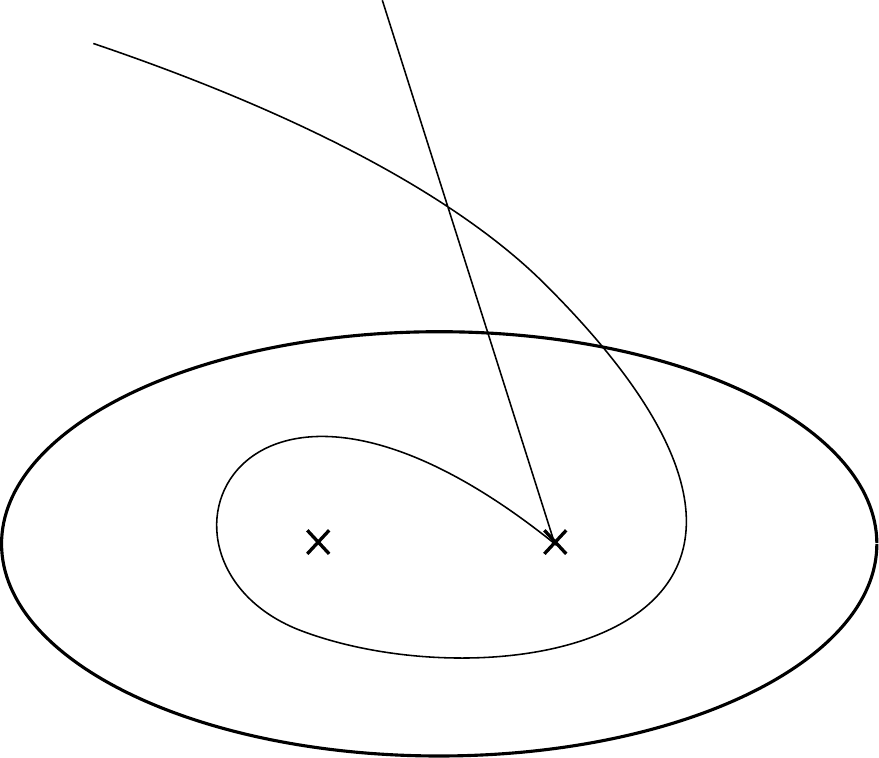
  \end{center}
  \caption{Some curves in $\C\setminus \{\pm 1\}$}
  \label{F_w_fig}
\end{figure}

We will be interested in the following types of Lagrangians that project to curves under $\pi_n$. See Figure \ref{F_w_fig} for relevant examples of such curves. 

\begin{definition}\label{D:Lagr F}
 Given a curve $C\subset \C\setminus \{-1, 1\}$ that is the image of an embedding of $S^1$, let 
 $$
 L_C := \bigcup_{z\in C} V_{z}.
 $$

 Given an embedding $\eta: [0,\infty) \to \C$ such that 
 \begin{itemize}
  \item $\eta(0) \in \{-1,1\}$,
  \item $\eta\big((0,\infty)\big) \subset \C\setminus \{-1,1\}$ and
  \item $\eta(t) = at+b$ for some $a\in \C^*$, $b\in \C$ and $t$ large enough, let
 \end{itemize}
\begin{equation*}
F_\eta :=  \bigcup_{t\geq 0} V_{\eta(t)}.
\end{equation*}
\end{definition}

\begin{lemma} \label{L:L_C and F_eta}
The subsets $L_C$ and $F_\eta$ of  $X_n$ in Definition \ref{D:Lagr F} are Lagrangian submanifolds. If $C$ encloses both points $\pm 1$, then $L_C$ is diffeomorphic to $S^1\times S^{n-1}$, while
$F_\eta$ is Hamiltonian-isotopic to a cotangent fiber in $T^*S^n$. 
\end{lemma}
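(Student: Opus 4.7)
The plan is to verify the three claims in turn: that $L_C$ and $F_\eta$ are Lagrangian submanifolds (including smoothness of $F_\eta$ at its critical endpoint), the diffeomorphism type of $L_C$, and the Hamiltonian isotopy of $F_\eta$ to a cotangent fiber. To establish Lagrangianity I would exploit the $O(n)$-action on $X_n$ that acts on $(z_1,\ldots,z_n)$ via the standard real representation on real and imaginary parts simultaneously; this action is by complex-linear unitaries of $\C^{n+1}$, hence preserves $\omega$, and each $V_p$ is a single $O(n)$-orbit in the fiber $\pi_n^{-1}(p)$. Being the real locus of the affine quadric $\sum_{j=1}^n z_j^2=1-p^2$ (up to the choice of branch of $\sqrt{1-p^2}$), $V_p$ is Lagrangian in the fiber and therefore isotropic in $X_n$. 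To handle the ``horizontal'' directions I would parametrize both $L_C$ and $F_\eta$ as $(p,\sqrt{1-p^2}\,x)$ with $p$ on the curve and $x\in S^{n-1}$, differentiate, and compute $\omega$ on a horizontal tangent vector paired against a vertical one; the computation collapses using $\sum x_i v_i=0$ for $v\in T_xS^{n-1}$, and a dimension count gives $\dim L_C=\dim F_\eta=n=\tfrac12\dim_{\R} X_n$. Smoothness of $F_\eta$ at $\eta(0)\in\{\pm 1\}$ follows from passing to standard Morse coordinates in which $\pi_n$ looks like $(w_1,\ldots,w_n)\mapsto\sum w_i^2$: the union of $O(n)$-orbit vanishing cycles over a short ray $\{re^{i\theta}:r\in[0,\epsilon)\}$ becomes the smooth real $n$-disk $\{e^{i\theta/2}\sqrt{r}\,x:r\ge 0,\,x\in S^{n-1}\}$, namely a piece of a Lefschetz thimble.

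The topology of $L_C$ follows from viewing $\pi_n\colon L_C\to C$ as an $S^{n-1}$-bundle with structure group in $O(n)$, determined by the monodromy of $\sqrt{1-p^2}$ (two choices of branch differ by the antipodal map $x\mapsto -x$ of $S^{n-1}$, which preserves $V_p$). When $C$ encloses both $\pm 1$, $1-p^2=(1-p)(1+p)$ has winding number $2$ around $0$, so $\sqrt{1-p^2}$ has winding number $1$ and returns to its initial value after one traversal of $C$; the bundle is therefore trivial and $L_C\cong S^1\times S^{n-1}$.

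For the cotangent-fiber claim, I would first exhibit a specific $\eta$ for which $F_\eta$ is \emph{equal} to a cotangent fiber under the standard symplectomorphism $T^*S^n\cong X_n$ given by $(q,p)\mapsto q\cosh|p|+i(p/|p|)\sinh|p|$. Taking $\eta(t)=1+t$ for $t\ge 0$, the points of $F_\eta$ are $(1+t,\,i\sqrt{2t+t^2}\,x)$ for $x\in S^{n-1}$; the substitution $\cosh r=1+t$, $\sinh r=\sqrt{2t+t^2}$ identifies $F_\eta$ with the cotangent fiber $T^*_{(1,0,\ldots,0)}S^n$, parametrised by $p=ry$ with $y\in S^{n-1}$. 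For a general $\eta$, the space of admissible embeddings $[0,\infty)\to\C$ (fixed endpoint at $\pm 1$, linear at infinity) is contractible, so one can connect $\eta$ to the model curve by an isotopy $\eta_s$, producing a Lagrangian isotopy $F_{\eta_s}$ of Lagrangian disks each diffeomorphic to $\R^n$. Since $H^1(F_\eta;\R)=0$, any such Lagrangian isotopy is Hamiltonian. The main technical hurdle I anticipate is promoting this to an \emph{ambient} Hamiltonian isotopy with adequate control at infinity: the linear asymptotic behaviour of $\eta$ ensures each $F_{\eta_s}$ is conical at infinity, and standard cutoff arguments together with the exactness of the Liouville form outside a compact set should yield a compactly supported Hamiltonian realising the isotopy.
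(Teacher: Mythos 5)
Your proposal is correct but more explicit than the paper's proof, which is two sentences long and leans on background from the theory of Lefschetz fibrations. For Lagrangianity, the paper simply invokes the fact that symplectic parallel transport for the fibration $\pi_n$ preserves the spheres $V_p$, since they are vanishing cycles for arbitrary vanishing paths (\cite{SeidelBook}*{Lemma 16.3}), whereas you carry out the $\omega$-computation by hand via the $O(n)$-symmetry and the orthogonality relation $\sum x_i v_i = 0$; both are valid, and your version has the advantage of being self-contained while the paper's is shorter but offloads the work to a citation. For the topology of $L_C$, the paper observes that there are only two isomorphism classes of $S^{n-1}$-bundles over $S^1$ and that the two critical values $\pm 1$ contribute the same monodromy (a Dehn twist, antipodal on the zero section), so the composite is trivial; you instead compute the winding number of $1-p^2=(1-p)(1+p)$ around $0$, which is $\pm 2$ when $C$ encloses both $\pm1$, hence $\sqrt{1-p^2}$ closes up and the bundle is trivial. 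These are really the same $\Z/2$-computation phrased differently, but your winding-number formulation avoids invoking the Dehn-twist description of the monodromy. The identification of $F_\eta$ with a cotangent fiber is the same in both: you and the paper both distinguish the model curves $\eta_\pm(t)=\pm(t+1)$, verify via the explicit symplectomorphism of \cite{McDuffSalamonIntro}*{Exercise 6.20} that $F_{\eta_\pm}=T^*_{\pm1}S^n$, and then isotope a general $\eta$ to the model; the paper says ``Moser's trick'' where you spell out the $H^1(F_\eta;\R)=0$ flux argument and (rightly) flag the non-compactness as a technical point requiring control at infinity. Your care about smoothness of $F_\eta$ at its endpoint via Morse coordinates is a useful addition that the paper leaves implicit.
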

\begin{proof}
The $L_C$ and $F_\eta$ are Lagrangians because parallel transport with respect to the connection induced by the symplectic fibration $\pi_n$ preserves the spheres $V_p$ (they are vanishing cycles for arbitrary vanishing paths in the base), see \cite{SeidelBook}*{Lemma 16.3}. 

Since there are only two types of $S^{n-1}$-bundles over $S^1$, and the closed curve $C$ encircles two critical values which have the same monodromy (a Dehn twist), it follows that $L_C$ is the trivial bundle.  

We now consider the Lagrangians $F_\eta$. Take $\eta_\pm$ such that $\eta_\pm(t)=\pm(t+1)$ for all $t\geq 0$. Then, $F_{\eta_\pm}$ is mapped to $T_{\pm 1}^*S^n$ by the symplectomorphism $X_n\to T^*S^n$ in \cite{McDuffSalamonIntro}*{Exercise 6.20}. For any other $\eta$, one can construct an isotopy to one of the $\eta_\pm$ that lifts to a Hamiltonian isotopy by applying Moser's trick. 
\end{proof}

\begin{remark}
The Floer cohomology of the Lagrangian submanifolds $L_C\cong S^1\times S^{n-1}$ in $T^*S^n$ in the previous lemma was studied in \cite{AlbersFrauenfelderTorus}. These are a particular case of the {\em generalized Polterovich Lagrangians} in \cite{OakleyUsher}.
\end{remark}

\begin{remark}
In this Lefschetz fibration description $\pi_n : X_n\to \C$ of $T^*S^n$, the zero section $S^n$ is the Lagrangian lift of the interval $[-1,1] \subset \C$.
\end{remark}

Let us continue with our study of the Lagrangians $L_C$, where $C$ encloses $\{\pm1\}$. Much of what follows in this section is an adaptation of results in \cite{LekiliMaydanskiy}*{Section 2.2}. The homology long exact sequence of the pair $(T^*S^n,L_C)$ implies that 
$$
H_2(T^*S^n,L;\Z) \cong H_2(T^*S^n;\Z) \oplus H_1(L_C;\Z)
$$
if $n\geq 2$. The group $H_2(T^*S^n;\Z)$ vanishes unless $n=2$, in which case both $\omega$ and $c_1(T^*S^2)$ vanish on $H_2(T^*S^2;\Z)$. For $n\geq 3$, the group $H_1(L_C;\Z)$ has rank 1 and $H_2(T^*S^n,L_C;\Z)\cong \Z$ is generated by a class $\beta$ such that $\pi_n\circ \beta$ covers $C$ once. For $n=2$, $H_1(L_C;\Z)$ has rank 2 and we can pick $\alpha, \beta\in H_2(T^*S^2,L_C;\Z)$ such that their boundaries give a basis for $H_1(L_C;\Z)\cong \Z^2$, with the following properties: $\alpha$ is a Lefschetz thimble for some vanishing cycle $V_p$ (hence it has vanishing Maslov index and symplectic area), while the boundary of $\pi_2\circ \beta$ covers $C$ once. We will now study the $\omega$-area and Maslov index of the disks $\beta$.   

We need some auxiliary notation. 
Denote by $\sigma_{std}:= \frac{i}{2} dz\wedge d\overline {z} = r dr \wedge d\theta$ the standard area form in $\C$. Define, on the set $\C \setminus \{\pm 1\}$ of regular values of the Lefschetz fibration $\pi_n$, the 2-form
$$
\sigma := \frac{i}{2} dz_0\wedge d\overline {z_0} + f^*\sigma_{std},
$$
where $f : \C \setminus \{\pm 1\} \to \C\setminus \{0\}$ is given by $f(z) = \frac{{1-z^2}}{\sqrt{2|1-z^2|}}$. The function $f$ can be thought of as the composition of the two maps
\begin{align*}
\C\setminus\{\pm 1\} &\to \C\setminus\{0\} & \C\setminus \{0\} &\to \C\setminus\{0\} \\
z &\mapsto 1-z^2   & r e^{i\theta} &\mapsto \sqrt{\frac{r}{2}} e^{i\theta}
\end{align*} 
The first map is holomorphic and the second is smooth and orientation-preserving, so $\sigma$ defines a positive measure on $\C\setminus \{\pm 1\}$. It extends to all of $\C$, as a measure that is absolutely continuous with respect to the Lebesgue measure. 

\begin{lemma} \label{L:sigma area}
Given a disk $\beta: (D^2,\partial D^2)\to (X_n,L_C)$ such that $\pi_n\circ \beta$ covers $C$ once, we have
$$
\int_\beta \omega = \int_{\pi_n (\beta)} \sigma.
$$
\end{lemma}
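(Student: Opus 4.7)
The plan is to exhibit a primitive of $\omega$ on $X_n$ whose restriction to $\partial\beta$ splits cleanly into a ``base'' piece and a ``fiber'' piece, and to show that both pieces, along $L_C$, are pulled back from 1-forms on the base whose differentials are the two summands of $\sigma$. On $\C^{n+1}$ we have the standard primitive $\theta = \frac{i}{4}\sum_{j=0}^{n}(z_j\,d\overline{z_j} - \overline{z_j}\,dz_j)$ of $\frac{i}{2}\sum dz_j\wedge d\overline{z_j}$; restrict it to $X_n$ and split it as $\theta = \theta_0 + \theta_{\mathrm{fib}}$ according to $j=0$ versus $j\geq 1$. Then $\theta_0 = \pi_n^*\lambda_{\std}$ with $\lambda_{\std} = \frac{i}{4}(z\,d\overline z - \overline z\,dz)$ satisfying $d\lambda_{\std} = \sigma_{\std}$, so Stokes' theorem gives
\[
\int_\beta \omega = \int_{\partial\beta}\theta_0 + \int_{\partial\beta}\theta_{\mathrm{fib}}, \qquad \int_{\partial\beta}\theta_0 = \int_C \lambda_{\std} = \int_{\pi_n\beta} \sigma_{\std}.
\]
It remains to prove $\int_{\partial\beta}\theta_{\mathrm{fib}} = \int_{\pi_n\beta}f^*\sigma_{\std}$.

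The crux is the pointwise identity
\[
\theta_{\mathrm{fib}}|_{L_C} = (f\circ\pi_n)^*\lambda_{\std}\,\big|_{L_C}.
\]
Parametrize points of $L_C$ as $(z_0, w x_1, \ldots, w x_n)$ with $z_0\in C$, $w = \sqrt{1-z_0^2}$ a continuous branch along $C$ (well-defined because $C$ encloses both branch points of $\sqrt{1-z^2}$) and $(x_1,\ldots,x_n)\in S^{n-1}$. Decompose tangent vectors to $L_C$ into fiber directions ($\delta z_0 = 0$) and horizontal directions ($\delta x = 0$). On fiber directions both sides vanish: $(f\circ\pi_n)^*\lambda_{\std}$ because it is pulled back from the base, and $\theta_{\mathrm{fib}}$ by an immediate cancellation using that the $x_j$ are real. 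On horizontal directions, substituting $z_j = w x_j$, $dz_j = -(z_0/w)\, x_j\, dz_0$ and summing with $\sum x_j^2 = 1$ and $w\overline w = |1-z_0^2|$ yields
\[
\theta_{\mathrm{fib}}|_{L_C} = \frac{i}{4|1-z_0^2|}\bigl[(1-\overline{z_0}^2)\,z_0\,dz_0 - (1-z_0^2)\,\overline{z_0}\,d\overline{z_0}\bigr].
\]
On the other hand, the polar expression $\lambda_{\std} = \tfrac{1}{2}|w|^2\, d\arg w$, together with $|f|^2 = |1-z_0^2|/2$, gives $f^*\lambda_{\std} = \tfrac{|1-z_0^2|}{4}\,d\arg(1-z_0^2)$, and expanding $d\arg(1-z_0^2) = i\bigl(\tfrac{z_0\,dz_0}{1-z_0^2} - \tfrac{\overline{z_0}\,d\overline{z_0}}{1-\overline{z_0}^2}\bigr)$ produces exactly the same expression.

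With the identity in hand, $\int_{\partial\beta}\theta_{\mathrm{fib}} = \int_C f^*\lambda_{\std}$. Since $f^*\lambda_{\std}$ is a smooth primitive of $f^*\sigma_{\std}$ on $\C\setminus\{\pm 1\}$, applying Stokes on $D^2\setminus (\pi_n\beta)^{-1}(B_\varepsilon(1)\cup B_\varepsilon(-1))$ and sending $\varepsilon\to 0$ gives $\int_C f^*\lambda_{\std} = \int_{\pi_n\beta}f^*\sigma_{\std}$: the contribution from each excised neighborhood is $O(\varepsilon)$ because $|f^*\lambda_{\std}| \sim \tfrac{|1-z^2|}{4}\,|d\arg(1-z^2)|$ is of order $\varepsilon$ on a circle of radius $\varepsilon$ around $\pm 1$, whereas $f^*\sigma_{\std}$ is $L^1$ across $\pm 1$. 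Summing with the base piece yields $\int_\beta \omega = \int_{\pi_n\beta}\sigma$.

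I expect the main obstacle to be the direct verification of the key identity, which involves some moderately delicate bookkeeping around $w = \sqrt{1-z_0^2}$, $|w|^2 = |1-z_0^2|$, and the simplification $|1-z_0^2|/(1-z_0^2) = (1-\overline{z_0}^2)/|1-z_0^2|$; once that identity is in place, the remaining Stokes step is routine except for the mild care needed at the critical values $\pm 1$.
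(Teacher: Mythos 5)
Your proof is correct and takes essentially the same approach as the paper: both apply Stokes' theorem to the standard primitive of $\omega$ and identify the resulting boundary integral with $\int_C f^*(\tfrac12 r^2\,d\theta)$, then apply Stokes again downstairs. The only cosmetic differences are that the paper parametrizes $\partial\beta$ and works with $g(t)=\sqrt{1-\gamma(t)^2}$ in place of your pointwise restriction identity $\theta_{\mathrm{fib}}|_{L_C}=(f\circ\pi_n)^*\lambda_{\std}|_{L_C}$, and you spell out the excision argument at $\pm1$ which the paper leaves implicit.
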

\begin{proof}
Take $\beta$ as in the lemma. We can assume the boundary of $\beta$ to be given by $c(t) = \left(\gamma(t), \sqrt{1-\gamma(t)^2}\, s(t)\right)$, where $\gamma:[0,1]\to \C\setminus \{\pm 1\}$ is a degree 1 parametrization of $C$ and $s(t) = (s_1(t),\ldots,s_{n+1}(t))\in S^{n-1}$. Here, $\sqrt{.}$ is the analytic continuation of a branch of the square root along the path $1-\gamma^2$. Write $g(t):= \sqrt{1-\gamma(t)^2}$. We have 
\begin{equation} \label{difference}
\int_\beta \omega - \frac{i}{2} dz_0\wedge d\overline {z_0} = \int_c \sum_{j=1}^{n} \frac{i}{4} (z_j \, d\overline{z_j} - \overline{z_j} \, d{z_j}) = \frac{i}{4} \int_0^1 g \overline g' - \overline g g' dt,
\end{equation}
using on the first identity Stokes' theorem and the fact that $\frac{i}{4} (z_j \, d\overline{z_j} - \overline{z_j} \, d{z_j})$ is a primitive of $\frac{i}{2} dz_j\wedge d\overline{z_j}$. 
A calculation shows that the right side of \eqref{difference} can be written as
$$
\int_0^1 g^*\left(\frac{1}{2}r^2 d\theta\right) = \int_{C}f^*\left(\frac{1}{2} r^2 d\theta\right) ,
$$
where $f$ is the function defined before the lemma. Identifying $C$ with the boundary of $\pi_n(\beta)$ and using Stokes' theorem, the integral on the right equals
$$
\int_{\pi_n(\beta)} f^*\sigma_{std},
$$
which finishes the proof. 
\end{proof}

\begin{remark}
The previous argument also goes through if $C$ is a piecewise smooth curve. This will be helpful in Section \ref{S:HF computations}, when computing operations $\mu^k$ involving several Lagrangians that fiber over paths in $\C$. 
\end{remark}

\begin{corollary}
Suppose that the simple curves $C$ and $C'$ in $\C\setminus \{-1, 1\}$ both enclose $\{-1,1\}$. Then, they bound the same $\sigma$-area if and only if $L_C$ and $L_{C'}$ are Hamiltonian-isotopic.  
\end{corollary}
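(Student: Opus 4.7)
The plan is to prove the two directions separately: the ``only if'' direction follows from the invariance of symplectic areas under Hamiltonian isotopy, while the ``if'' direction proceeds by a flux/Moser argument applied to a family of Lagrangians obtained by interpolating $C$ and $C'$.

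For the backward direction, suppose there exists a Hamiltonian diffeomorphism $\varphi$ of $X_n$ with $\varphi(L_C) = L_{C'}$. Then $\varphi$ induces an isomorphism $\varphi_*\colon H_2(X_n,L_C;\Z)\to H_2(X_n,L_{C'};\Z)$ intertwining the symplectic area pairings. From the discussion preceding Lemma~\ref{L:sigma area}, the image of $\omega\colon H_2(X_n,L_C;\Z)\to \R$ is $\omega(\beta)\cdot \Z$, which by Lemma~\ref{L:sigma area} equals $(\sigma\text{-area enclosed by }C)\cdot \Z$ (the thimble class $\alpha$ arising when $n=2$ contributes nothing since $\omega(\alpha)=0$). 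Comparing with the analogous computation for $L_{C'}$ forces the $\sigma$-areas to agree.

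For the forward direction, suppose the $\sigma$-areas agree. First I would connect $C$ and $C'$ by a smooth family $\{C_t\}_{t\in[0,1]}$ of simple closed curves in $\C\setminus\{\pm 1\}$, each enclosing $\{\pm 1\}$ and each bounding the same $\sigma$-area as $C$. Such a family exists because one can start from an arbitrary smooth interpolation and then correct its $\sigma$-area pointwise in $t$ using small outward or inward normal bumps supported in a region of $\C\setminus\{\pm 1\}$ where $\sigma$ is smooth and positive. This produces a smooth family of Lagrangians $L_t=L_{C_t}$, which extends to an ambient smooth isotopy $\phi_t$ of $X_n$, for instance by lifting a smooth isotopy of $\C$ taking $C_0$ to $C_t$ via the symplectic parallel transport of $\pi_n$, just as in the proof of Lemma~\ref{L:L_C and F_eta}.

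To upgrade $\phi_t$ to a Hamiltonian isotopy, I would verify that the flux of the Lagrangian isotopy $L_t$ vanishes. Writing $X_t$ for the generating vector field of $\phi_t$, the closed 1-form $\iota_{X_t}\omega|_{L_t}$ on $L_t$ pairs with a class $[\gamma]\in H_1(L_t;\R)$ to give $\tfrac{d}{dt}$ of the $\omega$-area of any relative 2-chain in $(X_n,L_t)$ bounding $\gamma$. For $n\ge 3$, $H_1(L_t;\R)\cong\R$ is generated by $\partial\beta_t$, and Lemma~\ref{L:sigma area} gives $\omega(\beta_t)=$ $\sigma$-area enclosed by $C_t$, which is constant in $t$ by construction. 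For $n=2$, the additional generator $\partial\alpha_t$ is a vanishing cycle bounding the Lefschetz thimble $\alpha_t$, whose $\omega$-area is identically zero. Hence the flux vanishes, and the standard Moser argument (absorbing the non-Hamiltonian part of $X_t$ by an ambient Hamiltonian modification of $\phi_t$) upgrades $\phi_t$ to a Hamiltonian isotopy from $L_C$ to $L_{C'}$. The main obstacle to making this plan rigorous is the construction of the constant-$\sigma$-area interpolation $C_t$, but this should follow readily from the positivity of $\sigma$ on $\C\setminus\{\pm 1\}$.
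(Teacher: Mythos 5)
Your proof is correct and follows the same flux/Moser route as \cite{LekiliMaydanskiy}*{Corollary 2.5}, which the paper simply cites for this result: invariance of the area period group under symplectomorphisms for the ``if'' direction, and a constant-area interpolation with vanishing Lagrangian flux for the ``only if'' direction. The one technical detail you flag---constructing the interpolation $C_t$ with constant $\sigma$-area---is handled adequately by the bump-correction idea you sketch, since $\sigma$ is smooth and positive on $\C\setminus\{\pm 1\}$.
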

\begin{proof}
The proof is similar to that of \cite{LekiliMaydanskiy}*{Corollary 2.5}. 
\end{proof}

\begin{lemma} \label{Maslov L_C}
The Maslov index of an oriented disk in $X_n$ with boundary in $L_C$, whose boundary projects to a degree 1 cover of $C$, is $2(n-1)$. The Lagrangians $L_C$ are monotone with monotonicity constant $\tau_C = \frac{\int_{\Omega_C}\sigma}{2(n-1)}$, where $\Omega_C\subset \C$ is the region bounded by $C$ in the plane. 
\end{lemma}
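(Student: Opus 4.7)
The strategy is to compute $\mu_{L_C} \in H^1(L_C;\Z)$ using a global holomorphic volume form on $X_n$ and then deduce monotonicity from Lemma \ref{L:sigma area}. Since $X_n \cong T^*S^n$ is a cotangent bundle, $c_1(TX_n) = 0$ (because $T(T^*Q)$ is isomorphic as a complex bundle to the pullback of $TQ \otimes_{\R} \C$), so the Maslov class $\mu_{L_C}$ is a well-defined cohomology class. Moreover, for any loop $\gamma$ in a Lagrangian $L \subset X_n$ and any non-vanishing holomorphic $n$-form $\Omega_{X_n}$, $\mu_L(\gamma)$ equals twice the winding number of $\theta \mapsto \Omega_{X_n}(e_1(\theta) \wedge \cdots \wedge e_n(\theta)) \in \C^{*}$ for any smoothly varying basis $(e_i(\theta))$ of $T_{\gamma(\theta)} L$. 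Concretely, take $\Omega_{X_n} = \frac{dz_1 \wedge \cdots \wedge dz_n}{2z_0}$ (the Poincar\'e residue of $\frac{dz_0 \wedge \cdots \wedge dz_n}{z_0^2+\cdots+z_n^2-1}$), which extends from the chart $\{z_0 \neq 0\}$ to a nonvanishing holomorphic $n$-form on $X_n$.

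Parametrize $L_C$ by $(\theta,x) \in S^1 \times S^{n-1}$ via $(z_0(\theta), r(\theta) x_1, \ldots, r(\theta) x_n)$, where $z_0 \colon S^1 \to C$ is a degree-one parametrization and $r(\theta) = \sqrt{1 - z_0(\theta)^2}$ is a continuous branch of the square root; this branch closes up because $C$ encloses both zeros of $1 - z_0^2$, so the winding of $1 - z_0^2$ along $C$ is $2$. The generator of interest is $\gamma(\theta) = (\theta, x_0)$ for a fixed $x_0 \in S^{n-1}$; it bounds any disk $\beta$ whose projection covers $C$ with degree one. Taking the basis $(\partial_\theta, E_1, \ldots, E_{n-1})$ of $T_{\gamma(\theta)} L_C \subset \C^{n+1}$, with $E_j = (0, r(\theta) v_j)$ for $(v_j)_{j=1}^{n-1}$ a positively oriented orthonormal basis of $x_0^{\perp} \subset \R^n$, a direct determinant computation yields
$$
\Omega_{X_n}(\partial_\theta \wedge E_1 \wedge \cdots \wedge E_{n-1}) = -\frac{\dot z_0(\theta)\, r(\theta)^{n-2}}{2}.
$$
As $\theta$ traverses $S^1$, $\arg(\dot z_0)$ winds by $2\pi$ (since $C$ is a simple closed curve), and $\arg(r)$ winds by $2\pi$ (half the winding of $r^2 = 1-z_0^2$), so $\arg(r^{n-2})$ winds by $(n-2)\cdot 2\pi$. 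The phase of the right-hand side therefore winds by $(n-1)\cdot 2\pi$ along $\gamma$, and doubling gives $\mu_{L_C}(\gamma) = 2(n-1)$.

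For monotonicity, combine this with Lemma \ref{L:sigma area}: for any disk $\beta$ bounded by $\gamma$, $\int_\beta \omega = \int_{\Omega_C} \sigma$, so $\int_\beta \omega / \mu(\beta) = \tau_C = \frac{\int_{\Omega_C} \sigma}{2(n-1)}$. For $n \geq 3$, $H_2(X_n, L_C; \Z) \cong \Z$ is generated by the class of $\beta$, and monotonicity is immediate. For $n = 2$, the additional generator $\alpha$ is a Lefschetz thimble with vanishing $\omega$-area and Maslov index, so the ratio is still a uniform monotonicity constant. The most delicate aspect of the argument is the bookkeeping of branches of the square root and of orientations of the basis of $x_0^{\perp}$ along the loop, but both are controlled by the even parity of the winding number of $1 - z_0^2$ along $C$.
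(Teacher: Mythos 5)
Your proof is correct, but it takes a genuinely different route from the paper's. The paper computes the Maslov index of a model Lagrangian lift $L_0$ over the unit circle in the standard Lefschetz fibration $\pi(z_1,\ldots,z_n)=\sum z_j^2$ (getting index $n$, using $\Omega = dz_1\wedge\cdots\wedge dz_n$), then invokes a degeneration argument: $C$ is Lagrangian-isotopic to a connected sum $C_{-1}\# C_1$ of small loops around the two critical values, and a disk over $C$ is a connected sum of two model disks, giving $n+n-2 = 2(n-1)$. You instead compute directly on $X_n$, using the Poincar\'e residue form $\Omega_{X_n} = dz_1\wedge\cdots\wedge dz_n/(2z_0)$ and tracking the winding of the argument of $\Omega_{X_n}$ along an explicit tangent frame on $L_C$; the winding from $\dot z_0$ contributes $1$ (Hopf Umlaufsatz) and the branch $r=\sqrt{1-z_0^2}$ contributes $n-2$ (since $1-z_0^2$ winds twice around $0$), yielding $n-1$ in total and hence Maslov index $2(n-1)$. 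The determinant and chain-rule steps check out, including the substitution $\dot r = -z_0\dot z_0/r$ which converts $\dot r\, r^{n-1}/(2z_0)$ to $-\dot z_0\, r^{n-2}/2$.

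Your approach is more self-contained and avoids the somewhat delicate gluing/connected-sum bookkeeping; the paper's approach, by contrast, reuses local model data and is the one that plugs into the potential computation of Lemma \ref{L:disk potential L_C} via the degeneration of \cite{SeidelLES}. One small point in your treatment of $n=2$: besides the Lefschetz-thimble class $\alpha$, the group $H_2(T^*S^2,L_C;\Z)$ picks up a summand $H_2(T^*S^2;\Z)\cong\Z$; the monotonicity equation holds on this summand too because both $\omega$ and $c_1(T^*S^2)$ vanish there, but this should be stated explicitly (the paper records it in the discussion preceding the lemma).
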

\begin{proof}
We begin by considering the Lagrangian lift $L_0$ of the unit circle in the model Lefschetz fibration $\pi : \C^n \to \C$, where $\pi(z_1,\ldots,z_n) = z_1^2 + \ldots + z_n^2$. The vanishing cycle over $p\in \C\setminus \{0\}$ of a vanishing path through $p$ is 
$$
V'_{p}:= \{\sqrt{p} (x_1, \ldots, x_{n}) \, | \, (x_1, \ldots, x_{n}) \in S^{n-1}\},
$$
see \cite{SeidelBook}*{Example 16.5}. We can use the holomorphic volume form 
\begin{equation} \label{Omega}
\Omega = d z_1 \wedge \ldots \wedge d z_{n}
\end{equation}
on $\C^n$ to compute the Maslov index of a disk with boundary in $L_0$. Let $u$ be such a disk, of positive symplectic area and with boundary projecting to a simple cover of the unit circle. Let $\gamma: S^1\to L$ be a parametrization of this boundary loop such that $\pi(\gamma(t)) = e^{it}$. The imaginary part of $\left(e^{-i(nt+\pi)/2}\, \Omega\right)|_{L_0}$ vanishes, hence the Maslov index of $u$ is $n$ (see \cite{SeidelThomas} for similar computations).

To compute the Maslov class of $L_C$ in the statement of the lemma, we observe that $C$ is Lagrangian-isotopic to a connected sum $C_{-1} \# C_{1}$, where $C_{\pm 1}$ is a small simple loop around $\pm 1$ (this argument is inspired by \cite{SeidelLES}). By picking a local trivialization of the Lefschetz fibration $\pi_n$ near $\pm1$, we see that the Maslov class of $L_{C_{\pm 1}}$ can be identified with that of $L_0$ above. This implies that one can think of a disk in $X_n$ with positive symplectic area, and with boundary in $L_C$ projecting to a simple cover of $C$, as a connected sum of two disks as in the previous paragraph. Hence, the Maslov index of the disk with boundary in $L_C$ is $2n-2$, as wanted.  

The monotonicity of $L_C$ and the value of $\tau_C$ now follow from Lemma \ref{L:sigma area}. 
\end{proof}

Recall that, given a monotone Lagrangian $L$ in a symplectic manifold $(M,\omega)$ and a choice of basis $h_1,\ldots,h_m$ for the free part of $H_1(L;\mathbb \Z)$, we can define the {\em disk potential} $W_{L} : (\C^*)^m \to \C$ as
\begin{equation} \label{disk potential}
W_L(x_1,\ldots,x_m) = \sum_{u\in \M} \pm x^{\partial u},
\end{equation}
where $\M$ is the moduli space of $J$-holomorphic maps $u:(D^2,\partial D^2)\to (M,L)$ of Maslov index 2, such that $u(1)=p$, for a generic choice of point $p\in L$ and compatible almost complex structure $J$ on $(M,\omega)$. The sign associated to $u$ depends on the spin structure of $L$. If we write $\langle \partial u,h_i\rangle$ for the $h_i$-component of $[\partial u]$ in the free part of $H_1(L;\mathbb \Z)$, then $x^{\partial u}$ stands for the product $x_1^{\langle \partial u,h_1\rangle} \ldots x_m^{\langle \partial u,h_m\rangle}$. The disk potential does not depend on the choices of generic $p$ and $J$. 

\begin{lemma} \label{L:disk potential L_C}
For $n=2$, the disk potential of $L_C$ is $W_{L_C} = x_1(1+x_2)^2$, in a basis $h_1,h_2 \in H_1(L_C;\Z)\cong \Z^2$ where $h_1$ is a loop projecting to $C$ in degree 1 and $h_2$ is a fiber of the projection $\pi_2|_{L_C}$. 
The disk potential is zero if $n>2$. 
\end{lemma}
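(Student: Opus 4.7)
For $n \geq 3$, the conclusion is immediate from Lemma \ref{Maslov L_C}: the free part of $H_1(L_C;\Z) \cong \Z$ has rank one, and every class in $H_2(X_n, L_C;\Z) \cong \Z$ has Maslov index a multiple of $2(n-1) \geq 4$. Consequently there are no Maslov $2$ disks with boundary on $L_C$, so $W_{L_C} \equiv 0$.

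For $n = 2$, my plan is to enumerate Maslov $2$ holomorphic disks directly in the Lefschetz model. Fix the standard complex structure on $X_2 \subset \C^3$ and take the representative $C = \{|z_0| = R\}$ with $R > 1$, so that $C$ encloses both critical values $\pm 1$. Introducing fiber coordinates $w_1 = z_1 + i z_2$ and $w_2 = z_1 - i z_2$, one has $X_2 = \{w_1 w_2 = 1 - z_0^2\}$ and $L_C = \{z_0 \in C,\, |w_1| = |w_2|\}$. By Lemma \ref{Maslov L_C}, a Maslov $2$ holomorphic disk $u = (u_0, \tilde w_1, \tilde w_2)\colon D^2 \to X_2$ has $u_0 = \pi_2 \circ u$ a degree one holomorphic map onto $\{|z_0| \leq R\}$; after M\"obius reparametrization take $u_0(z) = R z$. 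The constraint $\tilde w_1 \tilde w_2 = (1 - R z)(1 + R z)$ then splits the two zeros $\pm 1/R$ between $\tilde w_1$ and $\tilde w_2$ in one of four possible ways. In each case the remaining nonvanishing holomorphic factor $f \colon D^2 \to \C^*$ is determined by the condition $|\tilde w_1| = |\tilde w_2|$ on $\partial D^2$: this prescribes $\log|f|$ as a continuous function on $\partial D^2$, whose harmonic extension to $D^2$ admits a holomorphic conjugate, pinning $f$ down uniquely up to a multiplicative unit constant. That constant is fixed by the basepoint condition $u(1) = p$ for a generic $p \in L_C$, yielding exactly four Maslov $2$ disks through $p$.

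Computing boundary classes via winding numbers on $\partial D^2$, $u_0$ contributes one unit in the $h_1$-direction, while $\tfrac{1}{2}(\arg \tilde w_1 - \arg \tilde w_2)$ contributes the $h_2$-component and records how the factors $(1 \pm R z)$ are distributed between $\tilde w_1$ and $\tilde w_2$ (via the argument principle applied on $D^2$). The four disks have boundaries $h_1 - h_2$, $h_1$, $h_1$, $h_1 + h_2$; summing gives $W_{L_C} = x_1(x_2^{-1} + 2 + x_2) = x_1 x_2^{-1}(1 + x_2)^2$, which becomes $x_1(1 + x_2)^2$ after the unimodular basis change $h_1 \mapsto h_1 - h_2$ (the new $h_1$ still projects to $C$ in degree one, so satisfies the hypotheses of the lemma). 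The main technical obstacle is the sign calculation: one must verify that the chosen spin structure on $L_C \cong T^2$ assigns identical signs to each of the four disks. This should follow from the discrete symmetries $\tilde w_1 \leftrightarrow \tilde w_2$ and $z \mapsto -z$ of the enumeration, which pair up the four configurations compatibly with the spin structure on $L_C$.
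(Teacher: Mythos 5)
Your treatment of $n \geq 3$ is the same as the paper's. For $n = 2$ the paper supplies no argument of its own; it cites Lekili--Maydanskiy's degeneration-based computation as the proof and points to Auroux's explicit enumeration (using the integrable complex structure) as an alternate verification. Your argument carries out the Auroux-style enumeration in full, and it is correct as far as it goes: in the coordinates $w_1 = z_1 + i z_2$, $w_2 = z_1 - i z_2$ one has $w_1 w_2 = 1 - z_0^2$, a Maslov $2$ disk projects as a degree-one biholomorphism onto $\{|z_0| \le R\}$, the two roots $\pm 1/R$ of $1 - R^2 z^2$ split between $\tilde w_1$ and $\tilde w_2$ in four ways, each determining the nonvanishing factor $f$ via the Riemann--Hilbert problem $|\tilde w_1| = |\tilde w_2|$ on $\partial D^2$, and the winding numbers give boundary classes $h_1 + h_2, h_1, h_1, h_1 - h_2$. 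The potential $x_1(x_2^{-1} + 2 + x_2)$ then becomes $x_1(1 + x_2)^2$ under the unimodular basis change $h_1 \mapsto h_1 - h_2$ you identify. What your route buys is a concrete list of disks with regularity visible by hand; what the degeneration argument buys is sidestepping the explicit factorization at the cost of a gluing/combinatorial analysis.

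The one genuine gap is the sign issue you flag yourself: you must fix a spin structure on $L_C \cong T^2$ and show that all four disks contribute with the same sign relative to the resulting orientations. The involutions $\tilde w_1 \leftrightarrow \tilde w_2$ and $z \mapsto -z$ pair the four configurations, but pairing alone does not give equal signs unless you also verify that these involutions preserve the orientation data coming from the spin structure; this requires a separate check (carried out, for instance, in Cho's sign analysis for Clifford/Chekanov tori or in Auroux's Corollary 5.13 itself). Without that step, your argument determines $W$ only up to the substitution $x_2 \mapsto -x_2$, i.e.\ only up to the two possibilities $x_1(1 \pm x_2)^2$; the lemma's stated form corresponds to a definite choice of spin structure. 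A secondary, minor point: the reduction of an arbitrary Maslov $2$ disk to a degree-one projection uses not just the index formula of Lemma \ref{Maslov L_C} but also the fact, recorded in the discussion preceding it, that the thimble class $\alpha$ and all of $H_2(T^*S^2;\Z)$ carry vanishing Maslov index, so that $\beta$ is the unique Maslov-$2$ generator of $H_2(T^*S^2, L_C;\Z)$. You use this implicitly but should say so.
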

\begin{proof}
For $n=2$, the disk potential is computed in \cite{LekiliMaydanskiy}*{Lemma 2.19}, using a degeneration argument from \cite{SeidelLES}. In the proof of \cite{AurouxAnticanonical}*{Corollary 5.13}, the relevant Maslov index 2 disks are also computed explicitly, using the integrable complex structure in the target. 
The case $n>2$ follows from Lemma \ref{Maslov L_C}.
\end{proof}

Fix $\tau>0$ and a smooth embedded loop $C_\tau\subset \C\setminus \{-1,1\}$ that winds once around $-1$ and $1$ and bounds $\sigma$-area $2(n-1)\tau$. Denote by $L_\tau$, or $(S^1\times S^{n-1})_\tau$, the corresponding Lagrangian $L_{C_\tau}$. By Lemma \ref{Maslov L_C}, $L_\tau$ is monotone with monotonicity constant $\tau$. Observe that we can exhaust $\C\setminus [-1,1]$ by a collection of disjoint simple curves $C$, such that the corresponding monotonicity constants $\tau_{C}$ cover $\R_{>0}$ without repetitions. The matching sphere over the interval $[-1,1]\subset \C$ is the zero section $S^n\subset T^*S^n$.  
Assume that $C_\tau$ is the curve $C$ in Figure \ref{F_w_fig}, and denote by $F_i$ the lifts of the paths $\eta_i$ in the same figure. Similarly, denote by $F'$ the lift of the path $\eta'$.

Recall that two Lagrangian submanifolds $L,L' \subset (X,\omega)$ {\em intersect cleanly} if $K:=L\cap L'$ is a manifold and for every $x\in K$ we have $T_x K = T_xL \, \cap \, T_x L' \subset T_x X$.

\begin{lemma} \label{clean inters1}
For every $i\geq 0$, $F_i$ and $L_\tau$ intersect cleanly. For every $i,j\geq 0$, $F_i$ and $F_{j}$ intersect cleanly. Also, all these Lagrangians intersect $F'$ cleanly. 
\end{lemma}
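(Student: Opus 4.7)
The plan is to analyze each intersection fiberwise with respect to $\pi_n$. First I would perturb the paths $\eta_i, \eta'$ and the closed curve $C$ in Figure \ref{F_w_fig}, keeping their endpoints at $\pm 1$ and their linear tails at infinity fixed, so that the base curves pairwise intersect transversely in $\C \setminus \{\pm 1\}$, and so that two thimble paths meet at $\pm 1$ only when they start from that same critical value and do so in distinct tangent directions. This is clearly possible from the picture, and the resulting Lagrangians are Hamiltonian-isotopic to the original ones.

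Next, for two Lagrangians $L, L'$ in the list with base curves $\Sigma_L, \Sigma_{L'}$, I would examine each intersection point $x \in L \cap L'$ lying over $z_0 = \pi_n(x)$. When $z_0 \neq \pm 1$, the key observation is that both $L$ and $L'$ contain the entire vanishing cycle $V_{z_0}$ in the fiber $\pi_n^{-1}(z_0)$, simply by their construction as unions of $V_{z}$ over their base curves. Hence $L \cap \pi_n^{-1}(z_0) = V_{z_0} = L' \cap \pi_n^{-1}(z_0)$, and the intersection over $z_0$ is an embedded $S^{n-1}$. For the tangent-space condition, I would invoke the symplectic connection of $\pi_n$ to write
\begin{equation*}
T_x L = T_x V_{z_0} \,\oplus\, H_x(T_{z_0}\Sigma_L),
\end{equation*}
with $H_x$ the horizontal lift at $x$, and similarly for $L'$. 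Since $\Sigma_L \pitchfork \Sigma_{L'}$ at $z_0$ in the 2-plane $\C$, the two horizontal lines meet only at $0$, so $T_x L \cap T_x L' = T_x V_{z_0} = T_x(L \cap L')$, which is the clean intersection condition.

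The remaining case is $z_0 \in \{\pm 1\}$, which can only occur for pairs among $F_i, F_j, F'$ starting at the same critical value (since $C_\tau$ avoids $\pm 1$). Here I would use the Lefschetz normal form to identify $\pi_n$ near the critical point with the model $\pi(z) = z_1^2 + \cdots + z_n^2$ on a neighborhood of the origin in $\C^n$. Under this identification, the thimble over a ray $\eta(t) = t e^{i\theta}$ becomes the linear Lagrangian $e^{i\theta/2}\R^n \subset \C^n$, smoothly embedded through the origin. A short computation shows that for two such thimbles with distinct initial directions $\theta, \theta'$, one has $e^{i\theta/2}\R^n \cap e^{i\theta'/2}\R^n = \{0\}$; in particular the two Lagrangians meet only at the critical point, and their tangent spaces there intersect trivially. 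Clean intersection at an isolated point is then automatic.

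The main obstacle is bookkeeping in the perturbation step: one has to check that the figure can be arranged so that all pairwise intersections of base curves away from $\pm 1$ are transverse, and that any thimbles sharing a starting critical value leave it in distinct tangent directions. Once this is ensured, the two cases above exhaust every intersection point, so all pairs $F_i \cap L_\tau$, $F_i \cap F_j$, and the ones involving $F'$ are clean.
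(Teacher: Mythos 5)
Your proposal is correct and takes essentially the same route as the paper; the paper's proof is the one-line observation that the base curves in $\C$ intersect transversely, and your proposal just fills in the (standard) linear algebra over a regular fiber and the Lefschetz local model near the critical points that this observation leaves implicit.
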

\begin{proof}
This follows from the fact that the Lagrangians project under the map $\pi_n:X_n\to \C$ to curves that intersect transversely.  
\end{proof}

\subsection{More Lagrangians in $T^*S^3$} \label{SS:Lagrs in T*S3}

It will be useful to also consider an alternative description of the complex affine quadric 3-fold, which is symplectomorphic to $T^*S^3$. We borrow some notation from \cite{ChanPomerleanoUeda1}. Write 
$$
X = \{(z,u_1,v_1,u_2,v_2) \in \C^5 \,|\, u_1 v_1  = z +1, u_2 v_2 = z - 1 \}.
$$
Consider the Lefschetz fibrations 
\begin{align*}
\pi^i: \C^2 &\to \C \\
(u_i,v_i) & \mapsto u_i v_i + (-1)^i, 
\end{align*}
where $i\in \{1,2\}$. The map $\pi^i$ has a unique critical value at $(-1)^i$ and, given $p\in \C\setminus \{(-1)^i\}$, the vanishing circle in $(\pi^i)^{-1}(p)$ of a vanishing path through $p$ is 
$$
V_{i,p}:= \{(u_i,v_i)\in \C^2 \, | \, \pi^i(u_i, v_i) = p, |u_i|=|v_i| \}.
$$
Write also $V_{i,(-1)^i} = \{(0,0)\}$. For more details, see \cite{ChanPomerleanoUeda1} and \cite{SeidelBook}*{Example 16.5}. The affine quadric $X$ is the fiber product of these two fibrations:
\begin{displaymath}
  \xymatrix{
    & X  \ar[ld]_{f_1} \ar[rd]^{f_2} \ar@{-->}[dd]^z  \\
    \C^2 \ar[rd]_{\pi^1} &  & \C^2 \ar[ld]^{\pi^2} \\
    & \C 
  }
\end{displaymath}

The map $z:X\to \C$ is not a Lefschetz fibration, but it can be thought of as a Morse--Bott analogue, with critical values $\pm 1$ and such that the critical locus over $\pm 1$ is a copy of $\C^*$. We will consider the following analogues of the Lagrangians $L_C$ and $F_\eta$ from the previous section. It will again be useful to have Figure \ref{F_w_fig} in mind.

\begin{definition}\label{D:Lagr N}
 Given a curve $C\subset \C\setminus \{\pm 1\}$ that is the image of an embedding of $S^1$, let 
 $$
 T_C := \bigcup_{z\in C} V_{1,z}\times V_{2,z}.
 $$
Given an embedding $\eta: [0,\infty) \to \C$ such that 
 \begin{itemize}
  \item $\eta(0) = 1$,
  \item $\eta\big((0,\infty)\big) \subset \C\setminus \{\pm 1\}$ and
  \item $\eta(t) = at+b$ for some $a\in \C^*$, $b\in \C$ and $t$ large enough,
 \end{itemize}
let
\begin{align*}
N_\eta &:= \bigcup_{t\geq 0} V_{1,\eta(t)}\times V_{2,\eta(t)}.
\end{align*}
\end{definition}

Several arguments in the previous section can be adapted to this setting. This time, if $C$ encloses $\{-1,1\}$, then $T_C$ is diffeomorphic to a torus $T^3$ and we have 
$$
H_2(T^*S^3,T_C;\Z) \cong H_1(T_C;\Z) \cong \Z^3.
$$
We can pick a basis $\alpha_1$, $\alpha_2$, $\beta$ for this relative homology group, such that $\alpha_1$ is a fiber product of a Lefschetz thimble for $\pi^1$ by a point, and $\alpha_2$ is a fiber product of a point by a Lefschetz thimble for $\pi^2$.  We choose $\beta$ so that its boundary projects to a degree 1 cover of $C$. The fact that the $\alpha_i$ come from Lefschetz thimbles for the $\pi^i$ implies that they have vanishing area and Maslov index. We are left with determining the area and index of $\beta$.  

As before, there is a positive measure $\sigma'$ on $\C$, absolutely continuous with respect to the Lebesgue measure and smooth in $\C\setminus \{\pm 1\}$, such that the following result holds. 
\begin{lemma}
 $T_C$ and $N_\eta$ are Lagrangian submanifolds of $X$. The Lagrangian $T_C$ is diffeomorphic to $T^3$. Given $\beta$ as above, its $\omega$-area is $\int_{\Omega_C} \sigma'$, where $\Omega_C \subset \C$ is the region bounded by $C$, and its Maslov index is 2. Therefore, $T_C$ is monotone with monotonicity constant $\tau_C = \int_{\Omega_C} \sigma' /2$. The $N_\eta$ are Hamiltonian-isotopic to the conormal Lagrangian of the unknot in $S^3$. In particular, they are diffeomorphic to $S^1\times \R^2$ and are exact. 
\end{lemma}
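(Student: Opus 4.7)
The plan is to parallel the arguments of Lemmas~\ref{L:L_C and F_eta}, \ref{L:sigma area}, and \ref{Maslov L_C}, adapted to the two-variable Lefschetz setup where $z: X \to \C$ splits as the fiber product of $\pi^1$ and $\pi^2$. For the Lagrangian property, each $V_{i,z}$ is Lagrangian inside the complex one-dimensional fiber $(\pi^i)^{-1}(z)$ and is preserved by parallel transport for $\pi^i$; consequently the fibered family $V_{1,z}\times V_{2,z}$ over $C$ (respectively over the image of $\eta$) is Lagrangian in $X$, exactly as in Lemma~\ref{L:L_C and F_eta}. To identify $T_C \cong T^3$, I would observe that $z : T_C \to C$ is a $T^2$-bundle whose monodromy around $C$ is the composition of the monodromies around $\pm 1$; each of these acts as a Dehn twist on one of the two $V_{i,\cdot}$-factors and trivially on the other, and since a Dehn twist of $S^1$ is isotopic to the identity the bundle is trivial.

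The $\omega$-area of $\beta$ I would compute by a Stokes' theorem calculation in the style of Lemma~\ref{L:sigma area}, applied separately to the two coordinate pairs $(u_1,v_1)$ and $(u_2,v_2)$. Each pair contributes a boundary integral which, after the substitution analogous to the one defining $f$ in Lemma~\ref{L:sigma area}, takes the form $\int_{\Omega_C} f_i^*\sigma_{std}$ for the auxiliary maps $f_i(z) = (z-(-1)^i)/\sqrt{2|z-(-1)^i|}$. Adding these to the base contribution $\tfrac{i}{2}\, dz \wedge d\bar z$ defines the positive measure $\sigma' = \tfrac{i}{2}\, dz \wedge d\bar z + f_1^*\sigma_{std} + f_2^*\sigma_{std}$, smooth on $\C \setminus \{\pm 1\}$ and absolutely continuous with respect to Lebesgue, and Stokes yields $\int_\beta \omega = \int_{\Omega_C}\sigma'$.

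For the Maslov index I would fix the holomorphic 3-form $\Omega_X$ on $X$ obtained as the Poincar\'e residue of $du_1 \wedge dv_1 \wedge du_2 \wedge dv_2$ from $\C^4$; on the chart $\{v_1 \neq 0\}$, which contains $T_C$ since $v_1$ vanishes only at $z=-1 \notin C$, this reads $\Omega_X = v_1^{-1}\, dv_1 \wedge du_2 \wedge dv_2$. In the natural parametrization of $T_C$ by $(t,\alpha_1,\alpha_2)$, with $t$ covering $C$ and $\alpha_i$ parametrizing $V_{i,z(t)}$, a direct computation gives $\Omega_X|_{T_C} = z'(t)\, dt \wedge d\alpha_1 \wedge d\alpha_2$. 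The phase is independent of $\alpha_1,\alpha_2$, so the Maslov class vanishes on the two fiber classes, while its pairing with $\beta$ equals twice the rotation index of $C$, i.e. $\mu(\beta)=2$. Combined with the area computation this yields monotonicity with constant $\tau_C = \int_{\Omega_C}\sigma'/2$.

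Finally, projecting $N_\eta$ to the second pair of coordinates exhibits it as a circle bundle, with fibers $V_{1,\eta(t)}$ (which are circles for all $t\geq 0$ since $\eta$ avoids the critical value $-1$ of $\pi^1$), over the Lefschetz thimble of $\pi^2$ along $\eta$, a Lagrangian disk in $\C^2$. Since a circle bundle over a disk is trivial, $N_\eta \cong S^1 \times \R^2$. A Moser argument parallel to Lemma~\ref{L:L_C and F_eta}, applied to any isotopy of embeddings from $\eta$ to the standard half-line $\eta_0(t) = 1+t$, produces a Hamiltonian isotopy between the corresponding $N_\eta$'s, and identifies $N_{\eta_0}$ with the conormal of the component $V_{1,1}$ of the Hopf link; exactness then follows because conormals of submanifolds are exact and Hamiltonian isotopies preserve exactness of Lagrangians. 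The step I expect to require the most care is this last identification with a conormal bundle, which involves matching the symplectic structure on $X$ near the critical locus of $z$ over $+1$ with that of the standard conormal neighborhood in $T^*S^3$.
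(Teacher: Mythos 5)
Your proposal follows essentially the same route the paper intends; the paper simply remarks that the arguments of Section 2.1 carry over and defers the rest to Chan--Pomerleano--Ueda, and your write-up fills in exactly those adaptations (Lagrangian property via parallel transport, triviality of the $T^2$-bundle, Stokes computation for the area, phase of a holomorphic volume form for the Maslov index, and circle-bundle structure of $N_\eta$ over a thimble for $\pi^2$).

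One small numerical slip: since $u_iv_i = z-(-1)^i$ appears \emph{linearly} rather than as a square (unlike $1-z_0^2$ in the $X_n$ case), the argument does not double in the Stokes calculation, and the $\sqrt{2}$ normalization should not appear; the correct auxiliary maps are $f_i(z) = (z-(-1)^i)/|z-(-1)^i|^{1/2}$, not $(z-(-1)^i)/\sqrt{2|z-(-1)^i|}$. This does not affect the lemma since $\sigma'$ is defined by the statement rather than specified in advance. Your closing caveat about identifying $N_{\eta_0}$ with the conormal of the unknot via a Weinstein-neighborhood match near the critical locus over $+1$ is well placed; that is precisely the step the paper delegates to the cited reference.
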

\begin{proof}
The proof uses arguments similar to the ones in the previous section, so we omit them. See \cite{ChanPomerleanoUeda1} for the proofs of some of these statements. 
\end{proof}

We can also write the disk potential of $T_C$. 

\begin{lemma} \label{L:disk potential T_C}
The disk potential of $T_C$ is 
$$
W_{T_C} = x_1 (1 + x_2)(1 + x_3),
$$ 
in a basis $h_1,h_2,h_3\in H_1(T_C;\Z)\cong \Z^3$ such that $h_1$ is a loop projecting to $C$ in degree 1, $h_2 = V_{1,z}\times \{p_2\}$ for some $z\in C$ and $p_2\in V_{2,z}$, and $h_3 = \{p_1\}\times V_{2,z}$ for some $z\in C$ and $p_1\in V_{1,z}$. 
\end{lemma}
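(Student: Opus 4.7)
The plan is to adapt the argument from the proof of Lemma \ref{L:disk potential L_C}, which combines the degeneration technique of \cite{SeidelLES} with the local Blaschke-product computations of \cite{AurouxAnticanonical}, to the fiber-product setting $X \cong T^*S^3$. The analogue of Lemma \ref{Maslov L_C} shows that a Maslov $2$ disk with boundary on $T_C$ must project under $z: X \to \C$ to a loop covering $C$ once, so that its boundary class has the form $h_1 + a h_2 + b h_3$ with $a, b \in \Z$ (the fiber classes $h_2, h_3$ contribute $0$ to the Maslov index, as they are thimble-like). Hence $W_{T_C} = x_1 P(x_2, x_3)$ for some Laurent polynomial $P$, and the task is to identify $P$.

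To pin down $P$, I would localize near each critical value. Near $z = -1$ only the fibration $\pi^1: \C^2 \to \C$ is singular, while $\pi^2$ is regular and $V_{2,z}$ varies smoothly. For an almost complex structure compatible with $z: X \to \C$ and with the product decomposition in the second factor, any holomorphic disk whose image stays near $\{z = -1\}$ must be constant in the $(u_2, v_2)$-direction, reducing the problem to counting Maslov $2$ disks in the Chekanov-type torus $\bigcup_{z \in C_{-1}} V_{1,z} \subset \C^2$, where $C_{-1}$ is a small loop around $-1$. An explicit enumeration via Blaschke products (paralleling the argument of \cite{AurouxAnticanonical}*{Corollary 5.13}) produces exactly two such disks through a generic point, with boundary classes differing by the vanishing cycle $[V_{1,z_0}] = h_2$, so that the local contribution is $1 + x_2$. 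The symmetric analysis near $+1$ yields $1 + x_3$.

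I would then invoke Seidel's degeneration \cite{SeidelLES}, allowing $C$ to pinch to the nodal curve $C_{-1} \# C_1$. Under this degeneration a Maslov $2$ disk on $T_C$ breaks into a pair of local disks, one near each critical value, glued along the shared fiber $V_{1,p_0} \times V_{2,p_0}$; conversely, each such pair glues back to a unique disk on $T_C$. The product structure then gives four Maslov $2$ disks with boundary classes $h_1 + \epsilon_2 h_2 + \epsilon_3 h_3$ for $\epsilon_2, \epsilon_3 \in \{0, 1\}$, so that $W_{T_C} = x_1 (1 \pm x_2)(1 \pm x_3)$.

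The principal obstacle I anticipate is the sign analysis, required to promote this to the claimed identity $W_{T_C} = x_1(1+x_2)(1+x_3)$. Under the product spin structure on $T_C$ inherited from the three circle factors (the base loop $C$ and the two vanishing cycles), the contribution of each local disk can be computed as in \cite{AurouxAnticanonical}, and one expects all four signs to agree. A subsidiary issue is ruling out disk bubbling during the degeneration, which follows from the monotonicity of $T_C$ and the absence of disks of non-positive Maslov index. As noted in the excerpt, this potential is also established by an SYZ-fibration analysis in \cite{ChanPomerleanoUeda1}, providing an independent verification of the sign.
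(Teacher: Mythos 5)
The paper's own proof of this lemma is a one-line citation to \cite{ChanPomerleanoUeda1}, who obtain the potential from an SYZ-fibration / wall-crossing analysis in the complement of an anticanonical divisor. Your sketch instead attempts a direct geometric derivation via Seidel-type degeneration and local Blaschke-product counts, paralleling what underlies the proof of Lemma \ref{L:disk potential L_C} (via \cite{LekiliMaydanskiy}). That is a genuinely different and legitimate route, and the final answer is correct; the trade-off is that it requires carrying out a gluing and sign analysis that the paper avoids by citing.

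Two points in the sketch need repair. First, the assertion that a holomorphic disk staying near $\{z=-1\}$ is ``constant in the $(u_2,v_2)$-direction'' cannot be literally true: since $u_2 v_2 = z-1$ and $z$ sweeps out a disk bounded by $C_{-1}$, the second-factor coordinates must vary. What is true, if one chooses an almost complex structure compatible with the fiber-product presentation $X \cong \C^2 \times_\C \C^2$, is that a holomorphic disk splits as a pair $(u_1,u_2)$ of disks in the two $\C^2$-factors with a common base projection; over $C_{-1}$ the fibration $\pi^2$ has no interior critical value, so $u_2$ is a holomorphic \emph{section} of Maslov index $0$ with trivial fiber class, and the enumerative content sits entirely in the $\pi^1$-factor. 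It is this fiber-product decomposition, not fiberwise constancy, that reduces the count to the standard Clifford/Chekanov computation and produces the factor $(1+x_2)$, with $(1+x_3)$ arising symmetrically over $C_1$. Second, the Maslov index bookkeeping in the degeneration is more delicate than stated: each local component you describe has Maslov index $2$, so naive additivity under the pinch $C \rightsquigarrow C_{-1}\# C_1$ would give $4$, not $2$; the resolution involves the simultaneous degeneration of the Lagrangian at the node, which contributes a correction (this is handled carefully in \cite{LekiliMaydanskiy} and \cite{SeidelLES}). You rightly flag bubbling (ruled out by monotonicity) and signs (which require a consistent choice of $\Spin$ structure and orientation conventions) as the remaining issues; addressing these properly is substantial, which is presumably why the paper defers to \cite{ChanPomerleanoUeda1} rather than reproducing such an argument.
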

\begin{proof}
This is computed in \cite{ChanPomerleanoUeda1}.
\end{proof}

We can again exhaust $\C\setminus [-1,1]$ by disjoint simple closed curves $C$, such that the collection of monotonicity constants $\tau_C$ of the $T_C$ covers $\R_{>0}$ injectively. Fix $\tau>0$ and denote by $T^3_\tau$ the Lagrangian torus with monotonicity $\tau$ in this family. Assume that $T^3_\tau$ is the lift of the curve $C$ in Figure \ref{F_w_fig}. Denote also by $N_i$, resp.~$N'$ the lifts of the paths $\eta_i$, resp.~$\eta'$, in Figure \ref{F_w_fig}.

\begin{lemma}
For every $i\geq 0$, $N_i$ and $T^3_\tau$ intersect cleanly. For every $i,j\geq 0$, $N_i$ and $N_{j}$ intersect cleanly. All these Lagrangians intersect $N'$ cleanly.  
\end{lemma}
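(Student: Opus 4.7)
My plan is to adapt the argument of Lemma \ref{clean inters1} to the Morse--Bott setting of the map $z\colon X\to \C$. Each of the Lagrangians $T^3_\tau$, $N_i$ and $N'$ is a union of the (possibly collapsed) tori $V_{1,p}\times V_{2,p}$ over the points $p$ of its base curve in $\C$, so two of them can intersect only over points where their base curves meet. In particular, since $C_\tau$ misses the critical values $\pm 1$, all intersections involving $T^3_\tau$ occur over regular values, and the same holds for intersections of two $\eta$-paths away from their common endpoint $1$. The only truly new feature compared with Lemma \ref{clean inters1} is thus the possibility of a non-transverse meeting of several of the paths $\eta_i$, $\eta'$ at $z=1$, where $V_{2,1}$ collapses to the point $(0,0)$.

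Over a regular value $p\in \C\setminus\{\pm 1\}$ at which two base curves cross transversely, I would argue exactly as in the proof of Lemma \ref{clean inters1}: the set-theoretic intersection of the two Lagrangians is the torus $V_{1,p}\times V_{2,p}$, and at each point $x$ of this torus the tangent space of each Lagrangian splits as $T_x(V_{1,p}\times V_{2,p})$ plus a one-dimensional horizontal complement that projects isomorphically onto the tangent line of its base curve. Transversality in the base forces the two horizontal complements to have trivial intersection, so $T_x L\cap T_x L' = T_x(V_{1,p}\times V_{2,p}) = T_x(L\cap L')$.

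The main obstacle is cleanliness at $z=1$. Here any point of $N_\eta$ with $(u_2,v_2)=(0,0)$ forces $\eta(t)=1$ and hence $t=0$, so for two paths $\eta_i,\eta_j$ leaving $1$ in distinct initial directions the set-theoretic intersection is exactly the circle $V_{1,1}\times\{(0,0)\}$. To analyse tangent spaces I would pass to the standard Lefschetz model for $\pi^2$ at the critical point, writing $\eta_i(t)-1 = a_i t + O(t^2)$ with $a_i\in \C\setminus\{0\}$; the $\pi^2$-thimble over $\eta_i$ is then parametrized near $(0,0)$ by $u_2 = re^{i\phi}$, $v_2 = re^{i(\arg a_i-\phi)}$, with tangent plane at the origin
\[
\Pi_i = \Span_\R\bigl\{(1,0,\cos\arg a_i,\sin\arg a_i),\,(0,1,\sin\arg a_i,-\cos\arg a_i)\bigr\}\subset T_{(0,0)}\C^2.
\]
A short linear-algebra computation gives $\Pi_i\cap\Pi_j = 0$ whenever $\arg a_i\neq \arg a_j$, a condition that holds pairwise for the paths drawn in Figure \ref{F_w_fig}. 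Combined with the common tangent line $T_q V_{1,1}$ in the first factor, this shows $T_xN_i\cap T_xN_j = T_qV_{1,1} = T_x(N_i\cap N_j)$ at every $x = (q,(0,0))$, so the intersection is clean. The same reasoning handles $N_i\cap N'$ at $z=1$, which together with the previous paragraph completes the proof.
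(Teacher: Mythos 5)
Your proof is correct, and it follows the same strategy as the paper: reduce cleanliness of the total-space Lagrangians to transversality of the projected curves in $\C$. The paper's own proof is a single sentence asserting exactly this reduction, so your write-up is in fact more careful than the source. In particular, you correctly isolate the one genuinely delicate case that the paper's phrase "intersect transversely" glosses over: the common critical circle $V_{1,1}\times\{(0,0)\}$ over $z=1$, where all the base paths $\eta_i$ emanate from a shared endpoint rather than crossing in the interior. Your local computation of the thimble tangent planes $\Pi_i$ in the standard Lefschetz model for $\pi^2$ is right (passing to the smooth parameter $\zeta$ with $u_2 = \sqrt{a_i}\,\zeta$, $v_2 = \sqrt{a_i}\,\bar\zeta$ to leading order, one gets $T_xN_i = T_qV_{1,1}\oplus\Pi_i$, and the determinant criterion $\Pi_i\cap\Pi_j = 0$ iff $\arg a_i\neq\arg a_j$ is exactly what you wrote). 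One small imprecision: your closing sentence suggests the critical-fiber analysis is also needed for $N_i\cap N'$ at $z=1$, but $N'$ is the lift of a path emanating from $-1$, not $1$, so it never meets any $N_i$ over a critical value and that case is already fully covered by your regular-value argument. This does not affect the validity of the proof.
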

\begin{proof}
As in Lemma \ref{clean inters1}, this result follows from the fact that the Lagrangians project to curves in the plane that intersect transversely. 
\end{proof}

\section{Wrapped Fukaya categories}

\label{S:Fukaya categories}

The wrapped Fukaya category of a Liouville domain $M$ was introduced in \cite{AbouzaidSeidelViterbo}. In the original definition, its objects are exact Lagrangians in the completed Liouville manifold $\widehat M$. 
The Lagrangians are either compact or agree outside of a compact set with the product of $\R$ with a Legendrian submanifold of the contact manifold $\partial M$.  We will consider various versions of the wrapped Fukaya category, possibly allowing for closed monotone Lagrangians, as in \cite{RitterSmith}. 
For Lagrangians intersecting cleanly, we will use a Morse--Bott formalism similar to that of \cite{SeidelAbstractFlux} to compute the associated $A_\infty$-maps $\mu^k$. 

\begin{remark}
The monotone wrapped Fukaya category is defined in \cite{RitterSmith}, for a non-compact monotone symplectic manifold $(E,\omega)$ that is convex at infinity. In that reference, the monotonicity constant of $E$ is assumed to be strictly positive. Since the first Chern class of $T^*S^n$ vanishes, we are interested in the case of vanishing monotonicity constant. The main results in \cite{RitterSmith} still hold in this case, in particular Theorem 1.1 and equation (9.1), which we will make use of in this article.   
\end{remark}

\subsection{Coefficients}

\label{SS:Coefficients}

Some of the Floer cohomology groups we will study are defined over $\Z$, and some over a {\em Novikov field}. Given a commutative ring $R$, which for us will always be either $\Z$ or $\C$, write
$$
\K_R := \left\{ \sum_{i=0}^\infty a_i T^{\lambda_i} \,| \, a_i \in R , \lambda_i \in \R, \lambda_i< \lambda_{i+1}, \lim_{i\to \infty}\lambda_i = \infty \right\}.
$$
We will 
be mostly interested in $\K_\C$, which will be denoted simply by $\K$. We could replace $\C$ with any algebraically closed field of characteristic zero, so that the Novikov field is algebraically closed. See Section \ref{S:Generation modules} for more on this point. 

There is a {\em valuation} map
\begin{align*}
\val \colon \K_R &\to (-\infty,\infty] \\
\sum_{i=0}^\infty a_i T^{\lambda_i} &\mapsto \min \{\lambda_i \,|\, a_i \neq 0\}
\end{align*}
where $\val(0) = \infty$. Say that $\alpha \in \K_R$ is {\em unitary} if $\val(\alpha) = 0$. Denote by 
$U_{\K_R}^* := \left\{ \alpha \in \K_R \,|\, \val(\alpha) = 0 \text{ and $\alpha$ is invertible}\right\}$ 
the group of invertible unitary elements in $\K_R$ (if $R$ is a field, $\val(\alpha)=0$ implies that $\alpha$ is invertible), by 
$\K_{R,0} := \left\{ \alpha \in \K_R \,|\, \val(\alpha) \geq 0\right\}$ 
the {\em Novikov ring} and by 
$\K_{R,+} := \left\{ \alpha \in \K_R \,|\, \val(\alpha) > 0\right\}$ 
the maximal ideal in $\K_{R,0}$.

\subsection{Morse--Bott Floer cohomology for clean intersections}

We will use a Morse--Bott version of the Fukaya category, where Lagrangians are allowed to intersect cleanly, as in \cite{SeidelAbstractFlux}*{Section 3.2}. For more details on a construction of the Fukaya category with a Morse--Bott definition of the $A_\infty$-algebra of endomorphisms of a Lagrangian submanifold, see \cite{SheridanPOP}*{Section 4}. These references assume that the Lagrangians are exact, which precludes disk bubbling. Lemma \ref{L:vanishing potential} below guarantees that if $(L,\xi)$ is a Lagrangian with a rank 1 unitary local system giving a non-trivial object in the Fukaya category, then $\xi$ corresponds to a zero of the disk potential of $L$. This is useful when considering 2-parameter families of pearly trees of holomorphic disks (to prove the $A_\infty$-relations, for instance), since the vanishing of the disk potentials implies the cancellation of configurations with disk bubbles. Therefore, we can assume for many purposes that the relevant Lagrangians bound no holomorphic disks. For more on the Floer cohomology of cleanly intersecting Lagrangians, see for example \cite{PozniakThesis}, \cite{FrauenfelderThesis}, \cite{FOOO}, \cite{BiranCorneaUniruling}, and \cite{SchmaeschkeClean}.

Let us briefly define the relevant Floer complexes. Let $L_0,L_1$ be two Lagrangians such that each $L_i$ is equipped with:
\begin{itemize}
 \item an orientation and a spin structure;  
 \item a local system $\xi$ 
on a trivial $\K_R$-bundle $E_i = \oplus_k E_{i,k}$, where the direct sum is finite and the summand $E_{i,k}$ of grading $k$ is a finite rank trivial vector bundle over $L_i$, in degree $k$. If the rank of $E_i$ is 1, then the local system can be unitary (with holonomy taking values in $U_{\K_R}^*$). If the rank of $E_i$ is bigger than 1, then the local system has trivial holonomy. 
 \end{itemize}

\begin{remark}
In this article, the zero section in $T^*S^n$, with $n>1$ even, is the only class of Lagrangians that we will equip with local systems of rank greater than 1. 
\end{remark}

\begin{remark}
The choices of spin structures on the Lagrangians are necessary to orient moduli spaces of holomorphic curves. Nevertheless, in our computations we will not be very careful in specifying spin structures on the Lagrangians. This is because the effect of changing the spin structure on a Lagrangian is a change in signs associated to holomorphic curves, and this change can be compensated by the choice of a different local system $\xi$ on the Lagrangian. 
\end{remark}

The categories of exact Lagrangians we will consider are $\Z$-graded, so we will need additional choices of gradings for their objects (as in \cite{SeidelBook}*{Section 12a}). 
The categories of monotone Lagrangians will only be $\Z/2\Z$-graded, so we will not need to choose gradings in that case.

Let $L_0,L_1$ be Lagrangians as above, with local systems $\xi_i$ on $\K_R$-bundles $E_i$ over $L_i$. Write $\mathcal L_i\coloneqq (L_i,\xi_i)$. 
Assume that the Lagrangians intersect cleanly and let $f: L_0 \cap L_1 \to \R$ be a Morse function. In accordance with \cite{SeidelAbstractFlux}*{Equation (3.9)}, we define the cochain complex
$$
CF^*(\mathcal L_0,\mathcal L_1) := \bigoplus_{C\subset L_0\cap L_1} \qquad \bigoplus_{\mathclap{\substack{p\in \crit(f|_C)}}} \, \big(\Hom_{\K_R}\left((E_0)_p,(E_1)_p\right) \otimes_{\Z} \mathfrak o_C \big)[-\deg(p)]
$$ 
where the $C\subset L_0\cap L_1$ are connected components of the intersection, $\mathfrak o_C$ is the orientation line of $C$ (a rank 1 local system over $\Z$ depending on the spin structures of the $L_i$), and 
$\Hom_{\K_R}$ denotes $\K_R$-linear maps. 
The Floer degree associated to the critical point $p$ is $\deg(p) = \dim(C)- \ind(p) + \deg(C)$, where $\ind(p)$ is the Morse index of $p$ as a critical point of $f|_C$ and $\deg(C)$ is an absolute Maslov index, which depends on the gradings of the $L_i$.%
\footnote{With this choice of grading, the graded vector space $CF^*(\mathcal L_0,\mathcal L_1)$ can be identified with $CF^*(\varphi(\mathcal L_0),\mathcal L_1)$, for a Hamiltonian isotopy $\varphi$ supported near $L_0\cap L_1$ and constructed from $f$.}
Recall that according to the cohomological convention that we use, $[-k]$ adds $k$ to the degree.

The operations $\mu^k$ are defined on tensor products of these chain complexes, via counts of {\em pearly trees}. We give a very brief description of these, referring the reader to \cite{SeidelAbstractFlux}*{Section 3.2} for more details. Given a collection $\mathcal L_0, \ldots, \mathcal L_k$ of Lagrangians with local systems, a pearly tree contributing to 
$$
\mu^k: CF^*(\mathcal L_{k-1},\mathcal L_k) \otimes \ldots \otimes CF^*(\mathcal L_0,\mathcal L_1) \to CF^*(\mathcal L_0,\mathcal L_k)
$$
is a collection of perturbed pseudoholomorphic disks (with respect to auxiliary almost complex structures and perturbing 1-forms) with boundary punctures and Lagrangian boundary conditions, connected by gradient flow lines of auxiliary Morse functions and metrics on the clean intersections of the $L_i$. This collection of disks and flow lines can be concatenated into a continuous map from a disk with $n+1$ boundary punctures to the symplectic manifold, with boundary components of the disk mapping to the Lagrangians $L_0, \ldots, L_k$, see Figure \ref{pearly_tree_fig}. The contribution of a rigid configuration of disks and flow lines to $\mu^k$ is determined by the areas of the pseudoholomorphic disks (which are encoded in the exponents of the variable $T$ in the Novikov field), by signs specified by the spin structures on the $L_i$, and by parallel transport with respect to the local systems $\xi_i$ on the $E_i$ along the boundary components of the concatenated disk (with the input elements of $\Hom_{\K_R}(E_i,E_{i+1})$ applied at the boundary punctures).
The $\mu^k$ satisfy the $A_\infty$-relations, which can be written in abbreviated form as $\mu\circ\mu=0$.

\begin{figure}
  \begin{center}
    \def\svgwidth{0.4\textwidth}
    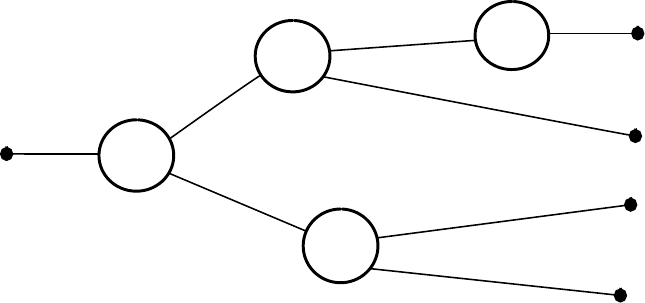
  \end{center}
  \caption{A pearly tree contributing to $\mu^4$}
  \label{pearly_tree_fig}
\end{figure}

We will also consider Fukaya categories containing additional objects. A {\em bounding cochain} on an object $\mathcal L$ in a $\Z/2\Z$-graded Fukaya category is $b\in CF^{\odd}(\mathcal L,\mathcal L)$ satisfying the {\em Maurer--Cartan equation} 
\begin{equation} \label{MC}
\sum_{k=1}^\infty \mu^k(b,\ldots,b) = 0, 
\end{equation}
see \cite{FOOO} (to ensure that all the $A_\infty$-operations converge, we assume that $b$ has strictly positive valuation). We can enlarge our category by allowing objects of the form $(\mathcal L,b)$, for such $b$. The object $\mathcal L$ can be identified with $(\mathcal L,0)$. Given objects $\hat{\mathcal L}_0 = (\mathcal L_0,b_0),\ldots,\hat{\mathcal L}_k = (\mathcal L_k,b_k)$ in the enlarged category, the $A_\infty$-maps 
$$
\hat\mu^k : CF^*(\hat{\mathcal L}_{k-1},\hat{\mathcal L}_{k})\otimes \ldots \otimes CF^*(\hat{\mathcal L}_0,\hat{\mathcal L}_{1}) \to CF^*(\hat{\mathcal L}_0,\hat{\mathcal L}_{k})
$$
are given by 
$$
\hat\mu^k(x_k,\ldots,x_1) := \sum_{l_0,\ldots,l_k \ge 0} \mu^{(k+\sum_i l_i)}(\underbrace{b_k,\ldots,b_k}_{l_k},x_k,b_{k-1},\ldots,b_1,x_1,\underbrace{b_0,\ldots,b_0}_{l_1}).
$$
The fact that the $b_i$ satisfy the Maurer--Cartan equation \eqref{MC} implies the $A_\infty$-equations $\hat\mu \circ \hat \mu = 0$. Since $\hat \mu^k$ agrees with $\mu^k$ when all $b_i=0$, we will continue to write $\mu^k$ instead of $\hat\mu^k$.

\subsection{Wrapped Floer cohomology}

We will use a model for wrapped Floer cohomology from \cite{AbouzaidSeidelFuture}, presented in \cite{GPS1} and \cite{SylvanFunctors}. Let $L_0$ be a non-compact Lagrangian, which in this paper will be either a cotangent fiber $F$ or a conormal Lagrangian of the unknot $N\subset T^*S^3$. We pick a family $L_i$ of Lagrangians that are lifts of paths $\eta_i$ in the base of the Lefschetz fibration $\pi_n$ from Section \ref{SS:Lagrs in T*Sn} (in the case of $F$), or in the base of the fiber product of Lefschetz fibrations $\pi^i$ from Section \ref{SS:Lagrs in T*S3} (in the case of $N$), where the path $\eta_i$ wraps $i$ times around the two critical values, see Figure \ref{F_w_fig}. Then, given another Lagrangian $L'$, we have 
$$
HW^*(L_0,L') := \lim_{i\to \infty} HF^*(L_i,L'),
$$
with the limit taken with respect to the continuation maps relating $L_i$ and $L_{i+1}$. For the equivalence of this model with the usual definitions involving fast growing Hamiltonians, see \cite{GPS1}*{Lemma 3.37} and \cite{SylvanFunctors}*{Proposition 2.6}. In these references, the wrapped Fukaya category is defined by localizing the Fukaya category on the continuation maps that were just mentioned. We will combine this approach to wrapped Floer cohomology with the definition of Morse--Bott Floer cohomology above, where the Lagrangians intersect cleanly and are possibly equipped with local systems and bounding cochains.

\subsection{Wrapped Fukaya categories} \label{S:W}

We will consider several versions of the Fukaya $A_\infty$-category of $T^*S^n$. Recall that $R$ is either $\Z$ or $\C$.

\begin{itemize}
\item $\W^\Z(T^*S^n;\Z)$ is a category whose objects are either the $F_\eta$ from Definition \ref{D:Lagr F} or compact oriented exact Lagrangians. When $n=3$, we also include the objects $N_\eta$ from Definition \ref{D:Lagr N}. Objects are equipped with $\Z$-gradings and spin structures. Morphism spaces are wrapped Floer cochain complexes with coefficients in $\Z$. The differential and higher $A_\infty$-operations count rigid pearly trees, without keeping track of areas (which can be thought of as setting $T=1$ in the Novikov field $\K_\Z$). 
 
 \item $\W^\Z(T^*S^n;\K_R)$ has the same objects as $\W^\Z(T^*S^n;\Z)$. The difference is that the morphism spaces are now wrapped Floer cochain complexes {\em with coefficients in $\K_R$}, to keep track of the symplectic areas of the disks in the pearly trees that contribute to the $A_\infty$ operations. 

\item $\W^{\Z/2\Z}(T^*S^n;\K_R)$ is obtained from $\W^\Z(T^*S^n;\K_R)$ by collapsing the $\Z$-gradings to $\Z/2\Z$-gradings. If $n$ is odd, allow also objects of the form $(S^n,b_\alpha)$, where $S^n$ is the zero section and $b_\alpha=\alpha [pt]$ is a bounding cochain with $\alpha \in \K_{R,0}$ and $[pt] \in H^n(S^n;\K_R)$. See Remark \ref{R:alpha in K_0} below for why we impose $\alpha\in \K_{R,0}$. We have implicitly chosen a perfect Morse function on $S^n$, and $[pt]$ is given by the minimum of that function (the maximum yields the unit in the $A_\infty$-algebra of $S^n$). Since $S^n$ bounds no disks, it is clear that all the summands in \eqref{MC} vanish for $b=b_\alpha$. 
If $n$ is even, we want to allow instead objects corresponding to bounding cochains in $S^n\oplus S^n[1]$ (summing and shifting objects are allowed in the additive enlargement of the Fukaya category). We implement this by equipping $S^n$ with the trivial graded $\K_R$-bundle $E:=\K_R\oplus \K_R[1]$, and bounding cochains $b_{\alpha,\beta}\in H^{odd}(S^n;\End(E))$ of the form 
$$
b_{\alpha,\beta} \coloneqq \begin{pmatrix}
             0 & \beta \\
             \alpha & 0
            \end{pmatrix}_{[pt]},
$$
where $\alpha,\beta\in \K_{R,0}$ and the matrix represents an endomorphism of the fiber of $E$ at the minimum of the auxiliary perfect Morse function on $S^n$. 
 
 \item $\W^{\Z/2\Z}_{\mon}(T^*S^n;\K_R)$ is an extension of $\W^{\Z/2\Z}(T^*S^n;\K_R)$, 
 including closed monotone Lagrangians. The objects are equipped with orientations 
 and spin structures, and are $\Z/2\Z$-graded. We also equip monotone Lagrangians with unitary rank 1 
 local systems over $\K_R$. 
The construction of the monotone wrapped Fukaya category is given in \cite{RitterSmith}. 
See also \cite{SheridanFano} for a definition of the monotone Fukaya category in a closed setting.

 \item $\F^{\Z/2\Z}_{\mon}(T^*S^n;\K_R)$ is the full subcategory of $\W^{\Z/2\Z}_{\mon}(T^*S^n;\K_R)$ containing only those objects whose underlying Lagrangians are closed. 
\end{itemize}

It is an important fact that in all these categories the isomorphism class of objects is preserved by Hamiltonian isotopies; in the presence of bounding cochains, this means that if $b$ is a bounding cochain on $L$, and $L'$ is Hamiltonian-isotopic to $L$, then there is a bounding cochain $b'$ on $L'$ so that the two corresponding objects of the Fukaya category are isomorphic.

\begin{remark} \label{R:alpha in K_0}
  If we equip Lagrangians with bounding cochains valued in the maximal ideal $\K_{R,+}$ of $\K_{R,0}$, then we are guaranteed convergence of all the $A_\infty$-operations deformed by such bounding cochains. In our case, since 
the degree of $[pt]\in H^*(S^n;\Z)$ is $n>1$, we could in fact allow bounding cochains $\alpha [pt]$ for arbitrary $\alpha\in \K_R$ in the category of exact Lagrangians $\W^{\Z/2\Z}(T^*S^n;\K_R)$. 
  Nevertheless, for bounding cochains $\alpha [pt]$ with $\val(\alpha) < 0$, we could run into convergence issues when taking morphisms with monotone Lagrangians in $\W^{\Z/2\Z}_{\mon}(T^*S^n;\K_R)$, which is why we restrict to bounding cochains with coefficients in $\K_{R,0}$.  
With minor modifications to our arguments, we could also have equipped all objects in $\W^{\Z/2\Z}_{\mon}(T^*S^n;\K_R)$ 
with finite rank unitary local systems and suitable bounding cochains.
\end{remark}

Observe that we can define several functors between these categories:
\begin{itemize}
 \item $\mathcal G_1 \colon \W^\Z(T^*S^n;\Z) \to \W^\Z(T^*S^n;\K_R)$ is the identity on objects. Fix a primitive $f_L$ for every exact Lagrangian $L$. Given exact Lagrangians $L_0,L_1$ in $\W^\Z(T^*S^n;\Z)$, map $x\in CW^*(L_0,L_1;\Z)$ to $T^{f_1(x)-f_0(x)}x \in CW^*(L_0,L_1;\K_R)$, where $f_i :=f_{L_i}$.  
 If $u$ is a pearly tree contributing to $\mu^k$ in $\W^\Z(T^*S^n;\Z)$, then 
 the contribution of $u$ to $\mu^k$ in $\W^\Z(T^*S^n;\K_R)$ is weighted by the factor $T^{\int_{D^2} u^*\omega}$, where the integral is over all the holomorphic disks in the pearly tree. The functor $\mathcal G_1$ depends on the choices of primitives $f_L$, but different choices yield isomorphic functors (we could eliminate this choice by incorporating the primitives in the definition of objects of the exact Fukaya category).

 \item $\mathcal G_2 \colon \W^\Z(T^*S^n;\K_R) \to \W^{\Z/2\Z}(T^*S^n;\K_R)$ is given by collapsing the $\Z$-grading to a $\Z/2\Z$-grading, followed by inclusion of objects.

 \item $\mathcal G_3 \colon \W^{\Z/2\Z}(T^*S^n;\K_R) \to \W^{\Z/2\Z}_{\mon}(T^*S^n;\K_R)$ is given by inclusion of objects, as are $\mathcal G_4 \colon \W^{\Z/2\Z}_{\mon}(T^*S^n;\K_\Z) \to \W^{\Z/2\Z}_{\mon}(T^*S^n;\K)$ (recall that $\K=\K_\C$) and $\mathcal G_5 \colon \F^{\Z/2\Z}_{\mon}(T^*S^n;\K_\Z) \to \W^{\Z/2\Z}_{\mon}(T^*S^n;\K)$.
\end{itemize}

\begin{remark}
Let $L$ be a monotone Lagrangian. A unitary rank 1 local system $\xi$ on the trivial $\K_R$-bundle over $L$ can be specified by a homomorphism (the holonomy)
$$
\hol_\xi : H_1(L;\Z) \to U_{\K_R}^*.
$$
If, in the definition \eqref{disk potential} of the disk potential $W_L$, we replace $x^{\partial u}$ with $\hol_\xi(\partial u)$, then we get an element of $\K_R$ that we denote by $W_L(\xi)$. 

When defining the monotone category $\W^{\Z/2\Z}_{\mon}(T^*S^n;\K_R)$, one can only take morphisms between objects $(L_1,\xi_1)$ and $(L_2,\xi_2)$ if $W_{L_1}(\xi_1)=W_{L_2}(\xi_2)$, see \cite{OhMonotoneI}. This does not impose an additional constraint in our case, due to the following result. It can be interpreted as saying that the monotone Fukaya category of $T^*S^n$ is unobstructed. 
\end{remark}

\begin{lemma} \label{L:vanishing potential}
Let $L\subset T^*S^n$ be a compact monotone Lagrangian with a unitary local system $\xi$ on a trivial line bundle. 
Write $\mathcal L = (L,\xi)$. If $HF^*(\mathcal L,\mathcal L;\K_R) \neq 0$, then $W_L(\xi)=0$. 
\end{lemma}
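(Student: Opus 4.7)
The plan is to combine two ingredients already in play at this point in the paper: first, that a cotangent fiber $F \subset T^*S^n$ split-generates the monotone wrapped Fukaya category $\W^{\Z/2\Z}_{\mon}(T^*S^n;\K_R)$, via Ritter--Smith \cite{RitterSmith} as cited just above the lemma; and second, that in the monotone setting the Floer differential on $CF^*(\mathcal L_0, \mathcal L_1)$ squares to multiplication by the difference of the disk potentials, so that objects with distinct $W$-values are mutually orthogonal in the category.

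First I would observe that $W_F = 0$. Since $F$ is a cotangent fiber in the Liouville manifold $T^*S^n$, the tautological one-form $pdq$ restricts to zero on $F$, so Stokes' theorem forces any pseudoholomorphic disk with boundary on $F$ to have vanishing symplectic area and hence to be constant. In particular $F$ bounds no Maslov index two disks, and the disk potential $W_F$ vanishes identically (the local system on the simply connected $F$ is necessarily trivial).

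Next, the standard disk-bubbling identity for the square of the Floer differential gives
\[
\mu^1 \circ \mu^1 = \bigl(W_{L_0}(\xi_0) - W_{L_1}(\xi_1)\bigr) \cdot \Id
\]
on $CF^*(\mathcal L_0, \mathcal L_1)$, for any pair of monotone objects $\mathcal L_0, \mathcal L_1$. When this scalar is non-zero, the operator $\mu^1$ is invertible up to a constant, and a contracting homotopy built from $\mu^1/(W_{L_0}(\xi_0)-W_{L_1}(\xi_1))$ shows that the morphism object is trivial in the derived Fukaya category, i.e., $\mathcal L_0$ and $\mathcal L_1$ are mutually orthogonal. Consequently $\W^{\Z/2\Z}_{\mon}(T^*S^n;\K_R)$ decomposes as a direct sum of orthogonal summands indexed by $W$-values, and $F$ sits in the summand $W=0$.

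Putting these together, if $\mathcal L = (L, \xi)$ satisfies $HF^*(\mathcal L, \mathcal L; \K_R) \neq 0$ then $\mathcal L$ is not isomorphic to the zero object in $\W^{\Z/2\Z}_{\mon}(T^*S^n;\K_R)$; by split-generation by $F$ the Yoneda module $HW^*(F, \mathcal L)$ over $HW^*(F,F)$ cannot vanish (else $\mathcal L$ would be a zero object in the split-closure, contradicting $HF^*(\mathcal L,\mathcal L)\ne 0$); and non-vanishing of $HW^*(F, \mathcal L)$ combined with the orthogonality above forces $W_L(\xi) = W_F = 0$. The main obstacle is more bookkeeping than substance: one must confirm that the Ritter--Smith generation result applies verbatim in the precise conventions used here (Novikov coefficients, $\Z/2\Z$-grading, Morse--Bott pearly trees, rank one unitary local systems), and implement the orthogonality $A_\infty$-coherently by treating a curved morphism object as a contractible matrix factorization of a non-zero scalar rather than an ill-defined chain complex. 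One should also check that invoking the generation result at this point does not create circularity with the main generation theorems proved later in the paper.
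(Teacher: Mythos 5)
The paper's proof is a one-liner: it cites \cite{RitterSmith}*{Theorem 3.2}, which states that disk potentials of monotone Lagrangians with nontrivial self-Floer cohomology are eigenvalues of quantum multiplication by $c_1$ on $QH^*(M,\omega)$; since $c_1(T^*S^n) = 0$ the only such eigenvalue is zero, and the lemma follows immediately. Your route --- $W_F = 0$ for the cotangent fiber, orthogonality across $W$-values from $\mu^1\circ\mu^1 = \bigl(W_{L_0}(\xi_0)-W_{L_1}(\xi_1)\bigr)\Id$, and split-generation by $F$ --- is genuinely different in form. The two auxiliary observations you lead with are fine. The problem is in the generation step.

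What does ``$F$ split-generates $\W^{\Z/2\Z}_{\mon}(T^*S^n;\K_R)$'' actually mean at the moment Lemma \ref{L:vanishing potential} is being proved? In the Oh/FOOO monotone framework --- and as the paper explicitly notes in the remark immediately preceding the lemma --- the morphism space $\hom(\mathcal L_0,\mathcal L_1)$ is only defined when $W_{L_0}(\xi_0) = W_{L_1}(\xi_1)$, so the monotone Fukaya category is a priori a direct sum $\bigoplus_\lambda \F_\lambda$ indexed by disk-potential values. The cotangent fiber $F$ lies in $\F_0$, and the Abouzaid/Ritter--Smith generation criterion applied to $F$ produces generation of $\F_0$ only; it is silent about the $\lambda\neq 0$ summands because $\hom(F,\mathcal L)$ is not even defined when $W_L(\xi)\neq 0$. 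The assertion ``$F$ split-generates the entire monotone wrapped Fukaya category'' is therefore equivalent to the statement that all $\lambda\neq 0$ summands are zero --- which is precisely the lemma being proved. Indeed, in the paper this generation statement appears as part of Proposition~\ref{P:N split-generates}, which comes after the lemma, and the remark just before the lemma is exactly what licenses the collapse of the $\lambda$-decomposition. Invoking it here is circular within the paper's own logical order.

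To close your argument without circularity you would have to show directly that each $\F_\lambda$ with $\lambda\neq 0$ is the zero category. That follows from the per-summand generation criterion in Ritter--Smith: the relevant open-closed map must hit the unit of the generalized $\lambda$-eigenspace $QH^*_\lambda$, and when $QH^*_\lambda = 0$ (as happens here for every $\lambda\neq 0$ since $c_1 = 0$) this is trivially satisfied and forces every object of $\F_\lambda$ to be acyclic. Unpacked in that way your argument is valid, but it amounts to re-deriving the eigenvalue statement of Theorem~3.2 --- the paper simply cites it. Both proofs pivot on the same structural input: the eigenvalue decomposition of $c_1\star -$ on $QH^*$. The circularity you flagged, with the generation theorems proved later in the paper (Theorem~\ref{generate F} et seq.), is not the actual danger; the hazard is internal to the well-definedness of the full monotone Fukaya category $\W^{\Z/2\Z}_{\mon}$ itself.
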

\begin{proof}
This follows from \cite{RitterSmith}*{equation (9.1)} (see \cite{RitterSmith}*{Section 3.17} for a discussion of Lagrangians with local systems). According to that equation, if $HF^*(\mathcal L,\mathcal L;\K_R) \neq 0$ then $m_0(\mathcal L) = W_L(\xi)$ can be computed by applying the open-closed map to the first Chern class of the total space of $T^*S^n$. The lemma follows from the vanishing of this class.  
\end{proof}

\begin{remark} \label{R:crit pts of potentials}
Let $L^n$ be a monotone Lagrangian torus with disk potential $W_L$. The critical points of $W_L$ in $(U_{\K_R}^*)^n$ correspond to the rank 1 unitary local systems $\xi$ on the trivial $\K_R$-line bundle over $L$ for which $HF^*(\mathcal L,\mathcal L;\K_R)\neq 0$, where $\mathcal L = (L,\xi)$, see \cite{SheridanFano}*{Proposition 4.2}. Recall that $HF^*(\mathcal L,\mathcal L;\K_R)\neq 0$ is equivalent to the non-triviality of $\mathcal L$ in $\mathcal F^{\Z/2\Z}_{\mon}(T^*S^n;\K_R)$. By Lemma \ref{L:disk potential L_C}, the Lagrangian tori $L_C\subset T^*S^2$ have disk potential $W_1=x_1(1+x_2)^2$. The critical locus of this potential is given by the condition $x_2=-1$.
Recall also that Lemma \ref{L:disk potential T_C} says that the disk potential of a Lagrangian torus $T_C\subset T^*S^3$ is $W_2=x_1(1+x_2)(1+x_3)$, whose critical locus is given by $x_2=x_3=-1$. 
Observe that, for both tori $L_C\subset T^*S^2$ and $T_C\subset T^*S^3$, the disk potentials vanish on their critical points, which is compatible with Lemma \ref{L:vanishing potential}.
\end{remark}

\subsection{Yoneda functors} \label{SS:Yoneda}

In this section we will be working over the field $\K_\C = \K$, since we will use some formality results from Section \ref{S:Formality algebra}. 
Let $\A_{\K} := CW^*(F,F;\K)$ be the $A_\infty$-algebra of a cotangent fiber in $T^*S^n$, with $n\geq 2$, and let $\mmod^{A_\infty}(\A_{\K})$ be the differential $\Z/2\Z$-graded category of right $A_\infty$-modules over $\A_{\K}$. Given two objects $\M$ and $\M'$ in $\mmod^{A_\infty}(\A_{\K})$, the morphism space $\hom_{\mmod^{A_\infty}(\A_{\K})}(\M,\M')$ is a chain complex computing $\Ext_{\A_{\K}}^*(\M,\M')$, see \cite{SeidelBook}*{Remark 2.15}.

There is a Yoneda functor 
\begin{align*}
\Y : \W^{\Z/2\Z}_{\mon}(T^*S^n;\K) &\to \mmod^{A_\infty}(\A_{\K}) \\
\mathcal L & \mapsto CW^*(F,\mathcal L)
\end{align*}
which restricts to a functor
$$
\Y_c : \F^{\Z/2\Z}_{\mon}(T^*S^n;\K) \to \mmod^{A_\infty}_{pr}(\A_{\K}),
$$
where $\mmod^{A_\infty}_{pr}(\A_{\K}) \subset \mmod^{A_\infty}(\A_{\K})$ is the subcategory of {\em proper modules} $\M$, such that $H^*(\M)$ is finite dimensional over ${\K}$ (the subscript in $\Y_c$ stands for `compact').

Now, let  $A_{\K} := H^*(\A_{\K})$ be the cohomology algebra of $\A_{\K}$. Let $\mmod(A_{\K})$ be the $\Z/2\Z$-graded category of right $A_{\K}$-modules, such that morphism spaces are 
$\Ext_{A_{\K}}^*$ groups (respecting the $\Z/2\Z$-gradings). There is a functor
\begin{align*}
H:\mmod^{A_\infty}(\A_{\K}) &\to \mmod(A_{\K}) \\
\M & \mapsto H^*(\M)
\end{align*}
which restricts to
$$
H_c:\mmod^{A_\infty}_{pr}(\A_{\K}) \to \mmod_{pr}(A_{\K}) \\
$$
where $\mmod_{pr}(A_{\K}) \subset \mmod(A_{\K})$ is the subcategory of finite dimensional $\Z/2\Z$-graded modules over $A_{\K}$.  

Proposition \ref{P:N split-generates} below implies that the functor $\Y$ (hence also $\Y_c$) is cohomologically full and faithful. According to Corollary \ref{C:mod(A) is formal} below, $H$ (hence also $H_c$) is a quasi-equivalence of categories. We conclude the following.
\begin{proposition} \label{C:Yoneda ff}
The composition
\begin{align*}
Y:=H\circ \Y : \W^{\Z/2\Z}_{\mon}(T^*S^n;\K) &\to \mmod(A_{\K}) \\
\mathcal L & \mapsto HW^*(F,\mathcal L)
\end{align*}
and its restriction
$$
Y_c:=H_c\circ \Y_c : \F^{\Z/2\Z}_{\mon}(T^*S^n;\K) \to \mmod_{pr}(A_{\K}) 
$$
are cohomologically full and faithful embeddings.  \qed
\end{proposition}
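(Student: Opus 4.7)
The plan is to deduce this proposition as a formal consequence of two substantive results established later in the paper. Specifically, Proposition \ref{P:N split-generates} (to be proved in Section \ref{S:Generation modules}) will imply that the Yoneda functor $\Y$ is cohomologically full and faithful, via the fact that the cotangent fiber $F$ split-generates $\W^{\Z/2\Z}_{\mon}(T^*S^n;\K)$; and Corollary \ref{C:mod(A) is formal} (from Section \ref{S:Formality algebra}) will imply that $H$ is a quasi-equivalence, via the formality of the $A_\infty$-algebra $\A_\K$, whose cohomology is the graded polynomial algebra $A_\K = \K[u]$ with $\deg u = 1-n$.

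Granting these two inputs, I would proceed by composing the two resulting isomorphisms. First, Proposition \ref{P:N split-generates} gives, for every pair $\mathcal L, \mathcal L' \in \W^{\Z/2\Z}_{\mon}(T^*S^n;\K)$, a natural isomorphism
$$
HW^*(\mathcal L,\mathcal L') \xrightarrow{\cong} \Ext^*_{\A_\K}\bigl(\Y(\mathcal L),\, \Y(\mathcal L')\bigr).
$$
When $\mathcal L, \mathcal L'$ are compact, the modules $\Y(\mathcal L), \Y(\mathcal L')$ are proper because $HW^*(F,\mathcal L') = HF^*(F,\mathcal L')$ is finite dimensional over $\K$, so the analogous statement holds with $\Y_c$ and with the target inside $\mmod_{pr}^{A_\infty}(\A_\K)$. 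Second, Corollary \ref{C:mod(A) is formal} yields, for all $\M, \M' \in \mmod^{A_\infty}(\A_\K)$, an isomorphism
$$
\Ext^*_{\A_\K}(\M,\M') \xrightarrow{\cong} \Ext^*_{A_\K}\bigl(H^*(\M),\, H^*(\M')\bigr),
$$
which restricts to the proper subcategories since $H$ preserves finite dimensionality of cohomology. Composing these two isomorphisms produces precisely the identification $HW^*(\mathcal L, \mathcal L') \cong \Ext^*_{A_\K}(Y(\mathcal L), Y(\mathcal L'))$ that defines the cohomological full-and-faithfulness of $Y$, and of $Y_c$ by restriction.

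The main obstacle is therefore not in this composition step, which is essentially a tautology, but in the two inputs. The generation result is geometric and extends \cite{AbouzaidCotangentFiber} from the exact to the monotone setting; the presence of disk bubbles will be controlled by Lemma \ref{L:vanishing potential}, which forces the disk potentials of the relevant Lagrangians to vanish on the local systems that give nontrivial objects. The formality statement is algebraic and concerns the sparse graded polynomial algebra $\K[u]$ together with its category of $A_\infty$-modules, and should follow from a standard obstruction-theoretic argument exploiting that concentration in a single grading forces higher $A_\infty$-operations to vanish for degree reasons. Once both inputs are available, the proposition follows immediately from the composition above.
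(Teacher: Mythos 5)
Your proposal is correct and matches the paper's argument exactly: the paper deduces the proposition by combining Proposition~\ref{P:N split-generates} (generation by the cotangent fiber, giving full faithfulness of $\Y$) with Corollary~\ref{C:mod(A) is formal} (formality, making $H$ a quasi-equivalence), just as you do. The additional remarks about how the two inputs are themselves established are accurate context but not part of the proof of this proposition.
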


\section{Floer cohomology computations}

\label{S:HF computations}

\subsection{The Lagrangians $F$ and $N$}

Recall from Section \ref{S:construct Lagrangians} that the Lagrangian lifts $F_\eta\subset T^*S^n$ and $N_\eta\subset T^*S^3$ are Hamiltonian-isotopic to, respectively, a cotangent fiber (which we denote by $F$) and the conormal Lagrangian of an unknot in $S^3$ (which we denote by $N$). 

\begin{proposition} \label{P:N split-generates}
The cotangent fiber $F$ generates $\W^\Z(T^*S^n;\Z)$ and $\W^\Z(T^*S^n;\K_R)$, and it split-generates $\W^{\Z/2\Z}_{\mon}(T^*S^n;\K_R)$. When $n=3$, the Lagrangian $N$ split-generates $\W^\Z(T^*S^3;\Z)$, $\W^\Z(T^*S^3;\K_R)$ and $\W^{\Z/2\Z}_{\mon}(T^*S^3;\K_R)$.
\end{proposition}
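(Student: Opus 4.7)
The first statement, that $F$ generates all three versions of the wrapped category, has three parts, each reducing to an established result. The case $\W^\Z(T^*S^n;\Z)$ is precisely the main theorem of \cite{AbouzaidCotangentFiber}. The extension to $\W^\Z(T^*S^n;\K_R)$ is essentially formal: Abouzaid realizes each exact Lagrangian as an iterated cone on copies of $F$, and the same pearly trees produce the analogous twisted complex over $\K_R$ once each disk is weighted by $T^{\mathrm{area}}$; this is the content of applying the functor $\mathcal{G}_1$ to the cone decomposition. The monotone enlargement $\W^{\Z/2\Z}_{\mon}(T^*S^n;\K_R)$ then follows from \cite{RitterSmith}, which establishes generation of the monotone wrapped Fukaya category of any cotangent bundle by a cotangent fiber.

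For the split-generation by $N$ in $T^*S^3$, since $F$ already generates by the above, the task reduces to exhibiting $F$ in the split-closure of the twisted complexes on $N$. I would exploit that $N$ is the conormal Lagrangian of the unknot $K \cong S^1 \subset S^3$. A Morse function on $K$ with exactly two critical points $p_0, p_1$ induces a handle decomposition of $K$, which I would lift to a Lagrangian handle decomposition of $N$ in $T^*S^3$ relating $N$ to cotangent fibers at $p_0$ and $p_1$. Translating this into the wrapped Fukaya category (in the spirit of the cobordism formalism of \cite{GPS1}) should yield an exact triangle of the form
\[
F \to F \to N
\]
where both copies of $F$ are Hamiltonian-isotopic to a cotangent fiber. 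A standard algebraic argument then recovers $F$ as a summand in the split-closure of twisted complexes on $N$. The same strategy should extend to the $\K_R$-coefficient and monotone versions after propagating $\mathcal{G}_1$ and the ambient monotone machinery of \cite{RitterSmith}.

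The main obstacle is justifying this cobordism decomposition rigorously, particularly in the monotone setting $\W^{\Z/2\Z}_{\mon}(T^*S^3;\K_R)$, where disk bubbling must be controlled when lifting the handle decomposition to an exact triangle. For this, Lemma \ref{L:vanishing potential}, combined with judicious choices of local systems and bounding cochains, provides the necessary vanishing of disk potentials on objects with nontrivial Floer cohomology, so that the relevant $A_\infty$-operations are unobstructed. Once the triangle above is in place, identifying $F$ as a split summand of a twisted complex on $N$ is purely algebraic.
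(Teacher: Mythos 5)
Your treatment of the first statement matches the paper: generation of $\W^\Z(T^*S^n;\Z)$ is \cite{AbouzaidCotangentFiber}, the $\K_R$-coefficient version follows formally, and the monotone version is \cite{RitterSmith}.

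For the split-generation by $N$, however, there is a genuine gap in your algebraic reasoning. Even if you establish an exact triangle $F \xrightarrow{\phi} F \to N$, this shows that $N$ lies in the triangulated closure of $F$, \emph{not} the other way around. In general one cannot recover the source and target of a cone from the cone itself. Concretely, under the Yoneda embedding $F$ corresponds to the free module $A = \K[u]$, and if $\phi$ is multiplication by a nonzero polynomial $p(u)$ then $\Cone(\phi)$ corresponds to the torsion module $A/(p)$; no amount of shifts, cones, and retracts of torsion modules will produce the free module $A$. So the claim that ``a standard algebraic argument then recovers $F$ as a summand in the split-closure of twisted complexes on $N$'' fails unless you additionally prove that the connecting map $\phi$ vanishes, i.e.\ that $N \cong F \oplus F[1]$.

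This vanishing is precisely the content of Lemma \ref{HW(N)} in the paper, and it requires a nontrivial computation. The paper uses the functor from \cite{AbouzaidBasedLoops} to the Pontryagin category, expresses $\mathcal{F}(N)$ as a twisted complex on shifts of two points $q_1, q_2 \in S^3$ with differential given by a class $x \in H_0(\Omega_{q_1,q_2}(S^3);\Z) \cong \Z$, and then determines $x = 0$ by independently computing $HF^*(N,S^3;\Z) \cong H^*(S^1;\Z)$ (via a Morse--Bott argument à la Po\'zniak) and observing that a nonzero $x$ would give a torsion or trivial answer instead. Your sketch does not address this step, and without it the split-generation by $N$ does not follow.

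Once the direct-sum decomposition $N \cong F \oplus F[1]$ is in hand, $F$ is literally a retract of $N$, and split-generation by $N$ in all three categories follows immediately from the generation results for $F$; no cobordism formalism or control of disk bubbling is needed at that stage.
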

\begin{proof}
The fact that a cotangent fiber generates $\W^\Z(T^*S^n;\Z)$ is proven in \cite{AbouzaidCotangentFiber}, and the result follows for $\W^\Z(T^*S^n;\K_R)$. Let us recall the argument: it is first shown that a cotangent fiber split-generates, and this is then extended to a proof of generation. The fact that a cotangent fiber split-generates follows from combining the split-generation criterion of \cite{AbouzaidGeneration} with results about cotangent bundles and loop spaces in \cite{AbouzaidBasedLoops}. The split-generation criterion of \cite{AbouzaidGeneration} is extended to the monotone wrapped Fukaya category in \cite{RitterSmith}*{Theorem 1.1}, and can again be combined with results in \cite{AbouzaidBasedLoops} to conclude that a cotangent fiber split-generates $\W^{\Z/2\Z}_{\mon}(T^*S^n;\K_R)$. 

The previous paragraph and Lemma \ref{HW(N)} below imply the result for $N$. 
\end{proof}

\begin{remark}
It would be interesting to determine if a cotangent fiber generates $\W^{\Z/2\Z}_{\mon}(T^*S^n;\K_R)$, but this is not necessary to prove the results in this article about compact monotone Lagrangians. 
\end{remark}

Recall that under our assumption that $n\geq 2$ we have $HW^*(F,F;\Z) \cong \Z[u]$, where $\deg(u)=1-n$, as follows from \cite{AbouzaidBasedLoops}. Denote this ring by $A_\Z$. Also denote by $F_0$ a cotangent fiber corresponding to a lift of a path $\eta_0$ through the critical value $1$ of $\pi_n$, and by $F'$ one that is a lift of a path $\eta'$ through $-1$, see Figure \ref{F_w_fig}. Since $F_0$ and $F'$ are Hamiltonian-isotopic, we have  
\begin{equation} \label{HF(F,F')}
HW^*(F_0,F';\Z) \cong A_\Z.
\end{equation}
On the other hand, 
$$
HW^*(F_0,F';\Z) \cong \lim_{i\to \infty} HF^*(F_i,F';\Z),
$$
where the $F_i$ are lifts of the paths $\eta_i$ illustrated in Figure \ref{F_w_fig}. In our Morse--Bott model, the cochain complex for $CF^*(F_i,F';\Z)$ is described, as a graded abelian group, as
$$
\bigoplus_{k=0}^i H^{*+(n-1)(1+2k)}(S^{n-1};\Z).
$$ 

\begin{lemma} \label{lemma: continuation maps}
The chain level continuation maps
$$
CF^*(F_i,F';\Z) \to CF^*(F_{i+1},F';\Z) 
$$
are inclusions, and the differentials vanish on these chain complexes. 
\end{lemma}
\begin{proof}
There is a compactly supported isotopy of $\C\setminus \{\pm 1\}$ taking the path $\eta_{i+1}$ to a path $\tilde \eta_{i+1}$ such that:
\begin{itemize}
\item $\tilde \eta_{i+1}$ intersects $\eta'$ transversely at precisely $i+1$ points (just like $\eta_{i+1}$) and 
\item $\tilde \eta_{i+1}$ coincides with $\eta_i$ in a connected portion of the path starting at $1\in \C$ and containing all the intersections of $\eta_i$ with $\eta'$. 
\end{itemize}
This isotopy of the base can be lifted to a compactly-supported Hamiltonian isotopy of $T^*S^n$, taking $F_{i+1}$ to a Lagrangian lift $\tilde F_{i+1}$ of the path $\tilde \eta_{i+1}$. The continuation map 
$$
CF^*(F_i,F';\Z) \to CF^*(\tilde F_{i+1},F';\Z) 
$$
is clearly an inclusion, and so must be the map in the statement. 

As for the vanishing of the differentials, notice that for degree reasons this is only a non-trivial statement when $n=2$. The fact that 
$$
A_\Z \cong \lim_{i\to \infty} HF^*(F_i,F';\Z)
$$
and that the continuation maps are inclusions implies that there can be no non-trivial differentials in the chain complexes $CF^*(F_i,F';\Z)$. This is because the direct limit would not have the correct rank in the degrees related by a non-trivial contribution to the differential. 
\end{proof}

The previous result implies the following. 

\begin{lemma} \label{u in F}
Up to a factor $\pm 1$, the unit $e$, resp.~the generator $u$, in $A_\Z$ is represented by the minimum, resp.~maximum, of the auxiliary Morse function on $F_0\cap F' \cong S^{n-1}$, thought of as a class in $HF^{0}(F_0,F';\Z)$, resp.~$HF^{1-n}(F_0,F';\Z)$. 
\end{lemma}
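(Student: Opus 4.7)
The plan is a direct rank and degree count in the Morse--Bott complex $CF^*(F_i,F';\Z)$, exploiting the vanishing of differentials and the fact that the continuation maps are inclusions. First I would observe that for $i=0$ the intersection $F_0\cap F'$ is a single clean component $C_0\cong S^{n-1}$ with Maslov grading $\deg(C_0)=1-n$, which is the $k=0$ summand in the direct-sum formula preceding the lemma. Applying the degree convention $\deg(p) = \dim(C_0) - \ind(p) + \deg(C_0)$, the minimum (of Morse index $0$) sits in Floer degree $n-1+(1-n)=0$, while the maximum (of Morse index $n-1$) sits in Floer degree $0+(1-n)=1-n$. Since the differential vanishes, both are cocycles, yielding nontrivial classes in $HF^{0}(F_0,F';\Z)$ and $HF^{1-n}(F_0,F';\Z)$ respectively.

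Next I would check that the canonical map $HF^*(F_0,F';\Z)\to HW^*(F_0,F';\Z)\cong A_\Z=\Z[u]$ is already an isomorphism in the relevant Floer degrees $0$ and $1-n$. For each $k\geq 0$, the $k$-th summand $C_k\cong S^{n-1}$ carries Maslov grading $(2k+1)(1-n)$, so its minimum contributes to Floer degree $-2k(n-1)$ and its maximum to Floer degree $(2k+1)(1-n)$. One checks immediately that the value $0$ is attained only when $k=0$ via the minimum, and the value $1-n$ only when $k=0$ via the maximum. Therefore, for every $i\geq 0$, the ranks of $CF^{0}(F_i,F';\Z)$ and $CF^{1-n}(F_i,F';\Z)$ are already $1$, so the same is true after passing to the colimit $HW^*(F_0,F';\Z) = \Z[u]$. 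Under the inclusion of complexes $HF^*(F_0,F';\Z) \hookrightarrow HW^*(F_0,F';\Z)$ the minimum of $C_0$ must then represent a generator of $\Z\cdot e$ and the maximum a generator of $\Z\cdot u$, the sign ambiguity being absorbed by the choice of spin structures and orientation conventions for Floer moduli spaces.

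The only real subtlety is bookkeeping: one must check that the grading convention implicit in the stated direct-sum formula for $CF^*(F_i,F';\Z)$ is compatible with the general Morse--Bott formula $\dim(C)-\ind(p)+\deg(C)$ precisely when $\deg(C_0)=1-n$, so that the minimum (and not the maximum) lands in Floer degree $0$. Once this is fixed, no further Floer-theoretic input beyond the vanishing of differentials and the inclusion property of continuation maps is required.
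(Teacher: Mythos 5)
Your argument is correct and in essence the same one the paper intends: you unfold the ``in particular'' by doing the degree bookkeeping in the Morse--Bott complex, using the formula $\deg(p)=\dim(C)-\ind(p)+\deg(C)$ with $\deg(C_k)=(2k+1)(1-n)$, observing that the only generator in Floer degree $0$ is the minimum of $C_0$ and the only one in degree $1-n$ is the maximum of $C_0$, and then invoking the already-established facts that the differentials vanish and the continuation maps are inclusions to transport this identification into $HW^*\cong A_\Z=\Z[u]$. No gap; the sign ambiguity you flag is exactly the $\pm 1$ in the statement.
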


We now consider the Lagrangian $N$.  

\begin{remark}
In the following, we use the cohomological degree shift notation, where $[k]$ corresponds to a shift by $-k$. 
\end{remark}

\begin{proposition}\label{HW(N)}
The Lagrangian $N \in T^*S^3$ is quasi-isomorphic to $F \oplus F[1]$ in $\W^\Z(T^*S^3;\Z)$. 
In particular, $HW^*(N,N;\Z)$ is isomorphic to the graded matrix algebra
$$
B_\Z:=\begin{pmatrix}
 A_\Z & A_\Z[1] \\
 A_\Z[-1] & A_\Z
\end{pmatrix}.
$$
Hence, $HW^*(N,N;\K_R)$ is isomorphic to $B_\Z\otimes_\Z \K_R$ for any commutative ring $R$. 
\end{proposition}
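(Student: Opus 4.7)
The plan is to first establish the quasi-isomorphism $N\simeq F\oplus F[1]$ in the additive enlargement of $\W^\Z(T^*S^3;\Z)$, from which the matrix-algebra structure of $HW^*(N,N)$ will fall out by unfolding the morphism spaces of a direct sum.

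The core step is to compute $HW^*(F,N;\Z)$ as a right module over $A_\Z = HW^*(F,F;\Z)=\Z[u]$ (with $\deg u = -2$), and to show it is isomorphic to the free rank-two module $A_\Z\oplus A_\Z[1]$. Two routes are available. The first works in the Morse--Bott wrapped model: wrapping $F$ into the sequence $F_i$ of lifts of the paths $\eta_i$ in Figure \ref{F_w_fig}, one observes that $\eta_i$ and the defining half-line of $N$ meet transversely in finitely many points $p$ of $\C$, and over each such regular point the clean intersection component in $X$ is the torus $V_{1,p}\times V_{2,p}$; a perfect Morse function on each component makes the direct limit computation tractable, yielding the desired module after signs and continuation maps are sorted out. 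The second, more conceptual route is to invoke an Abbondandolo--Schwarz/Abouzaid-type identification
$$
HW^*(F,N;\Z)\cong H_{-*}(\mathcal{P}^K_q S^3),
$$
where $\mathcal{P}^K_q S^3$ is the space of paths in $S^3$ from a basepoint $q$ to the unknot $K$; since $K\hookrightarrow S^3$ is null-homotopic, the fibration $\Omega_q S^3 \to \mathcal{P}^K_q S^3 \to K$ is trivial up to homotopy, and therefore $\mathcal{P}^K_q S^3 \simeq S^1\times \Omega_q S^3$, giving $HW^*(F,N;\Z)\cong H_{-*}(S^1)\otimes A_\Z\cong A_\Z\oplus A_\Z[1]$, with the right $A_\Z$-action coming from concatenation of loops into the $\Omega_q S^3$ factor.

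I would then upgrade this to a quasi-isomorphism of right $A_\infty$-modules over $\A_\Z := CW^*(F,F;\Z)$. Because the cohomology is a free $A_\Z$-module, lifting a basis of cycles to $CW^*(F,N)$ and extending produces a quasi-isomorphism $\A_\Z\oplus \A_\Z[1]\to CW^*(F,N)$: the obstructions vanish for free modules (equivalently, the polynomial algebra on a generator in negative even degree is intrinsically formal, and $A_\infty$-modules over it with free cohomology are determined up to quasi-isomorphism by their rank and grading shifts). By Proposition \ref{P:N split-generates}, $F$ generates $\W^\Z(T^*S^3;\Z)$, so the Yoneda functor is cohomologically full and faithful and this quasi-isomorphism of $A_\infty$-modules descends to the desired $N\simeq F\oplus F[1]$ in the Fukaya category. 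Unfolding morphism spaces of the direct sum then yields
$$
HW^*(N,N;\Z)\cong \begin{pmatrix}HW^*(F,F) & HW^*(F,F[1]) \\ HW^*(F[1],F) & HW^*(F[1],F[1])\end{pmatrix} = B_\Z,
$$
with multiplication given by matrix multiplication on top of the ring structure of $A_\Z$. The $\K_R$-coefficient statement follows by applying the functor $\mathcal G_1 \colon \W^\Z(T^*S^3;\Z)\to \W^\Z(T^*S^3;\K_R)$ and using that $F$ and $N$ are exact: this functor tensors morphism spaces with $\K_R$ up to a $T$-weighted bookkeeping of symplectic areas that does not alter the underlying $\K_R$-module structure, giving $HW^*(N,N;\K_R)\cong B_\Z\otimes_\Z \K_R$.

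The main obstacle is the first step: pinning down not just the graded abelian group structure of $HW^*(F,N;\Z)$ but its right $A_\Z$-module structure. The path-space route renders this transparent provided one has an Abbondandolo--Schwarz/Abouzaid-type isomorphism that intertwines the wrapped product with concatenation of loops, applied to the pair $(F,N)$; such statements are well established for pairs of cotangent fibers, and the extension to the cotangent fiber/conormal pair is routine but requires care with clean-intersection geometry and with orientations.
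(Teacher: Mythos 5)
Your proposal is correct in outline, but it follows a genuinely different route from the paper's proof. The paper uses the cohomologically full and faithful functor $\mathcal F\colon \W^\Z(T^*Q;\Z) \to \Tw(\mathcal P(Q))$ of \cite{AbouzaidBasedLoops}: after a Hamiltonian isotopy, $N$ meets the zero section transversally in two points of consecutive index, so $\mathcal F(N)$ is a two-term twisted complex, i.e.\ $N \simeq \Cone(F_q \xrightarrow{\;x\;} F_q)$ for some $x\in HW^0(F_q,F_q;\Z)\cong\Z$; then it computes $HF^*(N,S^3;\Z)\cong H^*(S^1;\Z)$ via a Pozniak-type argument for clean intersections and observes that this forces $x=0$. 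You instead propose to compute $HW^*(F,N;\Z)$ directly as a right $A_\Z$-module (via either a Morse--Bott wrapping count or an Abbondandolo--Schwarz/Abouzaid-type identification of $HW^*(F,N^*K)$ with $H_{-*}$ of a path space), and then upgrade to an $A_\infty$-module quasi-isomorphism using intrinsic formality of free modules over $\Z[u]$, followed by the Yoneda embedding. Both approaches run through loop/path-space topology, and your argument does not create a circularity with Proposition~\ref{P:N split-generates} (only its $F$-generation part, which is independent of Proposition~\ref{HW(N)}, is invoked). What the paper's route buys is that its two ingredients — the functor $\mathcal F$ and Pozniak's theorem — already exist in the literature over $\Z$ in a ready-to-use form; what your route would buy is a more direct identification of the Yoneda module, but at the price of establishing the product-intertwining path-space isomorphism for cotangent fiber/conormal pairs over $\Z$ (with the correct orientations and $A_\Z$-module structure), which you flag yourself as requiring care and is not an off-the-shelf citation. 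Likewise, the Morse--Bott alternative would need an independent argument that the relevant wrapped differentials vanish — a point the paper itself only settles a posteriori, using the very proposition under discussion to constrain them. In short: correct strategy, different and somewhat heavier machinery, with a key technical step left as an acknowledged gap.
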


\begin{proof}
Recall the construction, in \cite{AbouzaidBasedLoops}, of a cohomologically fully faithful $A_\infty$-functor
$$
\mathcal F : \mathcal W^\Z(T^*Q;\Z) \to \Tw(\mathcal P(Q)),
$$
where the target is a category of twisted complexes on a Pontryagin category $\mathcal P(Q)$ of a closed spin manifold $Q$. Objects in $\mathcal P(Q)$ are points in $Q$, with $\hom_{\mathcal P(Q)}(q_1,q_2) = C_{-*}(\Omega_{q_1,q_2}(Q);\Z)$ (cubical chains) and composition defined via concatenation of paths.  Here, $\Omega_{q_1,q_2}(Q)$ is the space of Moore paths in $Q$ that start at $q_1$ and end at $q_1$. Write also $\Omega_{q}$ for $\Omega_{q,q}$. Given an object in $\mathcal W^\Z(T^*Q;\Z)$, which is a $\Z$-graded exact spin Lagrangian $L$ in $T^*Q$, we can assume (up to a Hamiltonian isotopy) that $L$ intersects the zero-section transversely at the points $q_1,\ldots,q_m$. The image of $L$ under $\mathcal F$ is a twisted complex supported on a direct sum of grading shifts of the $q_i$. The differential in the twisted complex is constructed from moduli spaces of Floer strips between $Q$ and $L$. 

Let us use this functor in our setting. The Lagrangian $N$ intersects $S^3$ cleanly along a copy of $S^1$. One can deform $N$ by a Hamiltonian isotopy so that it intersects $S^3$ transversely at exactly two points $q_1$ and $q_2$, with consecutive indices. Hence, $\mathcal F(N)$ is a twisted complex supported on the sum of shifts of $q_1$ and of $q_2$. The differential on this twisted complex is given by a cycle in $C_{0}(\Omega_{q_1,q_2}(S^3);\Z)$. Homologous cycles yield quasi-isomorphic twisted complexes, so the differential on $\mathcal F(N)$ is determined by an element $x\in H_{0}(\Omega_{q_1,q_2}(S^3);\Z) \cong \Z$. 

Given $q\in S^3$ and identifying $H_{-*}(\Omega_{q}(S^3);\Z)$ with $HW^*(F_q,F_q;\Z)$, we can say that $N$ is quasi-isomorphic to $\Cone(F_q\stackrel{x}{\to} F_q)$ in a category of twisted complexes over $\mathcal W^\Z(T^*Q;\Z)$, where $x$ is now thought of as a class in $HW^0(F_q,F_q;\Z) \cong \Z$. In particular, up to a degree shift, 
\begin{align*}
HF^*(N,S^3;\Z) &\cong H^*\left(\Cone(HF^*(F_q,S^3;\Z) \stackrel{x}{\to} HF^*(F_q,S^3;\Z)) \right) \cong \\
&\cong H^*\left(\Cone(\Z \stackrel{x}{\to} \Z)\right) \cong \begin{cases}
\Z[1]\oplus \Z & \text{ if } x=0 \\
(\Z/x \Z) & \text{ otherwise} 
\end{cases}. 
\end{align*}

On the other hand, one can adapt \cite{PozniakThesis}*{Proposition 3.4.6} to Floer cohomology with $\Z$-coefficients (instead of $\Z/2\Z$), and conclude that 
$$
HF^*(N,S^3;\Z) \cong H^*(S^1;\Z),
$$
up to a degree shift. Therefore, we conclude that $x=0$, the differential in the twisted complex $\mathcal F(N)$ is trivial, and $N$ is quasi-isomorphic to $F\oplus F[1]$, as wanted. 
\end{proof}

\begin{remark}
Strictly speaking, the argument above only implies that $N=F\oplus F[1]$ up to a global degree shift. However, this will be enough for our purposes, since the main application of the previous proposition will be in Lemma \ref{HF(N,T)}, which is about the $\Z/2\Z$-graded monotone Fukaya category. 
\end{remark}

The ring $B_\Z$ of endomorphisms of $F\oplus F[1]$ can be represented pictorially as follows:
\tikzset{node distance=2cm, auto}
\begin{center}
\begin{tikzpicture}
  \node (A) {$F$};
  \node (B) [right of=A] {$F[1]$};
  \draw [->,out=45,in=135,looseness=0.75] (A.north) to node[above]{$%
  \begin{pmatrix}
  0 & * \\
  0 & 0
  \end{pmatrix}%
  $}  (B.north);
  \draw [->,out=-135,in=-45,looseness=0.75] (B.south) to node[below]{$%
  \begin{pmatrix}
  0 & 0 \\
  * & 0
  \end{pmatrix}%
  $} (A.south);

  \path (A) edge [->,in=160, out = 200, loop] node[left] {$%
  \begin{pmatrix}
  * & 0 \\
  0 & 0
  \end{pmatrix}%
  $} (A);
  \path (B) edge [->,in=340, out = 15, loop] node[right] {$%
  \begin{pmatrix}
  0 & 0 \\
  0 & *
  \end{pmatrix}%
  $} (B);
  \end{tikzpicture}
\end{center}
Define 
$$
e_1 := \begin{pmatrix}
 1 & 0 \\
 0 & 0
\end{pmatrix}, \,
e_2 := \begin{pmatrix}
 0 & 0 \\
 0 & 1
\end{pmatrix}, \,
e_{21} := \begin{pmatrix}
 0 & 0 \\
 1 & 0
\end{pmatrix}, \,
e_{12} := \begin{pmatrix}
 0 & 1 \\
 0 & 0
\end{pmatrix}.
$$
Note that 
$$
|e_1| = 0 = |e_2|, \, |e_{21}| = 1, \, |e_{12}| = -1. 
$$
As a graded free abelian group, $B_\Z$ has generators in low degrees given by
$$
\begin{tabular}{c|c|c|c|c|c}
 degree & 1 & 0 & $-1$ & $-2$ & $-3$ \\
\hline
 generator & $e_{21}$ & $e_1, e_2$ & $e_{12}, u e_{21}$ & $u e_1, u e_2$ & $u e_{12}, u^2 e_{21}$ 
\end{tabular}
$$

\

In a manner similar to what we did above for $F$, let us give a more explicit description of the Morse--Bott wrapped Floer cohomology of $N$. Denote by $N'$ the lift of a path $\eta'$ through $-1$ and by the $N_i$ lift of a path $\eta_i$ through $1$ that winds $i$ times around the critical values of the Morse--Bott Lefschetz fibration, see Figures \ref{F_w_fig} and \ref{N0N1_fig}.

By Proposition \ref{HW(N)}, we know that, 
$$
B_\Z\cong HW^*(N_0,N';\Z) \cong \lim_{i\to \infty} HF^*(N_i,N';\Z).
$$
The Morse--Bott Floer cochain complex for $CF^*(N_i,N';\Z)$ with $i\geq 0$ is given, as a graded abelian group, by
$$
\bigoplus_{k=0}^i H^{*+2k+1}(T^2;\Z).
$$ 
Similarly to what is stated in Lemma \ref{lemma: continuation maps} for $F$, the continuation maps
$$
CF^*(N_i,N';\Z) \to CF^*(N_{i+1},N';\Z) 
$$
are inclusions and the differentials vanish on these chain complexes. 

\begin{figure}
  \begin{center}
    \def\svgwidth{0.3\textwidth}
    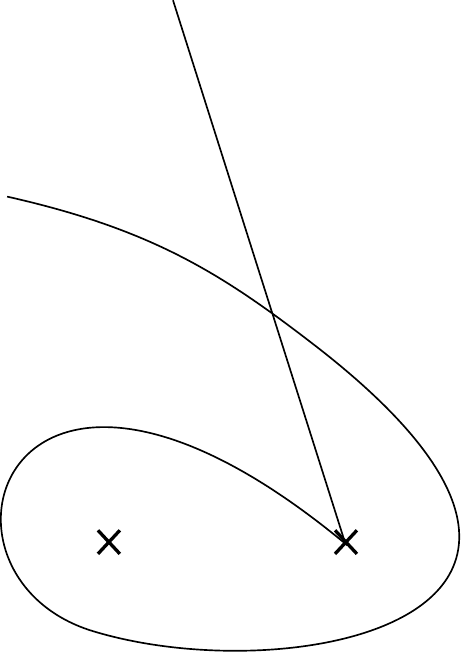
  \end{center}
  \caption{$N_0$ and $N_1$}
  \label{N0N1_fig}
\end{figure}

We also have 
$$
B_\Z\cong HW^*(N_0,N_0;\Z) \cong \lim_{i\to \infty} HF^*(N_i,N_0;\Z).
$$
For $i>0$, the Morse--Bott Floer cochain complex for $CF^*(N_i,N_0;\Z)$ is 
\begin{equation} \label{HF(N1,N0)}
H^*(S^1)\oplus \bigoplus_{k=1}^i H^{*+2k}(T^2;\Z)
\end{equation}
and we have again that the continuation maps are inclusions and the differentials vanish. 

As we saw after Proposition \ref{HW(N)}, the free abelian group $HW^*(N,N;\Z)$ has two generators in degree $-2$, denoted by $u e_1$ and $u e_2$. 

\begin{remark}
At several points in this paper, including the proof of the next result, we will explicitly compute certain products $\mu^2$ via counts of holomorphic curves. Since we will always be in a position where we can compute the product on cohomology, and since the relevant holomorphic curves will always project to triangles in $\C$ over which the Lefschetz fibrations of interest are trivial, it will suffice to make all the calculations using a product complex structure, for which the relevant holomorphic curves are regular. 
\end{remark}

\begin{lemma} \phantomsection \label{ue geometric}
\begin{enumerate}
 \item The fundamental class of $S^1$ in \eqref{HF(N1,N0)}, with $i=1$, represents the class $\pm e = \pm(e_1 + e_2)\in HW^0(N_1,N_0,\Z)$, where $e$ is the unit. 
 \item The fundamental class of $T^2$ in \eqref{HF(N1,N0)}, with $i=1$, represents the class $\pm u e_1 \pm u e_2 \in HW^{-2}(N_1,N_0;\Z)$. \label{part: part 2}
\end{enumerate}
\end{lemma}

\begin{proof}
The statement in (1) follows from the fact that the canonical map $H^*(S^1) \to HW^*(N_0,N_0;\Z)$ is a ring map, so it preserves units. 

For (2), it is convenient to also consider the Lagrangian $N'$. The product $\mu^2$ gives a map 
$$
HF^0(N_0,N';\Z) \otimes HF^{-2}(N_1,N_0,\Z) \to HF^{-2}(N_1,N';\Z).
$$
Figure \ref{N0N1_fig} will be useful to understand the map 
\begin{equation} \label{0 to -2}
HF^0(N_0,N';\Z) \to HF^{-2}(N_1,N';\Z)
\end{equation}
given by right multiplication with the fundamental class of $T^2$ in $HF^{-2}(N_1,N_0,\Z)$, which lies over the intersection point $y$ in Figure \ref{N0N1_fig}. Note that $HF^0(N_0,N';\Z)\cong HW^0(N_0,N';\Z) \cong \Z^2$ is generated by classes that lie over the point $x$ in the Figure, and that $HF^{-2}(N_1,N';\Z)\cong HW^{-2}(N_1,N';\Z) \cong \Z^2$ is generated by classes that lie over the point $z$. The product can now be computed by lifting the shaded triangle. Since the fibration is trivial over this triangle, there is a $T^2$-family of such lifts. Inserting the fundamental class of $T^2$ over $y$ does not impose any constraint on this family of disks, which implies that the map \eqref{0 to -2} is an isomorphism. Since we are working over $\Z$, this means that the fundamental class of $T^2$ represents $\pm u e_1 \pm u e_2$, as wanted. 
\end{proof}

\begin{remark}
We are not specifying if the two signs in part \eqref{part: part 2} above are the same, since that is not necessary where this result is applied later in the paper. Nevertheless, a surgery argument related to Lemma \ref{Ltau is cone} below should imply that the signs are equal, hence $\pm u e_1 \pm u e_2 = \pm u e$. 
\end{remark}

\subsection{Computations in $T^*S^n$}

We begin by assuming that {\bf $n$ is odd}. The wrapped Fukaya category $\W^{\Z/2\Z}(T^*S^n;\K_R)$ contains objects of the form $(S^n,\alpha[pt])$, were $\alpha \in \K_{R,0}$ and $[pt] \in H^n(S^n;\K_R)$ is the class of a point. 
We want to understand how a cotangent fiber $F$ acts on such an object. 

Let $F_i$ and $F'$ be as in the previous section. Given $a \in HF^*(F_i,F';\K_R)$ and $X\in \W^{\Z/2\Z}(T^*S^n;\K_R)$, define a map 
\begin{align*}
 \psi_a^X \colon HF^*(F',X;\K_R) & \to HF^{*+\deg(a)}(F_i,X;\K_R) \\
 x & \mapsto \mu^2(x,a)
 \end{align*}

\begin{figure}
  \begin{center}
    \def\svgwidth{0.4\textwidth}
    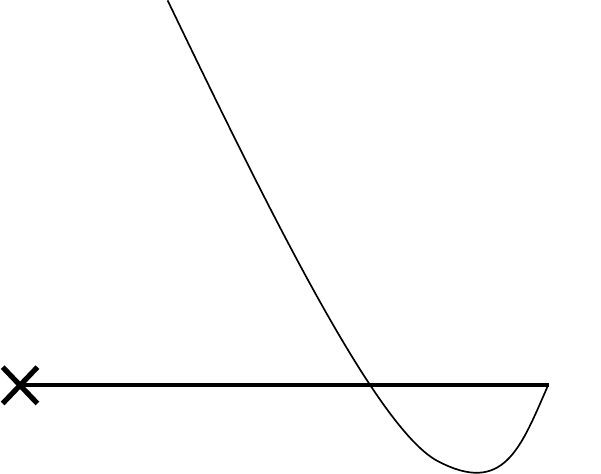
  \end{center}
  \caption{The chain complexes $CF^*(F_0,(S^n, \alpha[pt]))$ and $CF^*(F',(S^n, \alpha[pt]))$}
  \label{HF(F,Sn)_fig}
\end{figure}

\begin{lemma} \phantomsection \label{HF(F,Sn) odd}
\begin{enumerate}
\item There is an isomorphism \label{HF odd}
$$
HF^*(F,(S^n, \alpha[pt]);\K_R) \cong \K_R.
$$
\item  \label{u act on Sn odd}  Using the identification in Lemma \ref{u in F} of $e\in HF^{0}(F_0,F';\Z)$ with the class of a point in $S^{n-1}$, and of $u\in HF^{1-n}(F_0,F';\Z)$ with the fundamental class of $S^{n-1}$, we have  
$$\psi_{u}^{(S^n,\alpha[pt])} = \pm\alpha \, \psi_{e}^{(S^n,\alpha[pt])}.$$
\end{enumerate}
\end{lemma}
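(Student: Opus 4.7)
The plan is to compute $HF^*(F,(S^n,\alpha[pt]))$ and the operations $\psi_e,\psi_u$ via explicit pearly tree counts in a convenient Morse--Bott representative, projecting the relevant holomorphic curves to the base of the Lefschetz fibration $\pi_n$ when it helps.

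For part~(1), I take $F$ to be a cotangent fiber at a generic point $q\in S^n\setminus\{pt\}$, so that $F\cap S^n=\{q\}$ transversely and $CF^*(F,S^n)$ is one-dimensional, generated by $q$. The deformed differential $\hat\mu^1(x)=\sum_{l\ge 0}\mu^{1+l}((\alpha[pt])^l,x)$ vanishes identically: the $l=0$ term is zero because the complex is concentrated in a single $\Z/2\Z$-degree, while each $l\ge 1$ term would require a nonconstant pseudoholomorphic disk with boundary on $F\cup S^n$ whose $S^n$-boundary meets $pt$ via $l$ Morse flow lines. Exactness of both $F$ and $S^n$ rules out such nonconstant disks, and genericity of $q\ne pt$ rules out rigid Morse-flow configurations on a constant disk at $q$. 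Hence $HF^*(F,(S^n,\alpha[pt]))\cong \K_R$.

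For part~(2), I first identify $e$ with the continuation image of the unit of $HW^*(F_0,F_0)$: in the clean intersection $F_0\cap F'\cong S^{n-1}$, a Pozniak-type argument shows that the minimum of a perfect Morse function represents this unit, so $\psi_e$ is (up to sign) a continuation isomorphism $HF^*(F',X)\to HF^*(F_0,X)$ of one-dimensional $\K_R$-spaces. The task reduces to computing $\hat\mu^2(x,u)$ on the one-dimensional $HF^*(F_0,(S^n,\alpha[pt]))$. A virtual dimension count shows that, in the expansion $\hat\mu^2(x,u)=\sum_{l\ge 0}\alpha^l\mu^{2+l}([pt]^l,x,u)$, only the $l=1$ term contributes a rigid count (the others have moduli of negative or strictly positive dimension), and similarly only the $l=0$ term contributes to $\hat\mu^2(x,e)$. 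Thus $\hat\mu^2(x,u)=\alpha\,\mu^3([pt],x,u)$ and $\hat\mu^2(x,e)=\mu^2(x,e)=\psi_e(x)$, and the claim reduces to the single identity $\mu^3([pt],x,u)=\pm\psi_e(x)$.

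To establish this last identity, I would realize both sides as counts in the base of $\pi_n$, in the spirit of Lemma~\ref{ue geometric}: the holomorphic triangle contributing to $\mu^2(x,e)$ and the holomorphic $4$-gon contributing to $\mu^3([pt],x,u)$ both project to the same region in $\C\setminus\{\pm 1\}$ bounded by the projections of $F_0,F'$ and $S^n$, with the $4$-gon carrying an additional marked point at the projection of $pt$; over this region the fibration is locally trivial, so a product almost complex structure makes all relevant disks regular and reduces the count to fiber data associated with the Morse critical point $e$ on $F_0\cap F'$ and with the evaluation at $[pt]$ on $S^n$. The main obstacle is the careful sign analysis from spin structures, Morse--Bott orientations, and the placement of the bounding cochain insertion, which must be tracked through this base-picture identification in order to extract the sign in $\psi_u=\pm\alpha\,\psi_e$.
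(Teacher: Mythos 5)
Your proposal is correct and follows essentially the same strategy as the paper: part~(1) via a one-generator chain model (the paper's ``first'' argument, using a representative of $F$ meeting $S^n$ in one point), and part~(2) by projecting the relevant pearly-tree configurations to the base of $\pi_n$, where the same triangular region supports both the $\mu^2(\cdot,e)$ count and the $\mu^3(\alpha[pt],\cdot,u)$ count (the $S^{n-1}$-family of lifts being rigidified either by the minimum constraint from $e$ or by the $[pt]$ insertion). The only small imprecision is calling the $\mu^3$ configuration a ``$4$-gon'': in the Morse--Bott formalism the $[pt]$ input is attached by a flow line on $S^n$ rather than a fourth puncture, so the base picture remains a triangle; and the ``only $l=1$ contributes'' claim should be justified on cohomology (where the $l=0$ term lands on an exact chain in the Morse--Bott model) rather than by a pure moduli-dimension argument.
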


\begin{proof}
As we saw, in the Lefschetz fibration description $\pi_n : X_n\to \C$ of $T^*S^n$ the zero section $S^n$ is the Lagrangian lift of the interval $[-1,1] \subset \C$.

For part \eqref{HF odd}, we can replace a cotangent fiber $F$ with its Hamiltonian-isotopic Lagrangians $F_0$ and $F'$, as in the previous section. Recall that these are lifts of paths out of the critical values that intersect the interval $[-1,1] \subset \C$ transversely and only at one of the endpoints of the interval. Then, $CF^*(F_0,(S^n, \alpha[pt]);\K_R)$ has a single generator in degree 0, and the result follows. The same is true replacing $F_0$ with $F'$. 

Let us give an alternative argument, with an eye towards part \eqref{u act on Sn odd}. This time, let $F_0$ and $F'$ be lifts of paths that intersect the interior of $[-1,1]$, as in Figure \ref{HF(F,Sn)_fig}. We start with $F_0$. The chain complex $CF^*(F_0,(S^n, \alpha[pt]);\K_R)$ now has generators $x,y,z$ in degrees $-n$, $1-n$ and 0, respectively ($y$ is the maximum and $z$ the minimum of an auxiliary Morse function on the component of $S^n \cap F_0$ that is diffeomorphic to $S^{n-1}$), see Figure \ref{HF(F,Sn)_fig}. The fact that $\partial x$ is of the form $\pm T^A y$, where $A$ is the $\sigma$-area of the lightly shaded bigon (recall the definition of $\sigma$ in Section \ref{S:construct Lagrangians}), follows from the fact that the algebraic count of lifts of the shaded strip is $\pm 1$. That can be seen using the invariance under compactly supported Hamiltonian isotopies in $\C^n$ of $HF^*(\R^n,i\R^n)$, which is of rank 1. It follows that the cohomology is of rank 1, generated by $z$. There is a similar argument for $F'$ instead of $F_0$, with $z'$ now being the maximum of an auxiliary Morse function on $S^{n-1}$.  

To prove \eqref{u act on Sn odd}, we use again the representation of $F_0$ and $F'$ in Figure \ref{HF(F,Sn)_fig}. The dark triangle in Figure \ref{HF(F,Sn)_fig} does not contain critical values of $\pi_n$, so the restriction of the 
Lefschetz fibration to that triangle is trivial. 
The triangle hence lifts to an $S^{n-1}$-family 
of holomorphic triangles with the appropriate Lagrangian boundary conditions. This family can be made rigid by using $e\in CF^*(F_0,F';\K_R)$ (represented by a minimum) as an input in  
$$
\psi_{e}^{(S^n,\alpha[pt])}(z') = \mu^2(z',e)= \pm T^B z,
$$
where $B$ is the $\sigma$-area of the dark triangle.  
Similarly, the family of lifted triangles can be rigidified by using the bounding cochain $\alpha [pt]$ as an input in 
$$
\psi_{u}^{(S^n,\alpha[pt])}(z') = \mu^3(\alpha [pt],z',u)=\pm T^B \alpha z.
$$
The result now follows.
\end{proof}

Consider now the case of {\bf even $n$}. Recall that we equip $S^n$ with the trivial rank 2 vector bundle of mixed degree $E=\K_R\oplus \K_R[1]$, and with bounding cochains of the form $b_{\alpha,\beta} = \begin{pmatrix}
             0 & \beta \\
             \alpha & 0 
            \end{pmatrix}_{[pt]}$, 
such that $\alpha,\beta \in \K_{R,0}$ and $[pt]\in H^n(S^n;\Z)$ is represented by the minimum of a perfect Morse function on $S^n$. Let $F_0, F'$ be as before. 

\begin{lemma} \phantomsection \label{HF(F,Sn) even}
\begin{enumerate}
\item There is an isomorphism \label{HF even}
$$
HF^*(F,(S^n, b_{\alpha,\beta});\K_R) \cong \K_R\oplus \K_R[1]. 
$$
\item \label{u act on Sn even}  Using the identification in Lemma \ref{u in F} of $e\in HF^{0}(F_0,F';\Z)$ with the class of a point in $S^{n-1}$, and of $u\in HF^{1-n}(F_0,F';\Z)$ with the fundamental class of $S^{n-1}$, we have  
$$\psi_{u e}^{(S^n,b_{\alpha,\beta})} = \pm \begin{pmatrix}
             0 & \beta \\
             \alpha & 0 
            \end{pmatrix} \, \psi_{e}^{(S^n,b_{\alpha,\beta})}.$$
\end{enumerate}
\end{lemma}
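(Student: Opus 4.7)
My overall approach will be to adapt the argument of Lemma \ref{HF(F,Sn) odd}, keeping the same Morse--Bott Lefschetz-fibration picture but carefully tracking the matrix structure introduced by the rank-$2$ bundle $E=\K_R\oplus\K_R[1]$ and the bounding cochain $b_{\alpha,\beta}$. The key observation is that the geometry of Lagrangians and of holomorphic triangles in the base $\C$ is identical to that of the odd case; only the coefficient data changes.

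For part (1), I will first observe that the undeformed complex $CF^*(F_0,(S^n,E))$ is the direct sum of two copies of $CF^*(F_0,S^n)$, one shifted by $[1]$, with generators $x_\pm,y_\pm,z_\pm$. The bigon between $x$ and $y$ still gives $\mu^1 x_\pm=\pm T^A y_\pm$, so the undeformed cohomology is $\K_R z_+\oplus \K_R z_-\cong \K_R\oplus\K_R[1]$. Next, I will show that the deformation of the differential by $b_{\alpha,\beta}$, namely $\sum_{l\geq 1}\mu^{1+l}(b,\ldots,b,-)$, does not affect the cohomology. Each such term contains the class $[pt]\in H^n(S^n;\K_R)$ and hence represents a piece of the module action of $HF^*(S^n,S^n)\cong H^*(S^n)$ on $HF^*(F_0,S^n)\cong\K_R$; since the latter is concentrated in degree $0$ while $[pt]$ has degree $n\geq 2$, this action is cohomologically zero. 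Filtering by energy (disk area) produces a spectral sequence whose $E_1$-page is $\K_R\oplus\K_R[1]$, and the vanishing just described forces $d_r=0$ for all $r\geq 1$, giving the claimed isomorphism.

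For part (2), I will reuse the geometric input of the odd case: the dark triangle in Figure \ref{HF(F,Sn)_fig} has trivial Lefschetz fibration over it and therefore lifts to an $S^{n-1}$-family of pseudoholomorphic triangles with the correct Lagrangian boundary conditions. Rigidifying this family by placing the constraint $e\in CF^*(F_0,F')$ (represented by a point on $F_0\cap F'\cong S^{n-1}$) at the fiber vertex yields $\mu^2(z',e)=\pm T^B z$, hence $\psi_e^{(S^n,b_{\alpha,\beta})}(z'_\pm)=\pm T^B z_\pm$. On the other hand, placing $u=[S^{n-1}]$ at the same vertex leaves one fiber dimension uncut; this extra dimension is exactly removed by inserting a single $[pt]$ on the $S^n$-boundary, provided by the bounding cochain $b$. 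The area contribution is the same $T^B$, but the matrix part of $b$ swaps $\pm$ bundle components and introduces the factor $\begin{pmatrix}0&\beta\\\alpha&0\end{pmatrix}$, giving $\psi_{ue}^{(S^n,b_{\alpha,\beta})}(z'_\pm)=\pm T^B\begin{pmatrix}0&\beta\\\alpha&0\end{pmatrix}z_\pm$ and hence the claimed identity.

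The main obstacle I anticipate is in part (1): making the spectral sequence argument rigorous in the Morse--Bott Novikov setting, and in particular verifying that the filtration by energy is compatible with pearly trees carrying multiple $b$-insertions and that the higher differentials are genuinely governed by the $H^*(S^n)$-module action on $HF^*(F_0,S^n)$. A secondary, more bookkeeping issue is matching signs in part (2) once spin structures on $S^n$ and conventions for the matrix action on $E$ are fixed; both ambiguities are absorbed by the overall $\pm$ in the statement, but the specific matrix shape $\begin{pmatrix}0&\beta\\\alpha&0\end{pmatrix}$ must fall out consistently from the chosen conventions.
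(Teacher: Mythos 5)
Your argument for part~(2) is essentially identical to the paper's: lift the dark triangle in Figure~\ref{HF(F,Sn)_fig}, rigidify once with $e$ to get $\psi_e(v_i')=\pm T^B v_i$, and once by inserting the bounding cochain $b_{\alpha,\beta}$ as a $\mu^3$-input to get the swapped, $\alpha/\beta$-weighted answer. No difference there.

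For part~(1), the paper (like the odd case) uses the model where $F_0$ meets $S^n$ transversely in a single point, so $CF^*(F_0,(S^n,b_{\alpha,\beta}))$ has the two obvious generators $v_0,v_1$ sitting in degrees $0$ and $-1$; the point is that the bounding-cochain-deformed differential $\sum_l \mu^{1+l}(b,\ldots,b,-)$ automatically vanishes by a $\Z$-graded degree count, since each $b$-insertion carries $[pt]\in H^n(S^n)$ with $n\geq 2$, pushing the image out of the degrees $\{0,-1\}$ in which the complex is supported (and $S^n$, being exact, bounds no non-constant disks that could change this). You instead work in the three-generator picture and set up an energy spectral sequence whose $E_1$-page is $\K_R\oplus\K_R[1]$, then argue the higher pages vanish by the $H^*(S^n)$-module-action degree count. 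This is a valid route and yields the same result, but it is more machinery than the paper uses, and the concern you flag at the end (whether the energy filtration is compatible with trees carrying multiple $b$-insertions) is precisely what the simpler single-point degree argument sidesteps. If you prefer to stay in the three-generator picture, you should simply observe that each $b$-insertion raises the $\Z$-graded degree by $\deg[pt]-1=n-1\geq 1$, which is already too much for the complex supported in a $1$-wide band of degrees, so the deformed differential is degreewise forced to agree with the undeformed one; no spectral sequence is needed.

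Also be a bit careful with the phrase ``the module action of $HF^*(S^n,S^n)$ on $HF^*(F_0,S^n)$'': the full deformation $\sum_{l\geq 1}\mu^{1+l}(b,\ldots,b,-)$ involves the higher $A_\infty$-module operations, not just the $\mu^2$ action, so if you retain the spectral-sequence route you need the degree bound for all $l$, not just $l=1$. The degree count above handles all $l$ uniformly.
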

\begin{proof}
The proof of \eqref{HF even} is similar to Lemma \ref{HF(F,Sn) odd}. One can again replace $F$ with either $F_0$ or $F'$ as in Figure \ref{HF(F,Sn)_fig}. We obtain a $\K_R$-basis $v_0,v_1$ for $HF^*(F_0,(S^n, b_{\alpha,\beta});\K_R)$, where $v_0,v_1$ is the standard basis for the fiber of $E=\K_R\oplus \K_R[1]$ at $z$ (the fiber minimum) indicated in Figure \ref{HF(F,Sn)_fig}. Similarly, we denote by $v'_0, v'_1$ the analogous basis for $HF^*(F',(S^n, b_{\alpha,\beta});\K_R)$, with $z$ replaced by $z'$ (the fiber maximum) in Figure \ref{HF(F,Sn)_fig}.

The result in \eqref{u act on Sn even} follows again from the study of lifts of the dark triangle in Figure \ref{HF(F,Sn)_fig}. Once more, the lifts of the triangle can be rigidified either by taking $e$ as an input in $\mu^2$ or by inputing the bounding cochain $b_{\alpha,\beta}$ in $\mu^3$. Taking bases $v_i$ and $v'_i$ above, we get
$$
\psi_{e}^{(S^n,b_{\alpha,\beta})}(v_i') = \mu^2(v_i',e)= \pm T^B v_i,
$$
for $i = 0, 1$, where $B$ is the $\sigma$-area of the dark triangle. We also get 
$$
\psi_{u}^{(S^n,b_{\alpha,\beta})}(v_0') = \mu^3(b_\alpha,v_0',u)=\pm T^B \alpha v_1
$$
and 
$$
\psi_{u}^{(S^n,b_{\alpha,\beta})}(v_1') = \mu^3(b_{\alpha,\beta},v_1',u)=\pm T^B \beta v_0,
$$
as wanted.
\end{proof}

\begin{figure}
  \begin{center}
    \def\svgwidth{0.4\textwidth}
    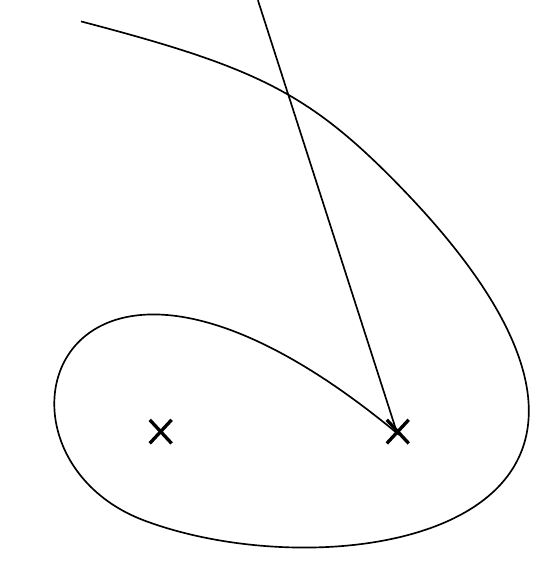
  \end{center}
  \caption{$L_\tau$ as a cone on morphisms between $F_0$ and $F_1$}
  \label{cone_fig}
\end{figure}

We now consider the Lagrangians $L_\tau$, which are diffeomorphic to $S^1\times S^{n-1}$. Let $U\in U_{\K_R}^*$ be an invertible unitary element in the Novikov field and take 
$\alpha := T^{-2(n-1)\tau} U^{-1}\in \K_R \setminus \K_{R,0}$. 
If $n>2$, write $L_\alpha$ for the Lagrangian $L_\tau$ equipped with the unitary local system $\xi$ in the trivial rank 1 $\K_R$-bundle over $L_\tau$, such that $U$ is the holonomy of $\xi$ along a loop that projects in degree 1 to the curve $C_\tau$ (recall that we think of $C_\tau$ as the curve $C$ in Figure \ref{F_w_fig}). 
If $n=2$, recall that we picked a basis $h_1,h_2$ for $H_1(L_\tau;\Z)$ in Lemma \ref{L:disk potential L_C}, to write the disk potential of $L_\tau$. The curve $h_1$ projects in degree 1 to $C_\tau$ and $h_2$ is a fiber of $\pi|_{L_\tau}$. In Remark \ref{R:crit pts of potentials}, we observed that the Floer cohomology of $(L_\tau,\xi)$ is non-trivial precisely when $\xi$ is a local system with holonomy $-1$ around $h_2$. Write $L_\alpha$ for $(L_\tau,\xi)$, such that the holonomy of $\xi$ is $U$ around $h_1$ and $-1$ around $h_2$.

If the $\sigma$-areas of the two shaded regions in Figure \ref{cone_fig} are the same, then the figure suggests that $L_\tau$ should be equivalent to surgery on morphisms supported on the two connected components of the intersection $F_1\cap F_0 = \{*\}\cup S^{n-1}$. Recall that surgery on an intersection point of two Lagrangians corresponds in the Fukaya category to taking the cone on the morphism given by the intersection point, see Chapter 10 of \cite{FOOO}. This motivates the following result.

\begin{lemma} \label{Ltau is cone}
For the appropriate choice of spin structure, $L_\alpha$ is isomorphic in $\mathcal W_{\mon}^{\Z/2\Z}(T^*S^n;\K_R)$ to $\Cone( u^2 - \alpha e)$, where $ u^2 - \alpha e$ is thought of as a morphism in $HW^{\rm even}(F,F)$.  In particular,  
$$
HF^*(F, L_\alpha;\K_R) \cong H^*(S^{n-1};\K_R),
$$ 
as $\Z/2\Z$-graded free $\K_R$-modules.
\end{lemma}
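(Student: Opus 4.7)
The plan is to realize $L_\alpha$ as a mapping cone in $\mathcal{W}_{\mon}^{\Z/2\Z}(T^*S^n; \K_R)$ via a Lagrangian surgery, and then to derive the Floer cohomology from the induced long exact sequence. A Hamiltonian isotopy of the curve $C_\tau$ collapsing the two shaded bigons in Figure \ref{cone_fig} exhibits $L_\tau$ as a Lagrangian surgery of $F_0$ and $F_1$ along their two clean intersection components (each diffeomorphic to $S^{n-1}$). By the surgery-as-cone identification of \cite{FOOO}*{Chapter 10}, this gives $L_\alpha \cong \Cone(\phi)$ for some $\phi \in HW^{\even}(F,F)$ represented in the Morse--Bott model for $CF^*(F_1,F_0)$ by chains supported on these two intersection components. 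Its use in the monotone setting is legitimate because the disk potential of $L_\alpha$ vanishes on its critical locus (Lemma \ref{L:vanishing potential} and Remark \ref{R:crit pts of potentials}), ruling out obstructing disk bubbles.

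\textbf{Identifying the morphism.} By the Morse--Bott analogue of Lemma \ref{u in F}, the minimum on the outer (unwrapped) component represents the unit $e \in HW^0(F,F)$, while the minimum on the inner (doubly-wrapped) component represents $u^2 \in HW^{2(1-n)}(F,F)$. Hence $\phi = c_1 u^2 + c_2 e$ with $c_i \in \K_R^\times$; after rescaling we may take $c_1 = 1$. The coefficient $c_2 = -\alpha$ is produced by three contributions: the difference in $\sigma$-area between Floer strips connecting the two components, which by Lemma \ref{Maslov L_C} equals $2(n-1)\tau$ and contributes a factor $T^{-2(n-1)\tau}$; the holonomy $U$ of the local system on $L_\alpha$ along a loop that projects to $C_\tau$ in degree $1$, contributing a factor $U^{-1}$; and a sign determined by the spin structure on $L_\tau$ that supplies the minus sign. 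In the case $n=2$, the holonomy $-1$ around the fiber class $h_2$ mandated by Remark \ref{R:crit pts of potentials} is absorbed into the spin-structure choice.

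\textbf{Computing $HF^*(F, L_\alpha)$.} Applying $HF^*(F, -)$ to the exact triangle $F \xrightarrow{u^2 - \alpha e} F \to L_\alpha \to F[1]$ produces the long exact sequence
\begin{equation*}
\cdots \to HW^*(F, F) \xrightarrow{\cdot(u^2 - \alpha)} HW^*(F, F) \to HF^*(F, L_\alpha) \to HW^{*+1}(F, F) \to \cdots
\end{equation*}
Since $HW^*(F, F) \cong \K_R[u]$ is an integral domain and $u^2 - \alpha \neq 0$, multiplication by $u^2 - \alpha$ is injective in every $\Z/2\Z$-degree. The connecting maps therefore vanish, and $HF^*(F, L_\alpha) \cong \K_R[u]/(u^2 - \alpha)$ is free of rank $2$ over $\K_R$ with basis $\{1, u\}$ lying in $\Z/2\Z$-degrees $0$ and $1-n \equiv n-1 \pmod 2$, matching $H^*(S^{n-1}; \K_R)$.

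\textbf{Main obstacle.} The central difficulty is pinning down $\phi$ in the second step: this requires careful bookkeeping of symplectic areas, local-system holonomy, and spin-structure signs through the surgery, and in particular a consistent identification of the two clean intersection components with the generators $e$ and $u^2$ via the Morse--Bott grading conventions. Justifying the surgery-as-cone identification in the monotone wrapped category also demands care, but this is controlled by the disk-potential vanishing noted above.
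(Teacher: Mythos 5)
Your proposal takes a genuinely different route from the paper: you try to establish the cone identification directly via Lagrangian surgery and then derive $HF^*(F,L_\alpha)$ from the resulting long exact sequence. The paper runs the argument in the opposite direction: it first computes the $A$-module structure of $HF^*(F,L_\alpha)$ by hand (identifying the actions $\psi_e$ and $\psi_u$ from explicit triangle counts in Figure \ref{cone1_fig}, and then proving nonvanishing of the differential via a Leibniz-rule argument), and only then invokes the general principle that $HF^*(F,\Cone(p(u)))$ is the module on which $u$ acts by the transposed companion matrix of $p$. Combined with the full-faithfulness of the Yoneda embedding (Proposition \ref{C:Yoneda ff}, which rests on the formality results of Section \ref{S:Formality algebra}), this forces $L_\alpha \cong \Cone(u^2 - \alpha e)$. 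The paper explicitly labels the surgery picture as motivation only, and with good reason.

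There are concrete gaps in your proposal. First, the intersection $F_0 \cap F_1$ is $\{*\} \cup S^{n-1}$, a transverse point together with a clean $S^{n-1}$, not two copies of $S^{n-1}$ as you write; in particular, the surgery you want to perform is along a clean intersection, and the FOOO Chapter 10 surgery-as-cone statement as usually cited is for transverse intersection points. Establishing a cone identification from clean-intersection surgery in the monotone setting would require a substantial additional argument, which is precisely what the paper sidesteps. Second, your derivation of the coefficient $c_2 = -\alpha$ is asserted rather than proved: the area, holonomy, and sign bookkeeping are exactly what the paper's explicit triangle counts carry out, and without them the identification of $\phi$ is not established. Third, your final step is fragile precisely where the paper's proof does hard work: you deduce nonvanishing of $HF^*(F,L_\alpha)$ from the long exact sequence, but that sequence only exists once the cone identification is rigorously in place; the paper instead proves $HF^*(F,L_\alpha) \neq 0$ directly (this is where the parity of $n$ enters and the disk-potential vanishing and Leibniz rule are used), and only afterwards upgrades the computed module to the cone statement. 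If you want to salvage your route, you would need an independent proof of the surgery-as-cone identification for the mixed transverse/clean intersection $\{*\} \cup S^{n-1}$, together with a careful determination of the surgery morphism including its $S^{n-1}$-Morse-theoretic component, which is considerably harder than the paper's module-theoretic detour.
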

\begin{figure}
  \begin{center}
    \def\svgwidth{0.8\textwidth}
    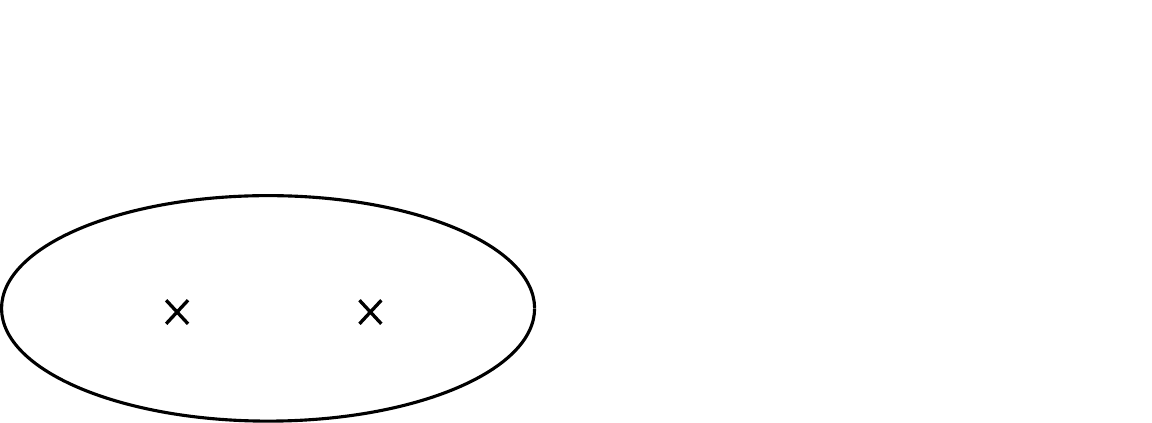
  \end{center}
  \caption{The action of $u$ on $L_\alpha$}
  \label{cone1_fig}
\end{figure}
\begin{proof}
Given a monic polynomial $p(u) = u^d  + a_{d-1} u^{d-1}+ \ldots +a_0$ in $\K_R[u] \cong HW^*(F,F;\K_R)$, the object $\Cone(p(u))$ is such that $HF^*(F,\Cone(p(u));\K_R)$ is a free $\K_R$-module of rank $d$. The right action of $u$ on $HF^*(F,\Cone(p(u));\K_R)$ is by the transpose of the companion matrix to $p(u)$:
\begin{equation*} 
\begin{pmatrix}
0 & 1 & 0 & \cdots & 0 \\
0 & 0 & 1 & \cdots & 0\\
\vdots & \vdots & \vdots & \ddots & \vdots \\
-a_0 & -a_1 & -a_2 & \cdots & -a_{d-1} 
\end{pmatrix}.
\end{equation*}
Hence, we want to show that $HF^*(F,L_\alpha;\K_R)$ is a free $\K_R$-module of rank 2, where $u$ acts on the right as 
\begin{equation} \label{eq: u act on cone1}
\begin{pmatrix} 
0 & 1 \\
\alpha & 0
\end{pmatrix}
\end{equation}

Let us represent the Hamiltonian isotopy class of $F$ by $F_0$ and by $F'$, as before. In Figure \ref{cone1_fig}, we see that $F_0 \cap L_\tau $ is diffeomorphic to $S^{n-1}$. 
Choosing an auxiliary perfect Morse function on this sphere, we get a chain model for $CF^*(F_0,L_\alpha;\K_R)$ whose generators are the minimum $m$ and the maximum $M$. We can similarly get generators $m', M'$ for $CF^*(F',L_\alpha;\K_R)$. We will first work with coefficients in $\K_\Z$, and then argue that the case of $\K_\C$-coefficients follows. In particular, we will begin by assuming that $U\in U_{\mathbb K_\Z}^*$ and then consider the more general case of $U\in U_{\mathbb K_\C}^* = U_{\mathbb K}^*$.

Recall Lemma \ref{u in F}. The element $e\in CF^0(F_0,F';\K_\Z)$ (the minimum in its $S^{n-1}$ fiber) acts on $CF^*(F',L_\alpha,\K_\Z)$ by 
$$
\psi_{e}^{L_\alpha}(M') = \mu^2(M',e)= \pm T^A m,
$$
by taking lifts of the shaded triangle on the left in Figure \ref{cone1_fig}. We denote by $A$ the $\sigma$-area of this triangle. Note that the Lefschetz fibration is trivial over the triangle, so it has an $S^{n-1}$-family of holomorphic lifts. The remaining contributions to right multiplication by $e$ must come from lifts of the shaded triangle on the right in Figure \ref{cone1_fig}.  Since $e$ represents a cohomological unit in $HW^*(F,F;\K_\Z)$, it acts by an isomorphism over $\K_\Z$ and we conclude that the lifts of that triangle contribute to   
$$
\psi_{e}^{L_\alpha}(m') = \mu^2(m',e)= \pm T^B U M, 
$$
where $B$ is the $\sigma$-area of that right triangle in the plane. Note that these lifted triangles pick up holonomy $U$. 
The same holomorphic triangles determine the action of $e$ over $\K=\K_\C$. We conclude that $\psi_{e}^{L_\alpha}$ is given by the same formulas over $\K$ as over $\K_\Z$, even when we take $U\in U_{\mathbb K}^*$. 

The element $u\in CF^{1-n}(F_0,F';\K_R)$ is represented by the maximum in the same $S^{n-1}$ fiber as $e$, and acts on $CF^*(F',L_\alpha;\K_R)$ by 
$$
\psi_{u}^{L_\alpha}(M') = \mu^2(M',u)= \pm T^A M,
$$
and
$$
\psi_{u}^{L_\alpha}(m') = \mu^2(m',u)= \pm T^A m.
$$
In both cases, this corresponds to lifting the triangle on the left in Figure \ref{cone1_fig}.

Observe that $B = A + 2(n-1)\tau$, so we can write 
\begin{equation} \label{matrix alpha}
\psi_u^{L_\alpha} = \begin{pmatrix} 
0 & 1 \\
\pm \alpha & 0
\end{pmatrix} \psi_e^{L_\alpha}. 
\end{equation}
To get a positive sign in $\alpha$ as in \eqref{eq: u act on cone1}, we note that by changing the spin structure on $L_\tau$ we can change the sign of $\psi_{e}^{L_\alpha}(m')$ (which comes from lifting the shaded triangle on the right in Figure \ref{cone1_fig}). This has the desired has the effect of replacing $\alpha$ by $-\alpha$ in the matrix in \eqref{matrix alpha}. 

We still need to show that $HF^*(F,L_\alpha;\K_R)\neq 0$. We prove that $HF^*(F',L_\alpha;\K_R)\neq 0$. If $n$ is odd, then this is obvious, since the indices of the generators $m',M'$ have the same parity, so the differential is zero. Observe that the case $n=2$ is addressed in Remark \ref{R:crit pts of potentials}. For general even $n$, we write 
$$
\mu^1(m) = \kappa_1 M, \qquad \mu^1(M) = \kappa_2 m, \qquad \mu^1(m') = \kappa_1' M', \qquad \mu^1(M') = \kappa_2' m',
$$
for some $\kappa_1, \kappa_2, \kappa_1', \kappa_2' \in \K$. The Leibniz rule (and the fact that $\mu^1(u)=0$) yields
\begin{align*}
\mu^1(\mu^2(m', u)) &= \mu^2(\mu^1(m'), u) = \kappa_1' \mu^2(M', u) = \pm \kappa_1' T^A M \\
&= \pm \mu^1 (T^A m) = \pm T^A \kappa_1 M \Longrightarrow \kappa_1 = \pm \kappa_1'
\end{align*}
and 
\begin{align*}
\mu^1(\mu^2(m', e)) &= \mu^2(\mu^1(m'), e) = \kappa_1' \mu^2(M', e) = \pm \kappa_1' T^A m \\
&= \pm \mu^1 (T^B U M) = \pm T^B U \kappa_2 m \Longrightarrow T^{B-A} U \kappa_2 = \pm \kappa_1' = \pm \kappa_1.
\end{align*}
Therefore, 
$$
\mu^1\circ \mu^1(M) = \kappa_2 \mu^1(m) = \pm T^{B-A} U \kappa_2^2 M.
$$
But $\mu^1\circ \mu^1=0$, because $F$ and $L_\alpha$ both have vanishing disk potential. Since $T^{B-A} U\neq 0$, we conclude that $\kappa_2 = \kappa_1=\kappa_1'=0$. A similar argument shows that $\kappa_2'=0$, and implies that $HF^*(F',L_\alpha;\K_R)\neq 0$, as wanted.
\end{proof}

As we saw, Lemma \ref{Ltau is cone} can be rephrased as saying that $HF^*(F,L_\alpha;\K_R)$ is isomorphic to $\K_R^2$, if $n$ is odd, and to $\K_R\oplus \K_R[1]$, if $n$ is even, and that the action of $u$ is represented by the matrix \eqref{eq: u act on cone1}.
To relate this with the generation results for modules that will be discussed below, it is convenient to restrict our attention to $\K=\K_\C$, which is an algebraically closed field. Since the eigenvalues of the matrix \eqref{eq: u act on cone1} are  $\pm \sqrt{\alpha}$ (the two square roots of $\alpha$ in $\K$), we conclude the following.

\begin{corollary} \label{u act on Ltau}
If $n$ is odd, then $HF^*(F',L_\alpha;\K)$ and $HF^*(F_0,L_\alpha;\K)$ have bases in which $\psi_{u e}^{L_\alpha} = \begin{pmatrix}
 \sqrt{\alpha} & 0 \\
 0 & - \sqrt{\alpha}
 \end{pmatrix} \, \psi_{e}^{L_\alpha}$.
\end{corollary}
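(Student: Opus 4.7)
The plan is to deduce the corollary directly from Lemma \ref{Ltau is cone} by diagonalizing the companion matrix that encodes the action of $u$. Since $n$ is odd, Lemma \ref{Ltau is cone} identifies $L_\alpha$ with $\Cone(u^2 - \alpha e)$ in $\W^{\Z/2\Z}_{\mon}(T^*S^n;\K)$. Representing $F$ by the Hamiltonian-isotopic copies $F_0$ and $F'$, the proof of that lemma already exhibits bases $\{m', M'\}$ of $HF^*(F', L_\alpha;\K)$ and $\{m, M\}$ of $HF^*(F_0, L_\alpha;\K)$ in which $\psi_e^{L_\alpha}$ is represented by the identity, while $\psi_u^{L_\alpha}$ is represented by the transpose companion matrix
$$
C = \begin{pmatrix} 0 & 1 \\ \alpha & 0 \end{pmatrix}.
$$

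Next, I would observe that since $\K = \K_{\C}$ is algebraically closed and $\alpha \in \K^\ast$, the scalar $\alpha$ admits two distinct square roots $\pm \sqrt{\alpha} \in \K$, which are precisely the eigenvalues of $C$. Hence $C$ is diagonalizable: letting
$$
P = \begin{pmatrix} 1 & 1 \\ \sqrt{\alpha} & -\sqrt{\alpha} \end{pmatrix},
$$
a direct computation gives $P^{-1} C P = \begin{pmatrix} \sqrt{\alpha} & 0 \\ 0 & -\sqrt{\alpha} \end{pmatrix}$.

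To conclude, I would apply the change of basis $P$ simultaneously to $\{m, M\}$ on $HF^*(F_0, L_\alpha;\K)$ and to $\{m', M'\}$ on $HF^*(F', L_\alpha;\K)$. In the new bases the matrix of $\psi_e^{L_\alpha}$ is still the identity, while the matrix of $\psi_u^{L_\alpha}$ becomes $P^{-1} C P$, the diagonal matrix above. Since $e$ is the unit of $A_\K = \K[u]$, we have $ue = u$, and therefore $\psi_{ue}^{L_\alpha} = \begin{pmatrix} \sqrt{\alpha} & 0 \\ 0 & -\sqrt{\alpha} \end{pmatrix} \psi_e^{L_\alpha}$, as claimed. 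The real work has already been carried out in Lemma \ref{Ltau is cone}; the only content of the corollary is the eigenvalue calculation for $C$, so there is no substantive obstacle to the proof.
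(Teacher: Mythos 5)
Your proposal matches the paper's own reasoning, which simply observes that the matrix $\begin{pmatrix} 0 & 1 \\ \alpha & 0 \end{pmatrix}$ has the two distinct eigenvalues $\pm\sqrt{\alpha}$ in the algebraically closed field $\K$ and is therefore diagonalizable. One small inaccuracy: in the bases $\{m',M'\}$ and $\{m,M\}$ produced in the proof of Lemma~\ref{Ltau is cone}, $\psi_e^{L_\alpha}$ is \emph{not} the identity but an anti-diagonal matrix (it sends $M'\mapsto \pm T^A m$ and $m'\mapsto \pm T^B U M$); what the lemma actually establishes is the relation $\psi_u^{L_\alpha} = C\,\psi_e^{L_\alpha}$ with $C = \bigl(\begin{smallmatrix} 0 & 1 \\ \alpha & 0 \end{smallmatrix}\bigr)$. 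This does not affect your conclusion: replacing $\{m,M\}$ by $\{\psi_e(m'),\psi_e(M')\}$ does make $\psi_e$ the identity and $\psi_u$ equal to $C$, and then conjugating by $P$ as you do yields the diagonal form. So the argument is correct and the approach is the paper's; only the attribution of where the ``$\psi_e = \Id$'' bases come from needed the small adjustment.
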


We are now ready to prove the following result, up to Corollary \ref{S generate} below. 

\begin{theorem} \label{split generators Sn}
The category $\mmod_{pr}(A_{\K})$ is split-generated by the collection of right $A_{\K}$-modules 
\begin{itemize}
\item $
 \{HF^*(F,(S^n,\alpha[pt]);\K)\}_{0\leq \val(\alpha) \leq \infty} \cup \{HF^*(F, L_\alpha;\K)\}_{\val(\alpha) < 0}
 $, if $n$ is odd;
\item $
\{HF^*(F,S^n;\K)\}\cup  \{HF^*(F,(S^n,b_{\alpha,1});\K)\}_{0\leq \val(\alpha) <\infty} \cup \{HF^*(F, L_\alpha;\K)\}_{\val(\alpha) < 0}
 $, if $n$ is even.
\end{itemize}
\end{theorem}

\begin{proof}
In the $n$ odd case, if $\val(\alpha) \geq 0$, then Lemma \ref{HF(F,Sn) odd} 
implies that  
$$
HF^*(F_0,(S^n,\alpha[pt]);\K) \cong S_{\pm\alpha}
$$
as right $A_{\K}$-modules, where $S_\alpha$ is the 1-dimensional (over $\K$) right $A_{\K}$-module on which $u\in A_{\K}$ acts as multiplication by $\alpha$ (as in Lemma \ref{L:triangulated closure} below). 

If $\val(\alpha) < 0$, Lemma \ref{Ltau is cone} and Corollary \ref{u act on Ltau} imply that
$$
HF^*(F, L_\alpha;\K) \cong S_{\sqrt{\alpha}} \oplus S_{-\sqrt{\alpha}} 
$$
as right $A_{\K}$-modules. 

Corollary \ref{S generate} below now implies the result when $n$ is odd. The case of $n$ even is analogous, where this time we apply Lemma \ref{HF(F,Sn) even} instead of Lemma \ref{HF(F,Sn) odd} and Corollary \ref{S tilde generate} instead of Corollary \ref{S generate}.  
\end{proof}

The following is a version of Theorem \ref{generate F} from the Introduction. 

\begin{corollary} \label{generate Fuk Sn}
The category $\F^{\Z/2\Z}_{mon}(T^*S^n;\K)$ is split-generated by the collection of objects 
\begin{itemize}
\item $\{(S^n,\alpha[pt])\}_{0\leq \val(\alpha) \leq \infty} \cup \{L_\alpha\}_{\val(\alpha) < 0}$, when $n$ is odd;
\item $\{S^n\}\cup \{(S^n,b_{\alpha,1})\}_{0\leq \val(\alpha) <\infty} \cup \{L_\alpha\}_{\val(\alpha) < 0}$, when $n$ is even.
\end{itemize} 
\end{corollary}
\begin{proof}
This follows from Theorem \ref{split generators Sn} and Proposition \ref{C:Yoneda ff}. 
\end{proof}

\subsection{Computations in $T^*S^3$}

We now want to study how $u\in HW^*(F,F;\K_R)\cong \K_R[u]= A_{\K_R}$ acts on the tori $T^3_\tau$. Recall from Lemma \ref{L:disk potential T_C} that the disk potential of $T^3_\tau$ can be computed in a basis $h_1,h_2,h_3$ of $H_1(T^3_\tau;\Z)$, where $h_1$ is a loop projecting bijectively to the curve $C_\tau\subset \C\setminus \{\pm 1\}$ (that $T_\tau^3$ covers), while $h_2$ and $h_3$ are vanishing circles that project to points under the fibration. As observed in Remark \ref{R:crit pts of potentials}, the critical points of the disk potential that belong to $(U_{\K_R}^*)^3$ correspond to unitary local systems on $T^3_\tau$, whose holonomy around $h_1$ is arbitrary, and whose holonomy around each of $h_2$ and $h_3$ is $-1$.   

Given $U\in U_{\K_R}^*$, let $\alpha := T^{-2\tau} U^{-1} \in \K_R\setminus \K_{R,0}$ and denote by $T_\alpha$ the Lagrangian $T^3_\tau$ equipped with a unitary local system $\xi$ in the trivial rank 1 $\K_R$-bundle, whose holonomy around $h_1$ is $U$, and whose holonomies around $h_2$ and $h_3$ are $-1$. 

Given $a \in HF^*(N_1,N_0;\K_R)$ and $X\in \W^{\Z/2\Z}_{\mon}(T^*S^3;\K_R)$, define a map 
\begin{align*}
 \phi_a^X \colon HF^*(N_0,L;\K_R) & \to HF^*(N_1,L;\K_R) \\
 x & \mapsto \mu^2(x,a)
\end{align*}

\begin{figure}
  \begin{center}
    \def\svgwidth{0.5\textwidth}
    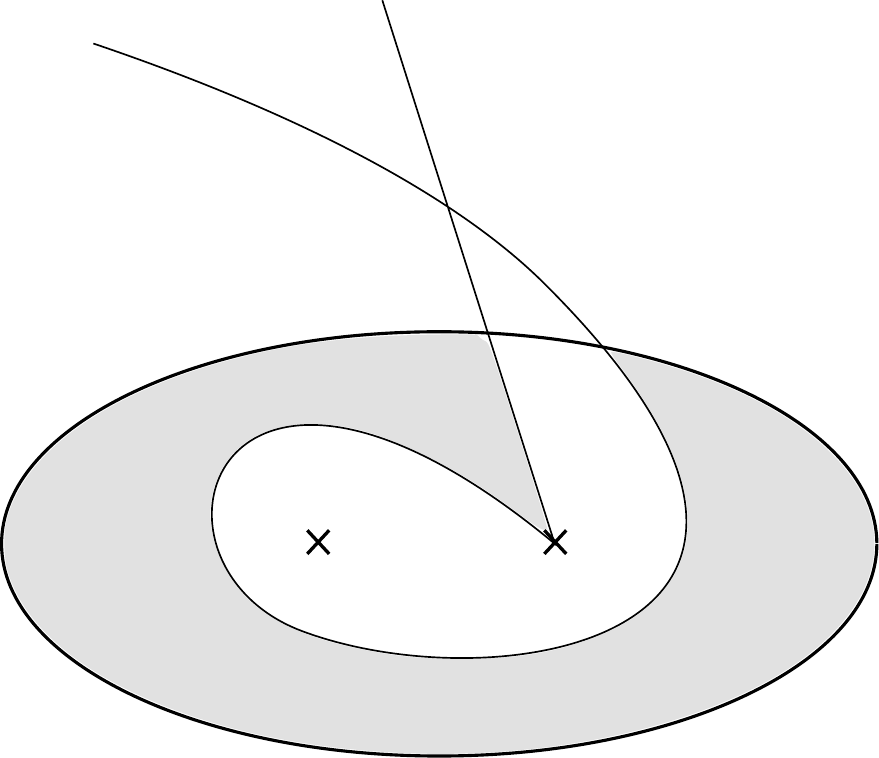
  \end{center}
  \caption{The action of $u$ on $T^3_\alpha$}
  \label{HF(N,L)_fig}
\end{figure}

\begin{lemma} \phantomsection \label{HF(N,T)}
For the appropriate choice of spin structure on $T_\alpha$,
\begin{enumerate}
\item there is an isomorphism
$$
HF^*(N,T_\alpha;\K) \cong H^*(T^2;\K),
$$ 
possibly with a degree shift;
\item using the identification in Lemma \ref{ue geometric} of $\pm e\in HF^{0}(N_1,N_0;\Z)$ with the fundamental class of $S^1$, and of $\pm u e_1 \pm u e_2\in HF^{-2}(N_1,N_0;\Z)$ with the fundamental class of $T^2$, we can pick bases so that 
$$
\phi_{\pm u e_1 \pm u e_2}^{T_\alpha} = 
\begin{pmatrix}
\pm \alpha & & & \\
0 & \pm \alpha & & \\
0 & 0 & \pm \alpha & \\
* & 0 & 0 & \pm \alpha 
\end{pmatrix}
 \, \phi_{e}^{T_\alpha}
$$
where the $4\times 4$ matrix above is lower triangular. 
\end{enumerate}
\end{lemma}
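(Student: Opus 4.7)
I will adapt the holomorphic-triangle argument of Lemma \ref{Ltau is cone} (together with Lemmas \ref{HF(F,Sn) odd}--\ref{HF(F,Sn) even}) from the Lefschetz fibration $\pi_n$ to the Morse--Bott fibration $z\colon X\to\C$, proving part (2) first and then deducing part (1). For part (2), replace $N$ by the Hamiltonian-isotopic $N_0, N_1$ of Figure \ref{HF(N,L)_fig}. The clean intersections are $N_0 \cap T_\alpha = T^2_C$ over $C$, $N_1 \cap T_\alpha = T^2_A \sqcup T^2_B$ over $A, B$, and (by \eqref{HF(N1,N0)} with $i=1$) $N_1 \cap N_0 = S^1_1 \sqcup T^2_p$, with $S^1_1$ over the shared endpoint $+1$ and $T^2_p$ over the transverse crossing of $\eta_0, \eta_1$ at some $p\in\C$. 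By Lemma \ref{ue geometric}, $e$ is represented by the fundamental class of $S^1_1$ and $ue$ by that of $T^2_p$, so $\phi_e^{T_\alpha}$ and $\phi_{ue}^{T_\alpha}$ are computed, respectively, from triangles projecting to $\{C,+1,X\}$ and $\{C,p,X\}$ in the base (for $X = A,B$). Since $z$ is regular over each such triangle, using a product complex structure (as in the remark preceding Lemma \ref{ue geometric}) makes the lifts manifestly regular: each descends to an $S^1$- or $T^2$-family of pseudoholomorphic triangles rigidified by the fundamental-class condition at the $N_1 \cap N_0$ corner, yielding a single signed count.

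Each such triangle contributes $\pm T^a U^k$, with $a$ the $\sigma'$-area of the base triangle and $U^k$ the holonomy of $\xi$ along its $T_\alpha$-boundary. The central geometric computation --- which I expect to be the main obstacle --- is to identify, for each $X \in \{A,B\}$, the signed $\sigma'$-area difference of the $e$- and $ue$-triangles as $\int_{\Omega_{C_\tau}}\sigma' = 2\tau$, by an application of Stokes' theorem analogous to Lemma \ref{L:sigma area}, and to identify the $h_1$-winding difference of the corresponding $T_\alpha$-boundaries as $1$, yielding a holonomy ratio of $U$. Granting this, each entry of $\phi_{ue}^{T_\alpha}$ will equal $T^{-2\tau} U^{-1} = \alpha$ times the corresponding entry of $\phi_e^{T_\alpha}$; any residual sign is absorbed by the $\Spin$ structure on $T_\alpha$, as in Lemma \ref{Ltau is cone}.

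For part (1), since $e$ is the unit in $HW^*(N,N;\K)$, the map $\phi_e^{T_\alpha}$ is the continuation isomorphism $HF^*(N_0, T_\alpha; \K) \xrightarrow{\cong} HF^*(N_1, T_\alpha; \K)$, so it suffices to identify $HF^*(N_0, T_\alpha; \K)$. The Morse--Bott cochain complex for a perfect Morse function on $T^2_C$ has underlying graded vector space $H^*(T^2; \K)$ up to a $\Z/2\Z$-shift, with trivial Morse differential. The residual pearly corrections to $\mu^1$ come from Maslov-$2$ disks on $T_\alpha$ (the exactness of $N_0$ forbids disks on $N_0$), and, by a Leibniz-rule argument as at the end of the proof of Lemma \ref{Ltau is cone}, their signed count with local-system weights is controlled by the gradient of $W_{T_\alpha}$ at $\xi$. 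Since $\xi$ is a critical point of $W_{T_\alpha}$ by Remark \ref{R:crit pts of potentials}, this gradient vanishes, giving $\mu^1 = 0$ and hence $HF^*(N, T_\alpha; \K) \cong H^*(T^2; \K)$ up to a degree shift.
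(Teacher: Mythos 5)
Your proof of part (2) follows essentially the same triangle-counting strategy as the paper's, but you leave the decisive step as a declared ``main obstacle'' rather than carrying it out: the paper resolves the bookkeeping not by identifying a naive ``area difference'' of two triangles as $2\tau$, but by first computing the $\mathcal{G}_1$-image of $e$ and $ue$ (using explicitly chosen primitives for the Liouville form on $N_0,N_1$, which contributes the factor $T^{-A-C}$), and only then multiplying by the triangle areas and holonomy. The cancellation $T^{-A-C}\cdot T^A = T^{-C}$ together with $B+C=2\tau$ is exactly how the $\alpha = T^{-2\tau}U^{-1}$ appears; your ``signed area difference equals $\int_{\Omega_{C_\tau}}\sigma'$'' elides this and would need to be unpacked. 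You also misread the intersection geometry: $N_1\cap T^3_\tau$ is a \emph{single} $T^2$ (the text notes $CF^*(N_i,T_\alpha;\K)\cong H^*(T^2;\K)$ for each $i$), not $T^2_A\sqcup T^2_B$; the letters $A$, $B$, $C$ in Figure~\ref{HF(N,L)_fig} label areas of regions, not intersection components.

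For part (1) you take a genuinely different route, and there is a real gap. The paper argues indirectly by rank: since $\xi$ is a critical point of $W_{T^3_\tau}$, $HF^*(T_\alpha,T_\alpha;\K)\cong H^*(T^3;\K)$ has rank $8$; Proposition~\ref{HW(N)} gives $HF^*(N,T_\alpha)\cong M\oplus M[1]$ with $M=HF^*(F,T_\alpha)$; and the full faithfulness of the Yoneda functor forces $\Ext^*_{\K[u]}(M,M)$ to have rank $8$, which rules out $\dim_\K M=1$ (that would give rank $2$, by the analogue of \eqref{ExtSaSa}). Hence $M$ has rank $2$, $CF^*(N_0,T_\alpha)$ has no room for a nonzero differential, and the identification with $H^*(T^2;\K)$ follows. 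You instead try to show $\mu^1=0$ directly on $CF^*(N_0,T_\alpha)$ by asserting that the only non-Morse contributions are disk bubbles on $T_\alpha$ and that these are ``controlled by the gradient of $W_{T_\alpha}$.'' Neither claim is justified: exactness of $N_0$ forbids non-constant disks \emph{on $N_0$}, but it does not forbid genuine Floer strips between $N_0$ and $T_\alpha$ with both ends on the clean intersection $T^2$ and positive symplectic area, and these are a priori part of the Morse--Bott differential. Moreover the statement that the pearly correction to $\mu^1$ on $CF^*(N_0,T_\alpha)$ is given by derivatives of $W_{T_\alpha}$ is a theorem about the \emph{self}-Floer complex $CF^*(T_\alpha,T_\alpha)$; it does not transfer to the mixed pair without further argument. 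The invocation of the Leibniz-rule trick from the end of the proof of Lemma~\ref{Ltau is cone} is also circular as written, since that trick presupposes knowledge of the $e$- and $u$-actions at the chain level, which for $T_\alpha$ is exactly part (2) --- and you would then additionally need a structural constraint akin to $\kappa_1=\pm\kappa_1'$ to close the argument on a rank-$4$ complex. The rank-counting route in the paper sidesteps all of this and is the cleaner path.
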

\begin{proof}
We begin with (1). By Remark \ref{R:crit pts of potentials}, $T_\alpha$ corresponds to a critical point of the disk potential of the torus $T_\tau^3$. Hence, $HF^*(T_\alpha,T_\alpha;\K)\cong H^*(T^3;\K)$ has rank 8. Observe that for the Lagrangian lifts $N_i$ of the paths $\eta_i$ in Figure \ref{F_w_fig}, each of the graded $\K$-vector spaces $CF^*(N_i,T_\alpha;\K)$ is isomorphic to $H^*(T^2;\K)$. This has rank 4, so the rank of $HF^*(N_i,T_\alpha;\K)$ can only be 0, 2 or 4. 

On the other hand, by Proposition \ref{HW(N)}, 
$$HF^*(N_i,T_\alpha,\K) \cong HF^*(F,T_\alpha;\K)\oplus HF^*(F,T_\alpha;\K)[1].$$
Since $F$ split-generates the monotone wrapped Fukaya category and $T_\alpha$ is a non-trivial object, we conclude that the rank of $HF^*(F,T_\alpha;\K)$ is 1 or 2. Denote by $M$ this $\K[u]$-module. The fully faithfulness of the Yoneda embedding implies that 
$$
HF^*(T_\alpha, T_\alpha;\K) \cong \Ext_{\K[u]}^*(M,M)
$$
and the rank of the right side cannot be 8 if $M$ has rank 1, since the space of endomorphisms of a skyscraper sheaf in the derived category of a smooth curve has rank 2  (see Lemma \ref{lemma: ExtSaSa} below).  We conclude that $M$ has rank 2.%
\footnote{The referee suggested the following alternative argument for calculating that the rank of $M=HF^*(F,T_\alpha;\K)$ is 2, avoiding the computation of $\Ext$. If one thinks of the Lagrangian $T_\tau^3$ as coming from surgery on two cleanly intersecting Lagrangian spheres, as explained in Remark \ref{rmk:flux}, then it is clear that $T_\tau^3$ intersects a cotangent fiber $T_p^*S^3$ transversely in two points, for all $p$ contained in a non-empty open subset of $S^3$. Since $F$ is Hamiltonian-isotopic to a cotangent fiber, this implies that the rank of $M$ is 0 or 2. But since $T_\alpha$ has non-trivial Floer cohomology, the rank of $M$ cannot vanish.}
Hence,  $HF^*(N_i, T_\alpha;\K)$ has rank 4 and the differential on $CF^*(N_i, T_\alpha;\K)$ vanishes. This implies the statement in (1). 

To prove (2), we use Figure \ref{HF(N,L)_fig}. First, we need to determine the image of $\pm e$ and $\pm u e_1 \pm u e_2$ under the functor $\mathcal G_1 : \mathcal W^\Z(T^*S^3;\Z) \to \mathcal W^\Z(T^*S^3;\K_R)$ defined in Section \ref{S:W}. By Lemma \ref{ue geometric}, if we pick a perfect Morse function on $N_0\cap N_1\cong S^1\cup T^2$, we know that $\pm e$ is given by the maximum (which we denote by $x$) on the component $S^1$, and $\pm u e_1 \pm u e_2$ is given by the maximum (which we denote by $y$) on the component $T^2$. Pick primitives $f_i$ for the restriction of the Liouville form $\lambda$ on $T^*S^3$ to the $N_i$, $i\in \{0,1\}$. Assume that $f_0$ and $f_1$ both vanish at $x$. Then, $\mathcal G_1(\pm e) = T^{f_1(x)-f_0(x)} x = x \in CF^0(N_1,N_0;\K_R)$. Similarly, $\mathcal G_1(\pm u e_1 \pm u e_2) = T^{f_1(y)-f_0(y)} y \in CF^{-2}(N_1,N_0;\K_R)$. Under our assumptions, $f_1(y) - f_0(y)$ is the negative of the symplectic area of a strip between $N_1$ and $N_0$, obtained from lifting the union of the darkly shaded and the white triangles in Figure \ref{HF(N,L)_fig}. We denote this area by $A+C$ in the figure. We can conclude that $\pm u e_1 \pm u e_2$ is represented by $T^{-A-C} y$ in $CF^{-2}(N_1,N_0;\K_R)$.

The intersections $N_i \cap T^3_\tau$ are 2-tori, and we can pick perfect Morse functions on these 2-tori to specify bases for $HF^*(N_i,T_\alpha;\K)$. Denote the basis elements $p_0^i, p_1^i, p_2^i, p_3^i$, with Morse indices 0, 1, 1 and 2, respectively. Each of the shaded triangles in the figure lifts to a $T^2$-family of holomorphic triangles with suitable boundary conditions. Using the fundamental classes of $S^1$ and $T^2$ as inputs in $\mu^2$ does not constrain the $T^2$-families of holomorphic triangles. Therefore, for a suitable labeling of the generators, the contribution of $p_i^1$ to $\phi_{\pm u e_1 \pm u e_2}^{T_\alpha}(p_i^0)$ is 
\begin{equation} \label{formula with signs}
\pm T^{-A-C} T^A = \pm \alpha T^B U,
\end{equation}
since the sum of the $\sigma$-areas of the lightly shaded and white triangles in the figure is $B+C = 2\tau$. Similarly, the contribution of $p_i^1$ to $\phi_e^{T_\alpha}(p_i^0)$ is $\pm T^B U$. This justifies the diagonal terms in the $4\times 4$ matrix in (2). The possibly non-zero off-diagonal term in that matrix is given by the lifts of a large triangle in the base, given by the union of the white triangle and the two shaded triangles. Such lifts come in 4-dimensional families, which is why they contribute to an off-diagonal corner in the matrix.   
\end{proof}

\begin{remark}
The fibration is trivial over the darkly shaded triangle in Figure \ref{HF(N,L)_fig}, which is why it has a $T^2$-family of lifts. The fibration is not trivial over the lightly shaded triangle, because one of its vertices is a critical value. Nevertheless, this triangle still has a $T^2$-family of lifts. Note that one could also modify $N_0$ slightly, in a manner similar to what is done in Figure \ref{HF(F,Sn)_fig} for the proof of Lemma \ref{HF(F,Sn) odd}, so that the analogue of the lightly shaded triangle now includes no critical points, even at the corners. 
\end{remark}

We can now prove the following analogue of Theorem \ref{split generators Sn}.
\begin{theorem} \label{split generators S3}
 The collection of $A_{\K}$-modules 
 $$
 \{HF^*(F,(S^3,\alpha[pt]);\K)\}_{0\leq \val(\alpha) \leq \infty} \cup \{HF^*(F,T_\alpha;\K)\}_{\val(\alpha) < 0}
 $$ 
 split-generates the category $\mmod_{pr}(A_{\K})$. 
\end{theorem}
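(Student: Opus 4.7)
The plan is to mirror the proof of Theorem \ref{split generators Sn}, with the tori $T_\alpha$ playing the role that $L_\alpha$ did in the $(S^1 \times S^{n-1})_\tau$ case. For $\val(\alpha) \geq 0$, I would invoke Lemma \ref{HF(F,Sn) odd} (with $n=3$) verbatim: it identifies $HF^*(F,(S^3,\alpha[pt]);\K)$ with the $1$-dimensional $A_\K$-module $S_{\pm\alpha}$, so as $\alpha$ ranges over $\K_{R,0}$ we realize every $S_\beta$ with $\val(\beta) \geq 0$.

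For the remaining range $\val(\alpha) < 0$, I first use Proposition \ref{HW(N)} to obtain a quasi-isomorphism $N \simeq F \oplus F[1]$ in $\mathcal W_{\mon}^{\Z/2\Z}(T^*S^3;\K)$, which upon applying $HF^*(-,T_\alpha;\K)$ yields an isomorphism of $A_\K$-modules
$$
HF^*(N,T_\alpha;\K) \cong HF^*(F,T_\alpha;\K) \oplus HF^*(F,T_\alpha;\K)[1].
$$
Lemma \ref{HF(N,T)}(1) tells us the left-hand side has $\K$-dimension $4$, so $HF^*(F,T_\alpha;\K)$ has $\K$-dimension $2$. Next I would extract the $u$-action: since $e$ is the unit of $HW^*(N,N;\K)$, the map $\phi_e^{T_\alpha}$ is (up to continuation) the identity, so Lemma \ref{HF(N,T)}(2), $\phi_{ue}^{T_\alpha} = \alpha\,\phi_e^{T_\alpha}$, says that $u\in A_\K$ acts on $HF^*(N,T_\alpha;\K)$, and hence on $HF^*(F,T_\alpha;\K)$, by the scalar $\pm\alpha$.

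Because $u$ acts by a scalar, each of these modules is semisimple and splits as a direct sum of copies of $S_{\pm\alpha}$ and its parity-shift $S_{\pm\alpha}[1]$. In particular $S_{\pm\alpha}$ occurs as a direct summand of $HF^*(N,T_\alpha;\K)$ for every $\alpha$ with $\val(\alpha)<0$. Combining the two cases, we exhibit $S_\beta$ as a summand of some module in our collection for every $\beta \in \K$, so Corollary \ref{S generate} (which asserts that $\{S_\beta\}_{\beta\in\K}$ split-generates $\mmod_{pr}(A_\K)$) finishes the argument.

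The main contrast with Theorem \ref{split generators Sn} — and the step I expect to require the most care — is the fact that $T_\alpha$ corresponds to the \emph{single} eigenvalue $\alpha$ of $u$, as opposed to the pair $\pm\sqrt{\alpha}$ produced by $L_\alpha$ in the $(S^1\times S^{n-1})_\tau$ case. This is why the extra rank is supplied by taking Yoneda modules over $N$ rather than over $F$: the decomposition $N \simeq F \oplus F[1]$ is precisely what absorbs the extra rank $2$ of $HF^*(F,T_\alpha;\K)$ relative to the $1$-dimensional simples $S_\beta$, and guarantees a parity-matched $S_{\pm\alpha}$ summand regardless of how the two basis vectors of $HF^*(F,T_\alpha;\K)$ are distributed in the $\Z/2\Z$-grading.
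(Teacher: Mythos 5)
Your argument is correct and takes essentially the same route as the paper: both proofs rest on Lemma \ref{HF(N,T)} to identify $HF^*(N,T_\alpha;\K)$ as a direct sum of copies of $S_\alpha$ and $S_\alpha[1]$, and both then invoke Corollary \ref{S generate}. The detour through $N\simeq F\oplus F[1]$ and $HF^*(F,T_\alpha;\K)$ is harmless but redundant --- the paper reads the module structure $S_\alpha^2\oplus S_\alpha^2[1]$ directly off Lemma \ref{HF(N,T)} --- and the sign $\pm\alpha$ is already fixed to $\alpha$ by the choice of $\Spin$ structure in that lemma.
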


\begin{proof}
We just have to show that we can replace the objects supported on the collection of Lagrangians $\{(S^1\times S^2)_\tau\}_{\tau>0}$ by the objects supported on the collection $\{T^3_\tau\}_{\tau>0}$.

By Proposition \ref{HW(N)}, $N \cong F\oplus F[1]$. In the proof of Lemma \ref{HF(N,T)}, it is shown that the right $A_{\K}$-module $HF^*(F,T_\alpha;\K)$ is of rank 2. It follows from (2) in Lemma \ref{HF(N,T)} that this module must be isomorphic (up to degree shifts) to $S_{\alpha}\oplus S_{\alpha}$, $S_{-\alpha}\oplus S_{-\alpha}$, $S_{\alpha}\oplus S_{-\alpha}$ or $M^2_{\pm\alpha}$ (in the notation of Section \ref{sec: n odd} below). Lemma \ref{lemma: ExtSaSa} below implies that only the first two options (possibly with degree shifts is each of the summands) are compatible with the fact that $HF^*(T_\alpha, T_\alpha;\K) \cong \Ext_{\K[u]}^*(M,M)$ has rank 8. The result now follows from Corollary \ref{S generate}.
\end{proof}

\begin{corollary} \label{generate Fuk}
The category 
$\F^{\Z/2\Z}_{mon}(T^*S^3;\K)$ is split-generated by the collection of objects $\{(S^3,\alpha[pt])\}_{0\leq \val(\alpha) \leq \infty} \cup \{T_\alpha\}_{\val(\alpha) < 0}$.
\end{corollary}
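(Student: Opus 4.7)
The plan is to transport the split-generation statement of Theorem \ref{split generators S3} from the algebraic side back to the geometric side via the cohomologically full and faithful Yoneda embedding $Y_c : \F^{\Z/2\Z}_{\mon}(T^*S^3;\K) \to \mmod_{pr}(A_\K)$ provided by Proposition \ref{C:Yoneda ff}. The argument will be essentially the same as the one used for Corollary \ref{generate Fuk Sn} from Theorem \ref{split generators Sn}, only substituting the new collection of generating modules coming from the tori $T^3_\tau$ in place of the ones coming from $(S^1\times S^2)_\tau$.

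Concretely, the first step is to observe that since $Y_c$ is cohomologically full and faithful, it induces a quasi-equivalence onto its image inside $\mmod_{pr}(A_\K)$, and this property is preserved under passage to the split-closure. Thus a family of objects in $\F^{\Z/2\Z}_{\mon}(T^*S^3;\K)$ split-generates the target category if and only if the corresponding family of $A_\K$-modules split-generates the triangulated split-closure of the image of $Y_c$ inside $\mmod_{pr}(A_\K)$.

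The second step is to combine this with Theorem \ref{split generators S3}: that theorem says the modules
\[
 \{HF^*(F,(S^3,\alpha[pt]);\K)\}_{\val(\alpha) \geq 0} \cup \{HF^*(N,T_\alpha;\K)\}_{\val(\alpha) < 0}
\]
split-generate all of $\mmod_{pr}(A_\K)$, hence in particular split-generate the image of $Y_c$. Since $N \cong F \oplus F[1]$ in $\W^{\Z/2\Z}_{\mon}(T^*S^3;\K)$ by (the $\Z/2\Z$-graded reduction of) Proposition \ref{HW(N)}, the module $HF^*(N,T_\alpha;\K)$ decomposes as $HF^*(F,T_\alpha;\K)\oplus HF^*(F,T_\alpha;\K)[1]$, so a module generated by the $Y_c$-images of the tori $T_\alpha$ is the same as one generated by the $Y_c$-images of the objects $N$ paired with $T_\alpha$ through morphisms. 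Thus the $Y_c$-images of the proposed geometric generators split-generate the image of $Y_c$.

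Pulling this back through $Y_c$ and invoking cohomological full-faithfulness (which preserves the split-generation relation), we conclude that the objects $\{(S^3,\alpha[pt])\}_{\val(\alpha)\geq 0}\cup\{T_\alpha\}_{\val(\alpha)<0}$ split-generate $\F^{\Z/2\Z}_{\mon}(T^*S^3;\K)$, as claimed. There is no serious obstacle here; the only thing to be careful about is the bookkeeping of what ``split-generate'' means on both sides (i.e.\ working inside the split-closed triangulated envelope), and this is handled by the fact that cohomologically full and faithful $A_\infty$-functors induce quasi-equivalences on idempotent completions.
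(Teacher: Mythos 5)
Your proposal is correct and matches the paper's approach: the paper's own proof consists of a single sentence citing Theorem \ref{split generators S3} and Proposition \ref{C:Yoneda ff}, which is exactly the strategy of pulling back split-generation through the cohomologically full and faithful Yoneda functor $Y_c$. Your additional step addressing the passage from $HF^*(N,T_\alpha)$ to $HF^*(F,T_\alpha)$ via $N\cong F\oplus F[1]$ is a reasonable clarification that the paper leaves implicit.
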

\begin{proof}
This follows from Theorem \ref{split generators S3} and Proposition \ref{C:Yoneda ff}. 
\end{proof}

Now that we understand the $F$-modules associated to the Lagrangians $(S^1\times S^2)_\tau$ and $T^3_\tau$ in $T^*S^3$, we can also prove Theorem \ref{T:S1xS2 and T3}.

\begin{proof}[Proof of Theorem \ref{T:S1xS2 and T3}]
We wish to show that, if we fix $\tau, \tau'>0$, then $\tau = \tau'$ iff $(S^1\times S^2)_\tau$ and $T^3_{\tau'}$ can be equipped with local systems such that their Floer cohomology is non-trivial. 
Let $U\in U_\K^*$ and write $\alpha = T^{-4 \tau} U^{-1}$. Recall that the minimal Maslov number of $(S^1\times S^2)_\tau$ is 4, and that $(S^1\times S^2)_\alpha$ denotes $(S^1\times S^2)_\tau$ equipped with a rank 1 unitary local system with holonomy specified by $U$. The proof of Theorem \ref{split generators Sn} implies that $HF^*(F,(S^1\times S^2)_\alpha;\K) \cong S_{\sqrt{\alpha}} \oplus S_{-\sqrt{\alpha}}$, where $\sqrt{\alpha} = T^{-2\tau} (\sqrt{U})^{-1}$ for some square root $\sqrt{U}\in \K$ of $U$. Write also $\alpha' = T^{-4 \tau'} U^{-1}$ and $\sqrt{\alpha'} = T^{-2\tau'} (\sqrt{U})^{-1}$. The minimal Maslov number of $T^3_{\tau'}$ is 2, and $T_{\sqrt{\alpha'}}$ denotes $T^3_{\tau'}$ with a rank 1 unitary local system of holonomy specified by $\sqrt U$. The proof of Theorem \ref{split generators S3} implies that $HF^*(F, T_{\sqrt{\alpha'}};\K)$ is isomorphic either to $S_{\sqrt{\alpha'}} \oplus S_{\sqrt{\alpha'}}$ or to $S_{-\sqrt{\alpha'}} \oplus S_{-\sqrt{\alpha'}}$ (possibly with degree shifts in the summands). 

The result now follows from Proposition \ref{C:Yoneda ff} and Lemma \ref{lemma: ExtSaSa} below. 
\end{proof}

\section{Intrinsic formality of algebras and modules}

\label{S:Formality algebra}

Recall that $A_{\K}=HW^*(F,F;\K)$ is isomorphic to the polynomial algebra $\K[u]$, where $\deg(u)=1-n$ and $n\geq 2$. From this point on, we will always work over $\K$, and write $A$ instead of $A_\K$ to make the notation lighter. We want to show that $A$ is {\em intrinsically formal}, and will later prove an analogous result for certain types of $A$-modules. Intrinsic formality of $A$ means that if $\mathcal B$ is any $A_\infty$-algebra such that the algebra $H^*(\mathcal B)$ is isomorphic to $A$, then $\mathcal B$ is quasi-isomorphic to $A$ as $A_\infty$-algebras.

Denoting by $|A|$ the algebra $A$ where we {\em forget the grading}, we can define the Hochschild cohomology 
$HH^r(|A|,|A|)$ as the homology of $CC^r(|A|,|A|) := \Hom_\K(|A|^{\otimes r},|A|)$, for $r\geq 0$, with respect to the Hochschild differential, see for instance \cite{WeibelHA}. 

To keep track of the grading on $A$, one can define 
$$
 CC^{r}(A,A[s]) := \Hom^s_\K(A^{\otimes r},A), 
$$
which consists of graded homomorphisms that increase the degree by $s\in \Z$ (we continue to use the cohomological convention under which $A[s]$ is obtained by {\em subtracting} $s$ from all degrees in $A$), see \cite{SeidelThomas}*{Section 4b}. 
The Hochschild differential preserves $s$, so the $CC^{r}(A,A[s])$ are subcomplexes of $CC^r(|A|,|A|)$. For each $s$ we have a direct sum of chain complexes
$$
CC^*(|A|,|A|) = CC^{*}(A,A[s]) \oplus Q^{*,s}
$$
where $Q^{r,s} \subset CC^*(|A|,|A|)$ consists of those homomorphisms that have no term of degree $s$. 
One can identify $Q^{r,s}$ with the quotient $CC^r(|A|,|A|) / CC^{r}(A,A[s])$.
We can conclude that there are inclusions on cohomology
$$
HH^{r}(A,A[s]) \subset HH^r(|A|,|A|).
$$

\begin{remark}
Since $A$ is supported in infinitely many degrees, none of the inclusions 
$$\bigoplus_{s\in \Z} CC^{r}(A,A[s]) \subset CC^r(|A|,|A|) \subset \prod_{s\in \Z} CC^{r}(A,A[s])$$ 
is the identity. 
\end{remark}

By \cite{WeibelHA}*{Corollary 9.1.5}, 
$HH^*(|A|,|A|) \cong \Ext^*_{|A|^e}(|A|,|A|)$, where $|A|^e = |A|\otimes_\K |A|^{\op}$  
(this is isomorphic to $|A|\otimes_\K |A|$, since $|A|$ is commutative). 
Note that $\Ext_{|A|^e}(|A|,|A|)$ can be computed using any projective resolution of $|A|$ as an $|A|^e$-module. We use the {\em Koszul resolution} 
\begin{equation} \label{Koszul res}
0 \to |A|^e \stackrel{f}{\to} |A|^e \stackrel{g}{\to} |A| \to 0,
\end{equation}
where $f(a(u)\otimes b(u)) = a(u) u \otimes b(u) - a(u) \otimes u b(u)$ and $g(p(u),q(u)) = p(u) q(u)$.

The existence of this 2-step resolution implies that $HH^r(|A|,|A|) = 0$ if $r \geq 2$. 
Since $HH^{r}(A,A[s]) \subset HH^{r}(|A|,|A|)$, this implies that $HH^{r}(A,A[s]) = 0$ for all $s$ and for all $r\geq 2$. 

It is known that 
$A$ is intrinsically formal if $HH^{r}(A,A[2-r]) = 0$ for all $r\geq 3$, see \cite{KadeishviliFormality}*{Corollary 4}, \cite{SeidelHMSquartic}*{Section 3} and \cite{SeidelThomas}*{Theorem 4.7}. We can thus conclude that the $\Z$-graded algebra $A$ is intrinsically formal.  
The vanishing for $r=2$ means that it is also not possible to deform the product structure on $A$. 

The previous argument can be adapted to show that, if we collapse the $\Z$-grading of $A$ to a $\Z/2\Z$-grading, $A$ is still intrinsically formal. More specifically, let $CC^{r}(A,A[\even]) \subset CC^{r}(|A|,|A|)$ be the subcomplex of homomorphisms with no odd degree components, and let $CC^{r}(A,A[\odd]) \subset CC^{r}(|A|,|A|)$ be the subcomplex of homomorphisms with no even degree components. We get a decomposition
$$
HH^{r}(|A|,|A|) \cong HH^{r}(A,A[\even]) \oplus HH^{r}(A,A[\odd]). 
$$
In the $\Z/2\Z$-graded case, intrinsic formality of $A$ follows from the simultaneous vanishing
$$
\begin{cases}
HH^r(A,A[\even]) = 0 \text{, for all } r\geq 3 \text{ even} \\
HH^r(A,A[\odd]) = 0 \text{, for all } r\geq 3 \text{ odd}  
\end{cases}
$$
which is again a consequence of the fact that $HH^r(|A|,|A|) = 0$ if $r \notin \{ 0,1\}$.

We can conclude the following. 

\begin{proposition} \label{prop:A intrinsically formal}
$A=\K[u]$ is intrinsically formal as a $\Z$-graded algebra and as a $\Z/2\Z$-graded algebra. 
\end{proposition}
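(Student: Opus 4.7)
The plan is to reduce intrinsic formality to a Hochschild cohomology vanishing statement, and then to obtain that vanishing from the very short Koszul resolution of $|A|$ over $|A|^e$, essentially putting together the pieces already assembled in the discussion preceding the proposition.

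First, I would invoke the criterion from \cite{SeidelThomas}*{Theorem 4.7}: a graded associative algebra $A$ is intrinsically formal provided that $HH^r(A, A[2-r]) = 0$ for all $r \geq 3$. So the task is to establish these vanishing results for $A = \K[u]$ (with $\deg u = 1-n$), in both the $\Z$-graded and $\Z/2\Z$-graded settings.

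Next, I would compute ungraded Hochschild cohomology using the Koszul resolution \eqref{Koszul res}: since $|A|$ is a polynomial ring in one variable, it admits a length-$1$ projective resolution as a bimodule, and hence $HH^r(|A|,|A|) \cong \Ext^r_{|A|^e}(|A|,|A|) = 0$ for all $r \notin \{0,1\}$. The key observation now is that, for every internal degree $s$, the subcomplex $CC^r(A, A[s]) \subset CC^r(|A|,|A|)$ splits off as a direct summand (the Hochschild differential preserves internal degree), so the induced map on cohomology is injective: $HH^r(A,A[s]) \subset HH^r(|A|,|A|)$. In particular $HH^r(A,A[s]) = 0$ for all $s \in \Z$ whenever $r \geq 2$, which covers the $r \geq 3$ range needed for the criterion and gives the $\Z$-graded statement.

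For the $\Z/2\Z$-graded case, I would use the decomposition
$$
CC^r(|A|,|A|) = CC^r(A, A[\even]) \oplus CC^r(A, A[\odd])
$$
already written in the excerpt, which is again a decomposition of chain complexes because the Hochschild differential has even internal degree. Taking cohomology gives $HH^r(A, A[\even]) \oplus HH^r(A, A[\odd]) \subset HH^r(|A|, |A|)$, and the right-hand side vanishes for $r \geq 2$. This yields the $\Z/2\Z$-graded vanishing and hence intrinsic formality by the same criterion (adapted to the $\Z/2\Z$-graded setting, where the condition becomes $HH^r(A, A[\even])=0$ for $r\geq 3$ even, and $HH^r(A,A[\odd])=0$ for $r\geq 3$ odd).

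The only step with any real content is the Koszul resolution computation, and that is straightforward for $\K[u]$; everything else is bookkeeping about how the internal grading interacts with the Hochschild complex. I expect no substantive obstacle.
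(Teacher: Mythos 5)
Your proposal is correct and takes essentially the same route as the paper: compute $HH^r(|A|,|A|)=0$ for $r\geq 2$ via the two-step Koszul resolution of $|A|$ over $|A|^e$, transfer this to the ($\Z$- or $\Z/2\Z$-)graded setting via the direct summand subcomplex $CC^{r}(A,A[s])\subset CC^r(|A|,|A|)$ (respectively the even/odd decomposition), and then invoke the Seidel--Thomas criterion for intrinsic formality. The only difference is expository ordering (you state the criterion first, the paper builds up to it); the content is identical.
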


We now discuss right modules over the graded algebra $A$. 
In a manner similar to the previous discussion, let $|A|$, $|M|$ and $|N|$ be the result of forgetting the $\Z$-gradings of the algebra $A$ and of right $A$-modules $M$ and $N$. Then, $\Hom_\K(|M|,|N|)$ is an $A$-bimodule and its Hochschild cochain complex is, for $r\geq 0$,
\begin{align*}
CC^r(|A|,\Hom_\K(|M|,|N|)) &:= \Hom_\K(|A|^{\otimes r},\Hom_\K(|M|,|N|)) \\
& \cong \Hom_\K(|M|\otimes_\K |A|^{\otimes r},|N|). 
\end{align*}

Remembering the $\Z$-gradings, we can denote as before the homomorphisms of degree $s\in \Z$ by
$$
CC^{r}(A,\Hom_\K(M,N)[s]) := \Hom^s_\K(A^{\otimes r},\Hom_\K(M,N)) \cong \Hom^s_\K(M\otimes_\K A^{\otimes r},N).
$$ 
The Hochschild differential preserves $s$ and we get inclusions on cohomology
$$
HH^{r}(A,\Hom_\K(M,N)[s]) \subset HH^r(|A|,\Hom_\K(|M|,|N|)). 
$$
Using again \cite{WeibelHA}*{Corollary 9.1.5}, we get that 
$HH^*(|A|,\Hom_\K(|M|,|N|)) \cong \Ext^*_{|A|^e}(|A|,\Hom_\K(|M|,|N|))$. The 2-step Koszul resolution \eqref{Koszul res} can be used to show that $HH^r(|A|,\Hom_\K(|M|,|N|))=0$ for $r\geq 2$ and for every $M,N$. Consequently, we get $HH^{r}(A,\Hom_\K(M,N)[s])= 0$ for $r\geq 2$ and for all $s\in \Z$. 

\begin{remark}
It is worth pointing out that $HH^*(|A|,\Hom_\K(|M|,|N|))$ is also isomorphic to $\Ext^*_{|A|/\K}(|M|,|N|)$, the {\em relative} $\Ext$, see \cite{WeibelHA}*{Lemma 9.1.9}.
\end{remark}

Say that a $\Z$-graded right $A$-module $M$ is {\em intrinsically formal} if, for every $\Z$-graded right $A_\infty$-module $\M$ over $A$ such that the $A$-module $H^*\M$ is isomorphic to $M$, we have that $\M$ is quasi-isomorphic to $M$ (as $A_\infty$-modules over $A$). In an analogous manner to the Hochschild cohomology criterion for intrinsic formality of graded algebras discussed earlier, it can be shown that if 
$$HH^{r}(A,\Hom_\K(M,M)[1-r]) = 0$$ 
for all $r\geq 2$, then $M$ is intrinsically formal, see \cite{LadoshkinModules}*{Theorem 3.2}. What we saw above implies that every $\Z$-graded right $A$-module is intrinsically formal. 

The same argument could again be adapted to the case of $\Z/2\Z$-graded modules over $A$ (with the grading of $A$ collapsed to $\Z/2\Z$). If $M$ and $N$ are $\Z/2\Z$-graded right $A$-modules, we can define cohomology groups $HH^{r}(A,\Hom_\K(M,N)[s])$ with $s\in \{0,1\}$. This time, we have a decomposition
$$
HH^r(|A|,\Hom_\K(|M|,|N|)) \cong HH^{r}(A,\Hom_\K(M,N)) \oplus HH^{r}(A,\Hom_\K(M,N)[1]).
$$
The sufficient condition for intrinsic formality of $M$ is now given by the simultaneous vanishing
$$
\begin{cases}
HH^r(A,\Hom_\K(M,N)[1]) = 0 \text{, for all } r\geq 2 \text{ even}  \\
HH^r(A,\Hom_\K(M,N)) = 0 \text{, for all } r\geq 2 \text{ odd}
\end{cases}
$$
and this criterion is again met by the discussion above. We can conclude the following.  

\begin{proposition} \label{modules formal}
All $\Z$-graded and all $\Z/2\Z$-graded right $A$-modules are intrinsically formal. 
\end{proposition}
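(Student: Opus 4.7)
The plan is to apply a module-theoretic analogue of the intrinsic-formality criterion for algebras from \cite{SeidelThomas}*{Theorem 4.7}, combined with the Koszul-resolution computation already carried out in the excerpt. Given a $\Z$-graded right $A_\infty$-module $\mathcal{M}$ with $H^*\mathcal{M} \cong M$ as ordinary $A$-modules, I would construct an $A_\infty$ quasi-isomorphism $M \to \mathcal{M}$ (viewing $M$ as an $A_\infty$-module with only $\mu^2$ nonzero) by producing its components $f^k$ inductively on $k$. Once $f^1, \ldots, f^{r-1}$ have been chosen, the failure of the $A_\infty$-module morphism equations in arity $r$ determines a cocycle in $CC^r(A, \Hom_\K(M,M)[1-r])$, and extending the morphism to arity $r$ is possible precisely when this cocycle is a coboundary. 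Consequently, the vanishing $HH^r(A, \Hom_\K(M,M)[1-r]) = 0$ for all $r \geq 2$ is sufficient to guarantee intrinsic formality of $M$.

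For the vanishing itself, I would invoke the Koszul resolution \eqref{Koszul res} of $|A|$. Since this resolution has length one, $\Ext_{|A|^e}^r(|A|, \Hom_\K(|M|,|M|))$ vanishes for $r \geq 2$, so $HH^r(|A|, \Hom_\K(|M|,|M|)) = 0$ for $r \geq 2$. As noted in the excerpt, the Hochschild differential preserves the internal degree, so each graded piece $HH^r(A, \Hom_\K(M,M)[s])$ injects into $HH^r(|A|, \Hom_\K(|M|,|M|))$ and hence vanishes for $r \geq 2$ and every $s \in \Z$. Specializing to $s = 1-r$ completes the $\Z$-graded case.

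For the $\Z/2\Z$-graded case, the analogous criterion involves the parity-graded Hochschild groups $HH^r(A, \Hom_\K(M,M)[\even])$ and $HH^r(A, \Hom_\K(M,M)[\odd])$ introduced in the excerpt, and the same Koszul argument gives the required vanishing for each parity class and all $r \geq 2$.

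The main obstacle is the careful formulation of the obstruction-theoretic criterion for $A_\infty$-module morphisms, since \cite{SeidelThomas}*{Theorem 4.7} states it only for algebras. The module version has to be assembled from the standard Kadeishvili-style induction using the Hochschild complex $CC^*(A, \Hom_\K(M,M))$ in place of $CC^*(A,A)$; this is routine homological algebra but requires some attention to signs and degree conventions, especially when transitioning between the $\Z$-graded and $\Z/2\Z$-graded settings.
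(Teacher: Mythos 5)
Your proposal follows essentially the same route as the paper: reduce intrinsic formality of the module $M$ to the vanishing of $HH^r(A,\Hom_\K(M,M)[1-r])$ for $r\geq 2$, then obtain that vanishing from the two-step Koszul resolution of $|A|$ over $|A|^e$ together with the observation that each internal-degree summand of the Hochschild complex injects into the ungraded one (and likewise for the two parity classes in the $\Z/2\Z$-graded setting). The only difference is that you spell out the Kadeishvili-style obstruction argument underlying the criterion, which the paper merely asserts ``can be shown''; the substance is the same.
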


As in Section \ref{SS:Yoneda}, denote by $\mmod(A)$ a category whose objects are right $A$-modules (we do not mean $A_\infty$-modules). These modules are $\Z$- or $\Z/2\Z$-graded, depending on the context. Given two modules $M, N$, define their morphism space to be 
$$\hom^*_{\mmod(A)}(M,N) = \Ext^*_A(M,N)$$
(instead of usual $A$-module homomorphisms). The following is a consequence of the results of this section. To make the notation more uniform, we denote the $A_\infty$-algebra $\mathcal A_\mathbb K=CW^*(F,F;\mathbb K)$ by $\mathcal A$.

\begin{corollary} \label{C:mod(A) is formal}
Passing to cohomology gives a functor  
\begin{align*}
H:\mmod^{A_\infty}(\A) &\to \mmod(A) \\
\M & \mapsto H^*(\M)
\end{align*}
which is a quasi-equivalence (meaning that it induces an equivalence of categories on cohomology). 
The category $\mmod(A)$ is equivalent to the cohomology category of $\mmod^{A_\infty}(\A)$.
\end{corollary}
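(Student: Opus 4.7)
My plan is to use the two intrinsic formality results just established, Propositions \ref{prop:A intrinsically formal} and \ref{modules formal}, to reduce the statement to an elementary homological calculation with ordinary $A$-modules. Proposition \ref{prop:A intrinsically formal} provides a quasi-isomorphism of $A_\infty$-algebras between $\A$ and its cohomology $A = \K[u]$ (viewed as a formal $A_\infty$-algebra with $\mu^k = 0$ for $k \geq 3$). Pull-back along such a quasi-isomorphism induces a quasi-equivalence $\mmod^{A_\infty}(\A) \simeq \mmod^{A_\infty}(A)$ that intertwines the cohomology functor $H$, so I would replace $\A$ with $A$ from the outset.

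Once we are working over $A$, essential surjectivity of $H$ is immediate: any right $A$-module $M$ can be regarded as an $A_\infty$-module over $A$ with vanishing higher module operations, and then $H^* M = M$. For morphism spaces, given $A_\infty$-modules $\M$ and $\M'$ over $A$, I set $M := H^* \M$ and $M' := H^* \M'$ and apply Proposition \ref{modules formal} to each. This produces quasi-isomorphisms $\M \simeq M$ and $\M' \simeq M'$ of $A_\infty$-modules (with $M, M'$ now regarded as formal), and hence
$$
H^* \hom_{\mmod^{A_\infty}(A)}(\M, \M') \;\cong\; H^* \hom_{\mmod^{A_\infty}(A)}(M, M').
$$

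The final step is to identify this last group with $\Ext_A^*(M, M')$. For formal $A_\infty$-modules $M, M'$ over $A$, the $A_\infty$-morphism complex is the bar-type complex $\Hom^*_\K(M \otimes_\K A^{\otimes \bullet}, M')$ equipped with the Hochschild differential; since $\K$ is a field, the bar resolution is a free resolution of $M$ over $A$, so the cohomology of this complex is precisely $\Ext_A^*(M, M')$. Combined with the previous step, this yields the required isomorphism on cohomology and completes the quasi-equivalence. The same argument applies verbatim in the $\Z/2\Z$-graded setting, since both Propositions \ref{prop:A intrinsically formal} and \ref{modules formal} were proved in both gradings.

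The main obstacle I anticipate is ensuring that both formality results are invoked functorially: Proposition \ref{prop:A intrinsically formal} must be upgraded from an object-level statement about $\A$ to an actual quasi-equivalence of module categories, and the quasi-isomorphisms produced by Proposition \ref{modules formal} must be compatible enough to induce an isomorphism on morphism complexes. Both are standard consequences of homotopy transfer for $A_\infty$-structures, but they are the points where some care is required.
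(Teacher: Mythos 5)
Your proposal is correct and follows essentially the same route as the paper: both reduce from $\A$ to $A$ via the algebra-level formality (Proposition \ref{prop:A intrinsically formal}), then use the module-level formality (Proposition \ref{modules formal}) together with the bar-complex identification of morphism spaces with $\Ext_A^*$ to conclude $H$ is a quasi-equivalence. The paper phrases this more compactly as a composition of quasi-equivalences $\mmod(A) \to \mmod^{A_\infty}(A) \to \mmod^{A_\infty}(\A)$ with $H$ as quasi-inverse, citing Seidel's book for the $\Ext$ identification that you work out directly.
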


\begin{proof}
The fact that morphisms on the cohomology category of $\mmod^{A_\infty}(\A)$ are given by $\Ext$ groups is explained in \cite{SeidelBook}*{Remark 2.15}. There is a composition of quasi-equivalences of dg-categories
$$
\mmod(A) \to \mmod^{A_\infty}(A) \to \mmod^{A_\infty}(\A).
$$
The fact that $\A$ is formal (by Proposition \ref{prop:A intrinsically formal}) implies that the functor on the right is a quasi-equivalence, see \cite{SeidelBook}*{Section 2f}.  
The functor on the left is given by inclusion (thinking of $\mmod(A)$ as a dg-category with trivial differentials), and it is a quasi-equivalence by Proposition \ref{modules formal}.  
The functor $H$ in the statement is a quasi-inverse for this composition.
\end{proof}

\section{Generation of categories of modules}

\label{S:Generation modules}

\begin{definition}
Let $\mmod_{pr}(A)$ be the subcategory of $\mmod(A)$, whose objets are finite dimensional right $A$-modules (the subscript stands for {\em proper}).
\end{definition}

The fact that $\C$ is algebraically closed of characteristic zero implies that $\K$ is also algebraically closed, see \cite{FOOOCompactToricI}*{Appendix A}.%
\footnote{In characteristic $p>0$, the polynomial $x^p - x - T^{-1}$ does not have roots in the Novikov field. See \cite{Kedlaya} for a discussion of the algebraic closure of the power series field in positive characteristic.}
This will enable us to study the category $\mmod_{pr}(A)$ using Jordan normal forms. 

\begin{remark}
One should be able to work over the Novikov field $\mathbb K_R$ for a general commutative ring $R$, by allowing as objects monotone Lagrangians equipped with higher rank local systems. In that setting, one would expect to be able to prove analogues of Corollaries \ref{generate Fuk Sn} and \ref{generate Fuk}, showing that every compact Lagrangian object is split-generated by objects supported on $S^n$ and $(S^1\times S^{n-1})_\tau$ (with the latter being replaceable by $T^3_\tau$, if $n=3$), without appealing to the Jordan normal form.  
\end{remark}

Recall that $A = \K[u]$, where $\deg(u) = 1-n$. Since the monotone Fukaya category is $\Z/2\Z$-graded, we will consider two cases, depending on the parity of $n$. 

\subsection{When $n$ is odd} \label{sec: n odd}
Take an object $M \oplus N$ of $\mmod_{pr}(A)$, where $M$ is in degree 0 and $N$ is in degree 1.
Since $\K$ is algebraically closed, if $M\neq 0$ then it has a splitting 
$$
M \cong \bigoplus_{i = 1}^m M_{\alpha_i}^{k_i}
$$
where $\alpha_i\in \K$, $k_i\in \Z_+$ and $M_{\alpha}^{k}$ is the vector space $\K^k$ with a right action of $u$ by the $k\times k$ transposed Jordan block 
\begin{equation} \label{def:J^T}
(J_\alpha^k)^T = \begin{pmatrix}
 \alpha  \\
  1 & \alpha  \\
  & \ddots & \ddots \\
  & & 1 & \alpha  \\
  & & & 1 & \alpha
\end{pmatrix}
\end{equation}
If the module $N$ is non-trivial, then it also has a splitting 
$$
N \cong \bigoplus_{j = 1}^n M_{\beta_j}^{l_j}[1]
$$
for certain $\beta_j\in \K$ and $l_j\in \Z_+$.

Denote the 1-dimensional module $M_\alpha^1$ by $S_\alpha$. The following result was used above in study of the Lagrangian tori $T^3_\tau\subset T^*S^3$. For the calculation with $S_\alpha$, one can use the resolution
$$
\mathbb K[u] \stackrel{u-\alpha}{\to} \mathbb K[u] \to \mathbb K,
$$
and a similar argument works for $M^2_\alpha$. 

\begin{lemma} \label{lemma: ExtSaSa}
Given $\alpha, \alpha'\in \K$, we have 
\begin{equation*} 
\Ext_{A_{\K}}^*(S_\alpha,S_{\alpha'}) \cong \begin{cases}
\K\oplus \K[1] & \text{ if } \alpha = \alpha' \\
0 & \text{ otherwise } 
\end{cases}
\end{equation*}
and 
\begin{equation*} 
\Ext_{A_{\K}}^*(M_\alpha^2,M_{\alpha'}^2) \cong \begin{cases}
\K^2\oplus \K^2[1] & \text{ if } \alpha = \alpha' \\
0 & \text{ otherwise } 
\end{cases}.
\end{equation*}
\end{lemma}

\begin{lemma} \label{L:triangulated closure}
For every $k\in \Z_+$, $M^k_\alpha$ is in the triangulated closure of $S_\alpha$. 
\end{lemma}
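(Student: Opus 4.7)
The plan is to prove this by induction on $k$, using the standard filtration of a Jordan block. The base case $k=1$ is trivial since $M^1_\alpha = S_\alpha$.

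For the inductive step, I will exhibit $M^k_\alpha$ as an extension of $S_\alpha$ by $M^{k-1}_\alpha$. Using the basis $e_1, \ldots, e_k$ of $M^k_\alpha$ on which the right action of $u$ is the transposed Jordan block (so that $e_i \cdot u = \alpha e_i + e_{i+1}$ for $i<k$ and $e_k \cdot u = \alpha e_k$), the subspace spanned by $e_2, \ldots, e_k$ is visibly closed under the $A$-action and, as an $A$-module, is isomorphic to $M^{k-1}_\alpha$. The quotient is 1-dimensional with $u$ acting by $\alpha$, hence is $S_\alpha$. This produces a short exact sequence of right $A$-modules
\begin{equation*}
0 \to M^{k-1}_\alpha \to M^k_\alpha \to S_\alpha \to 0.
\end{equation*}

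Next, I will invoke Corollary \ref{C:mod(A) is formal}, which says that $\mmod(A)$ is equivalent to the cohomology category of $\mmod^{A_\infty}(\A)$; in particular it is triangulated, and short exact sequences of $A$-modules give rise to exact triangles. The sequence above thus yields an exact triangle
\begin{equation*}
M^{k-1}_\alpha \to M^k_\alpha \to S_\alpha \to M^{k-1}_\alpha[1],
\end{equation*}
which, after rotation, presents $M^k_\alpha$ as the cone of a morphism $S_\alpha[-1] \to M^{k-1}_\alpha$. By the inductive hypothesis, $M^{k-1}_\alpha$ lies in the triangulated closure of $S_\alpha$; since triangulated subcategories are closed under shifts and cones, $M^k_\alpha$ lies there as well, completing the induction.

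There is no substantive obstacle here; the only thing to verify is the routine claim that the filtration by the $e_j$ really is a filtration by $A$-submodules with the claimed subquotient, which is immediate from the explicit Jordan block formula for the action of $u$.
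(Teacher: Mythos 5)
Your proof is correct and follows essentially the same route as the paper: both exhibit the short exact sequence $0 \to M^{k-1}_\alpha \to M^k_\alpha \to S_\alpha \to 0$ (the paper via the projection onto the last coordinate, whose cone it identifies with $M^{k-1}_\alpha[1]$), rotate the resulting distinguished triangle, and induct on $k$. The only superficial difference is an index convention for the Jordan block and that the paper constructs the cone explicitly as an $A_\infty$-module while you invoke the standard passage from short exact sequences to exact triangles.
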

\begin{proof}
Observe that there are $A$-module homomorphisms 
$$
\varphi_\alpha^k \colon M_\alpha^k \to S_\alpha
$$
obtained by projecting onto the last coordinate. We can think of an $A$-module homomorphism as a homomorphism of $A_\infty$-modules, and take its cone. 
Recall that $\Cone(\varphi_\alpha^k)$ is the right $A_\infty$-module over $A$ given by the chain complex
$$
(M_\alpha^{k}[1]\oplus S_\alpha, \mu^1 = \varphi_\alpha^k),
$$
with $\mu^2 = (\mu_{M_\alpha^{k}[1]}^2,\mu^2_{S_\alpha})$ and trivial higher $A_\infty$-maps, see \cite{SeidelBook}*{Section (3e)}. We have that $H^*\Cone(\varphi_\alpha^k) \cong M_\alpha^{k-1}[1]$ and so $\Cone(\varphi_\alpha^k)$ is quasi-isomorphic to $M_\alpha^{k-1}[1]$. 

We can now argue by induction on $k$ to prove the statement in the lemma. In detail, since there is a distinguished triangle 
 $$
 M^k_\alpha \to S_\alpha \to M^{k-1}_\alpha[1] \to M^k_\alpha [1],
 $$
 axiom TR2 for triangulated categories (see for instance 
\cite{WeibelHA}*{Definition 10.2.1}) 
implies that there is also a distinguished triangle
 $$
 S_\alpha \to M^{k-1}_\alpha[1] \to M^k_\alpha [1] \to  S_\alpha[1],
 $$
 and by induction on $k$ we get that $M^k_\alpha$ is in the triangulated closure of $S_\alpha$ for all $k\geq 1$.
\end{proof}

We can now conclude the following.

\begin{corollary} \label{S generate}
The category $\mmod_{pr}(A)$ is generated by the collection of modules $\{S_\alpha\}_{\alpha \in \K}$. 
\end{corollary}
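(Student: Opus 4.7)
The plan is to reduce the statement to Lemma \ref{L:triangulated closure} via Jordan normal form, using in an essential way that $\K$ is algebraically closed so that every finite dimensional $\K[u]$-module splits as a direct sum of Jordan blocks.

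First, I would take an arbitrary object $P$ in $\mmod_{pr}(A)$ and, using its $\Z/2\Z$-grading, write $P = M \oplus N[1]$ where $M$ (in degree $0$) and $N$ (in degree $1$) are finite dimensional $\K[u]$-modules. Applying Jordan decomposition in each degree yields
$$
P \cong \bigoplus_{i=1}^m M_{\alpha_i}^{k_i} \,\oplus\, \bigoplus_{j=1}^n M_{\beta_j}^{l_j}[1],
$$
as in the discussion preceding Lemma \ref{L:triangulated closure}.

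Next, I would invoke Lemma \ref{L:triangulated closure} twice: each $M_{\alpha_i}^{k_i}$ lies in the triangulated closure of $S_{\alpha_i}$, and each $M_{\beta_j}^{l_j}[1]$ lies in the triangulated closure of $S_{\beta_j}[1]$, which in turn lies in the triangulated closure of $S_{\beta_j}$ since triangulated closures are automatically closed under the shift $[1]$. Combining these, every Jordan summand of $P$ (shifted or not) lies in the triangulated closure of $\{S_\gamma\}_{\gamma \in \K}$.

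Finally, I would note that the triangulated closure is closed under finite direct sums, since any $X \oplus Y$ fits into the split distinguished triangle $X \to X \oplus Y \to Y \to X[1]$. Assembling the summands in the Jordan decomposition then shows that $P$ itself belongs to the triangulated closure of $\{S_\alpha\}_{\alpha \in \K}$, completing the argument. There is no genuine obstacle here: the substantive work has already been done in establishing intrinsic formality, in identifying $A$ with $\K[u]$, and in Lemma \ref{L:triangulated closure}; the present corollary is a formal assembly of those ingredients with Jordan normal form.
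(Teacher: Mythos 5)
Your proposal is correct and follows the same route the paper takes: Jordan normal form in each $\Z/2\Z$-degree (set up in the discussion immediately preceding Lemma \ref{L:triangulated closure}), then Lemma \ref{L:triangulated closure} applied block by block, together with the standard closure of a triangulated subcategory under shifts and finite direct sums. The paper leaves exactly this assembly implicit, and you have simply spelled it out.
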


\subsection{When $n$ is even} 
This case is more subtle, because now $u$ acts on $A$-modules as an operator of odd degree. Take again an object $M\oplus N$ in $\mmod_{pr}(A)$, where $M$ is a finite dimensional $\K$-vector space in degree 0 and $N$ is finite dimensional in degree 1. We begin by defining some relevant examples. Given $\alpha \in \K$ and $k\geq 1$, let $\tilde M_{\alpha}^k$ be the right $A$-module consisting of the $\K$-vector space $\K^k\oplus \K^k[1]$, with $u$ acting on the right on row vectors by the block matrix 
$\begin{pmatrix}
  0 & I_k \\
  (J_\alpha^k)^T & 0 
\end{pmatrix}$, 
where $(J_\alpha^k)^T$ is as in \eqref{def:J^T} and $I_k$ is the identity. Denote by $\tilde S_{\alpha}$ the module $\tilde M_\alpha^1$. This consists of the $\K$-vector space $\K\oplus \K[1]$, with a right $u$-action by the matrix
$\begin{pmatrix}
  0 & 1 \\
  \alpha & 0 
\end{pmatrix}$. 

For $\alpha = 0$, we need to consider another type of right $A$-module, where the dimensions of the even and odd summands are different. Given $k\geq 1$, let $\tilde N_{0}^k$ be the right $A$-module consisting of the $\K$-vector space $\K^k\oplus \K^{k-1}[1]$, on which $u$ acts by the following $(2k-1)\times(2k-1)$-matrix, where all the empty blocks are understood to be filled with zeros:  
$$
\left(\begin{array}{@{}c|c|c@{}}
 & & 
\\
\hline
  \begin{matrix}
   &  \\
   & 
  \end{matrix}
  & & I_k \\
\hline
  I_k & &
  \begin{matrix}
   &  \\
   & 
  \end{matrix}
\end{array}\right).
$$
Denote by $S_0$ the module $\tilde N_0^1$. This is the $\K$-vector space $\K$, on which $u$ acts as multiplication by zero. Note that in the previous section (when $n$ was assumed 
odd), $S_0$ was also defined as a 1-dimensional module in even degree with trivial $u$-action.

Let us now go back to the general case of a right $A$-module $M\oplus N$, where $M$ and $N$ are finite dimensional, $M$ is in degree 0 and $N$ is in degree 1. Since we will be interested in split-generation of $\mmod_{pr}(A)$, we can assume that $\dim_\K M = \dim_\K N$, by taking a direct sum of $M$ or $N$ with a $\K$-vector space with trivial $u$-action, if necessary. Since $u$ has odd degree, by picking bases for $M$ and $N$, the $u$-action is given by a matrix of the form 
$\begin{pmatrix}
  0 & R \\
  S & 0 
\end{pmatrix},
$ where $R$ and $S$ are square matrices. Since $u^2$ has even degree, it acts by endomorphisms of both $M$ and $N$. As we saw in the case of $n$ odd, we can pick bases for $M$ and $N$ so that the right action of $u^2$ is represented by the transpose of a Jordan matrix. This means that we can assume that $RS$ and $SR$ consist of finitely many transposed Jordan blocks along the diagonal. 

\begin{lemma}
A suitable choice of bases for $M$ and $N$ induces and isomorphism between $M\oplus N$ and a direct sum of modules of the form $\tilde M_\alpha^k$, $\tilde M_0^k[1]$, $\tilde N_0^k$ or $\tilde N_0^k[1]$. 
\end{lemma}
\begin{proof}
If $\alpha\in \K$ is a non-zero eigenvalue of the action of $u^2$ on $M$, and if $v_1,\ldots,v_k$ is a Jordan basis for a Jordan block associated to this $u^2$-action, then $\Span(v_1,\ldots,v_k,v_1 \cdot u,\ldots,v_k \cdot u)$ is a $u$-invariant subspace of $M\oplus N$, and it is isomorphic to $\tilde M_\alpha^k$ as a right $A$-module. The Jordan blocks associated to $\alpha = 0$ are not invertible matrices, which is why we need to also allow summands of the form $\tilde M_0^k[1]$, $\tilde N_0^k$ or $\tilde N_0^k[1]$. 
\end{proof}

\begin{lemma} \label{L:triangulated closure graded}
For every $k\in \Z_+$ and every $\alpha\in \mathbb K$, $\tilde M^k_\alpha$ is in the triangulated closure of $\tilde S_\alpha$. Also, $\tilde M^k_0$ and $\tilde N^k_0$ are in the triangulated closure of $S_0$.
\end{lemma}
\begin{proof}
The fact that the $\tilde M^k_\alpha$ can be generated by $\tilde S_\alpha$ follows inductively from the short exact sequences
$$
0 \to \tilde M^{k-1}_\alpha \to \tilde M^k_\alpha \to \tilde S_\alpha \to 0,
$$
in a manner analogous to the proof of Lemma \ref{L:triangulated closure}. The short exact sequences
$$
0 \to \tilde N^{k-1}_0 \to \tilde N^k_0\to \tilde S_0 \to 0
$$
can be used to show that the $\tilde N^k_0$ can be generated by $S_0$ and $\tilde S_0$. Finally, the short exact sequence 
$$
0 \to S_0 \to \tilde S_0 \to S_0[1] \to 0
$$
implies that $\tilde S_0$ is in the triangulated closure of $S_0$.
\end{proof}

We can now conclude the following. 

\begin{corollary}  \label{S tilde generate}
The category $\mmod_{pr}(A)$ is split-generated by the collection of modules $\{S_0\}\cup\{\tilde S_{\alpha}\}_{\alpha\in \K\setminus 0}$. 
\end{corollary}

\bibliographystyle{alpha}
\bibliography{biblio}

\end{document}